\documentclass[a4paper,reqno,11pt]{amsart}
\usepackage{amsmath,amssymb,amscd,mathrsfs,amscd,wasysym,color,xcolor}
\usepackage{verbatim}
\usepackage[linktocpage=true]{hyperref}
\usepackage{tikz-cd}

\newdimen\hoogte    \hoogte=12pt    
\newdimen\breedte   \breedte=14pt   
\newdimen\dikte     \dikte=0.5pt    

\newenvironment{Young}{\begingroup
       \def\vr{\vrule height0.8\hoogte width\dikte depth 0.2\hoogte}
       \def\fbox##1{\vbox{\offinterlineskip
                    \hrule height\dikte
                    \hbox to \breedte{\vr\hfill##1\hfill\vr}
                    \hrule height\dikte}}
       \vbox\bgroup \offinterlineskip \tabskip=-\dikte \lineskip=-\dikte
            \halign\bgroup &\fbox{##\unskip}\unskip  \crcr }
       {\egroup\egroup\endgroup}
\def\diagram#1{\relax\ifmmode\vcenter{\,\begin{Young}#1\end{Young}\,}\else%
              $\vcenter{\,\begin{Young}#1\end{Young}\,}$\fi}

\theoremstyle{plain}
\newtheorem{thm}{Theorem}[section]
\newtheorem{lem}[thm]{Lemma}

\newtheorem{prp}[thm]{Proposition}
\newtheorem{dfn}[thm]{Definition}
\newtheorem{cor}[thm]{Corollary}

\newtheorem{exa}[thm]{Example}

	\newtheorem{alphatheorem}{Theorem}

\theoremstyle{remark}
\newtheorem{rmk}[thm]{Remark}

\numberwithin{equation}{section}

\def\genE{{\mathsf{E}}}
\def\genF{{\mathsf{F}}}
\def\genK{{\mathsf{K}}}

\newcommand{\al}{\alpha}

\newcommand{\ep}{\varepsilon}

\newcommand{\Z}{{\mathbb{Z}}}

\newcommand{\C}{{\mathbb{C}}}

\def\ol#1{\overline{#1}}

\newcommand{\Ui}{{\mathbf U}^\imath}

\newcommand{\Id}{\operatorname{Id}\nolimits}

\newcommand{\zero}{{\bar{0}}}
\newcommand{\one}{{\bar{1}}}

\newcommand{\iso}{{\mathsf{iso}}}
\newcommand{\niso}{{\mathsf{n}\text{-}\mathsf{iso}}}

\newcommand{\yy}{\mbox{\RIGHTcircle}}

\newcommand{\ad}{{\mathrm{ad}}}

\newcommand{\Ad}{{\mathrm{Ad}}}

\newcommand{\QED}{\rule{0.4em}{2ex}}

\renewcommand{\bar}[1]{\overline{#1}}

\everymath=\expandafter{\the\everymath\displaystyle} 

\allowdisplaybreaks[1]

\input xy
\xyoption{all}

\begin{document}

\title[ Quantum supersymmetric pairs and the Serre relations via $\mathrm i$Hopf algebras]{ Quantum supersymmetric pairs and the Serre relations via $\mathrm i$Hopf algebras }



\begin{abstract} 

We study iHopf algebras 
associated with quantum supergroups of basic type. For a quantum supersymmetric pair $(\mathbf{U}, \mathbf{U}^\imath)$, we realize $\mathbf{U}^\imath$ and $\mathbf{U}$  as the iHopf algebras of the Borel quantum group ${\hat{\mathbf{U}}}_q^{\geq0}$ and the tensor product algebra ${\hat{\mathbf{U}}}_q^{\geq0}\otimes {\hat{\mathbf{U}}}_q^{\geq0}$, respectively, while the coideal subalgebra structure is encoded by an embedding between the iHopf algebras. Moreover, we derive an explicit conversion formula between the iHopf multiplication and the original multiplication, 
{which provides a general mechanism transforming the defining relations of quantum (super)groups into relations of iquantum (super)groups.}
In particular, all the Serre relations that occur in quasi-split iquantum supergroups of basic type are characterized.
\end{abstract}

\subjclass[2020]{17B37, 17A70, 20G42}
\keywords{Quantum  supersymmetric pairs, iHopf algebra, Quantum supergroups, Quantum Serre relations}

\author[Jiayi Chen]{Jiayi Chen}
\address{Department of Mathematics, Shantou University, Shantou 515063, P.R.China}
\email{chenjiayi@stu.edu.cn}

\author[Shiquan Ruan]{Shiquan Ruan}
\address{ School of Mathematical Sciences,
Xiamen University, Xiamen 361005, P.R.China}
\email{sqruan@xmu.edu.cn}

\author[Hongying Zhu]{Hongying Zhu}
\address{ School of Mathematical Sciences,
Xiamen University, Xiamen 361005, P.R.China}
\email{hongyingz@stu.xmu.edu.cn}

\dedicatory{Dedicated to Professor Yanan Lin on the occasion of his seventieth birthday}

\maketitle

\setcounter{tocdepth}{1}  \tableofcontents

\section{Introduction }
\subsection{Background}{Quantum groups} originated in the field of physics, with Faddeev playing a pivotal role in their development for constructing and solving `integrable' quantum systems. 
The theory of quantum groups, introduced by Drinfeld \cite{Dri85} and Jimbo \cite{Jim85} in the 1980s, provides a deformation of universal enveloping algebras of semisimple Lie algebras.
Their super counterparts, known as quantum superalgebras or quantum supergroups, arise naturally from Lie superalgebras. Lie superalgebras, classified by Kac \cite{Kac77} into classical types and Cartan types, describe supersymmetries in physics and have found applications in areas ranging from string theory to integrable systems; see \cite{JGZ90}. The quantization of universal  enveloping superalgebras leads to quantum supergroups, whose representation theory and structural properties have been extensively studied by Yamane \cite{Ya94,Ya99} and others \cite{CHW16,LWY22}.

{Quantum symmetric pairs} $(\mathbf{U},\mathbf{U}^{\imath})$   are quantum deformations of symmetric pairs associated with Satake diagrams.  Here $\mathbf{U}$  denotes the quantum group, and $\mathbf{U}^\imath$  is a coideal subalgebra of $\mathbf{U}$, commonly called an iquantum group. The theory of quantum symmetric pairs was systematically developed by Letzter in the finite type setting; see \cite{Le99,Le02} for historical remarks and references. The theory was subsequently extended to the Kac--Moody setting by Kolb \cite{Ko14}, thereby unifying a number of special cases previously considered in the literature.
{Over the past decade, several fundamental constructions for quantum groups
have been extended to iquantum groups. In particular,  Bao and Wang
developed a theory of icanonical bases, (quasi) $K$-matrices for quantum symmetric pairs and
established a new approach to Kazhdan--Lusztig theory of type \(B\); see
\cite{BW18a,BW18b}. The theory of (quasi) \(K\)-matrices was further
developed by Balagović and Kolb, who constructed a universal $K$-matrix for general quantum symmetric pair coideal subalgebra of $\mathbf{U}_q(\mathfrak{g})$ for $\mathfrak{g}$ of finite type; see \cite{BK19}. }


The theory of quantum symmetric pairs has since been extended to the super setting, giving rise to the notion of {quantum supersymmetric pairs}. In this direction, Kolb and Yakimov extended the construction of quantum symmetric pairs to Nichols algebras of diagonal type, which can be applied uniformly to quantum supergroups; see \cite{KY20}. Chung \cite{Ch19} studied symmetric pairs for quantum covering algebras, while Chen and Luo \cite{CL23} extended the theory of iSchur algebras to the super setting. 
More recently, Shen and Wang \cite{Sh25, SW25} developed a theory of quantum supersymmetric pairs associated with super Satake diagrams for arbitrary Dynkin diagrams of finite-dimensional basic Lie superalgebras. Following the terminology  from the theory of real Lie groups, a quantum (super)symmetric pair and its associated iquantum (super)group are called quasi-split if the corresponding Satake diagram has no black nodes.  They are called split if, in addition, the involution on the diagram is trivial.

Serre relations play a crucial role in the theory of quantum groups, as they yield concise presentations of the corresponding algebras. Naturally, one also investigates the Serre relations for iquantum groups and quantum supergroups.
 In finite type, Letzter established Serre-type presentations for general iquantum groups; see \cite{Le02,Le03}. In the Kac--Moody setting, however, early treatments, including \cite{Ko14}, left some of the Serre relations implicit. For  $|a_{i,j}| \leq 3$, subsequent work produced explicit relations among the generators  $B_i$ and $ B_j$; see  \cite{Le03,Ko14,BK15,BK19}. These formulas, however, become increasingly complicated as  $|a_{i,j}|$ grows. The general case was finally settled in \cite{CLW21}, which provided a uniform and explicit Serre presentation for quasi-split iquantum groups of arbitrary Kac--Moody type with general parameters. In the split case, a concise formulation requires the use of idivided powers in place of ordinary divided powers, and their treatment is substantially more involved.

Owing to their $\Z_2$-graded structure, quantum supergroups admit a richer family of Serre relations than ordinary quantum groups: the precise form of the relations depends on the parities of the simple roots. 
Yamane \cite{Ya94} established {Serre presentations for quantum supergroups} of basic type associated with arbitrary Dynkin diagrams. He subsequently described, in \cite{Ya99}, defining relations for the Chevalley generators of affine quantized universal enveloping superalgebras. 
A distinctive feature of the super setting is the existence of many different types of Serre relations involving 2, 3 or 4 Chevalley generators; see \cite{Ya94,Ya99,CHW16}. 
Extending this precision to the iquantum setting, a fundamental objective is to obtain a concrete presentation for $\mathbf{U}^\imath$. Such presentations are essential for constructing the relevant bar involution and for determining the parameter constraints on the generators; see \cite{BK15}.

The iHopf algebra, introduced in \cite{CLPRW25}, is an associative algebra associated with a Hopf algebra equipped with a Hopf pairing. In the setting of quantum groups, the iHopf algebra attached to a Borel subalgebra realizes quasi-split universal iquantum groups of Kac--Moody type; moreover, the iHopf algebra of diagonal type realizes the corresponding quantum groups and encodes the coideal subalgebra structures of quantum symmetric pairs. Since the iHopf algebra has the same underlying vector space as the original Hopf algebra but has an adjusted multiplication structure, various structures on the Hopf algebra can often be transferred to the iHopf setting. For example, Serre relations, braid group actions, and canonical bases on quantum groups admit corresponding analogues on iquantum groups; see \cite{CLPRW25, CLPRW25}. Quantum supergroups are naturally Hopf superalgebras. After adjoining suitable parity elements, they may be regarded as ordinary Hopf algebras. This allows us to apply the iHopf-algebra construction to quantum supergroups and thereby study iquantum supergroups and quantum supersymmetric pairs.

\subsection{Goal} In this paper, we study the iHopf algebras associated with quantum supergroups of basic type and provide the conversion formulas between the original multiplication and the iHopf multiplication.
We prove that the quantum supergroup is isomorphic to the iHopf algebra of diagonal type, whereas the quasi-split iquantum supergroup is isomorphic to the iHopf algebra of the Borel subalgebra. Moreover, an embedding between these two iHopf algebras encodes the coideal subalgebra structure, thereby yielding a realization of quantum supersymmetric pairs.

Consequently, by converting the multiplication in the Serre relations of quantum supergroups into the iHopf multiplication via the established formulas, we derive the Serre relations for quasi-split iquantum supergroups of basic type. In particular, when all generators are even, this covers the Serre relations for iquantum groups.


\subsection{Main results} Let {$\C$} be the field of complex numbers, and  $\C(q)$ be the field of  rational functions in one variable $q$ over  {$\C$}. In the super setting, $\mathbb{I}=\mathbb{I}_\zero\cup\mathbb{I}_\one$ consists of even and odd simple roots. 
Let ${\hat{\mathbf{U}}}_q^{\geq0}$ be the Borel quantum supergroup generated by $\genE_i,\genK_i,\varrho$ ($i\in \mathbb{I}$), endowed with a non-degenerate Hopf pairing $\langle-,-\rangle$. Denote by $p(\cdot)$ the parity function.
Following \cite{CLPRW25}, for a quasi-split super Satake diagram $(\mathbb{I},\tau)$, we define the iHopf algebra $({\hat{\mathbf{U}}}_q^{\geq0})_\tau^{ \imath}$, which shares the same vector space as ${\hat{\mathbf{U}}}_q^{\geq0}$, equipped with the new multiplication 
\begin{align*}
   a\circ b=\sum  \langle\tau(b_{(2)}),a_{(1)}\rangle a_{(2)}b_{(1)}.
\end{align*}
Under the new multiplication, we establish basic relations for the iHopf algebra  $({\hat{\mathbf{U}}}_q^{\geq0})_\tau^{ \imath}$; see Proposition \ref{KE}. Furthermore, to derive the Serre relations for $({\hat{\mathbf{U}}}_q^{\geq0})_\tau^{ \imath}$, we provide an explicit conversion formula for products of generators $\genE_i$ between the quantum supergroup ${\hat{\mathbf{U}}}_q^{\geq0}$ and the iHopf algebra $({\hat{\mathbf{U}}}_q^{\geq0})_\tau^{ \imath}$.

For $I=(i_1,\cdots,i_l)\in {\mathbb{I}}^l$, denote 
 $$E_I:=\genE_{i_1}\genE_{i_2}\cdots \genE_{i_l}=\prod\limits_{u=1}^l \genE_{i_u}, \quad E_I^{\circ}:=\genE_{i_1}\circ\genE_{i_2}\circ\cdots \circ\genE_{i_l}=\mathop{\circ\prod}\limits_{u=1}^l \genE_{i_u}.$$ Denote by $\mathcal{M}_{s}(S)$ the collection of all sets $M$ consisting of $s$  ordered pairs in $S=\{1,2,\cdots,l\}$ with suitable conditions as in \eqref{def:m}. 
 For $M\in \mathcal{M}_{s}(S)$, set $$T(M)=\{x,y\in S\mid (x,y)\in M\}.$$
 For each $(x,y)\in M$, we define functions $f(x,y)$, $g(x,y)$ and $\varphi(x,y)$ in \eqref{def1}-\eqref{def3}. Then we obtain the following conversion formula between the original multiplication and iHopf multiplication.

 \begin{alphatheorem}[Theorem \ref{formula}]\label{A}
     Suppose that $I=(i_1,\cdots,i_l)\in {\mathbb{I}}^l$. Then
     $$E_{I}=E^{\circ}_{I}+o(E^{\circ}_{I}),$$
     where $o(E^{\circ}_{I})$ is given by
     {{\begin{align*}
    \sum\limits_{s=1}^{\lfloor \frac{l}{2} \rfloor}\sum\limits_{M\in \mathcal{M}_s(S)} (-1)^s\Big(\mathop{\circ\prod}\limits_{(x, y)\in M}\varphi(x,y)\cdot \varrho^{p(i_{x})}\circ \genK_{i_{y}}\Big)\circ(\mathop{\circ\prod}\limits_{\substack{1\leq u\leq l\\u\notin T(M)}}\genE_{i_u}).
     \end{align*}}}
 \end{alphatheorem}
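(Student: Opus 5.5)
Proof proposal. I plan to argue by induction on $l$, the length of $I$. For $l=1$ there is nothing to prove, since $E_I=\genE_{i_1}=E_I^\circ$ and the sum defining $o(E^\circ_I)$ is empty.

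\emph{Step 1: a two-term formula.} The first step is an explicit formula for $\genE_i\circ x$ when $x$ is a homogeneous element of the positive part. Using $\Delta(\genE_i)=\genE_i\otimes 1+\varrho^{p(i)}\genK_i\otimes\genE_i$ (up to the conventions fixed earlier in the paper) and the definition $a\circ b=\sum\langle\tau(b_{(2)}),a_{(1)}\rangle a_{(2)}b_{(1)}$, only the components $a_{(1)}\in\{1,\genE_i\}$ survive the pairing. Pairing with $\genE_i$ selects the component of $\Delta(x)$ whose second factor has weight $\tau(\alpha_i)$. This expresses $\genE_i\circ x$ as $\genE_i x$ plus a correction of the form $\varrho^{p(i)}\genK_i\cdot(\text{skew-derivative of }x)$. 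Equivalently, $\genE_i x=\genE_i\circ x-\ldots$, where the correction is a sum over positions $y$ in $x=E_{(i_2,\dots,i_l)}$ with $i_y=\tau(i)$. Each such term carries the scalar $\varphi(1,y)$, which collects the pairing value, the $q$-powers $f$ coming from moving $\genK$'s past $\genE$'s, and the parity signs $g$, and in it the factor $\genE_{i_y}$ is deleted. I will assume Proposition~\ref{KE} to rewrite $\varrho^{p(i)}\genK$-factors as $\circ$-products. In particular I need that $\varrho^{p(i_x)}\circ\genK_{i_y}$ is central-like, or commutes with the $\genE$'s in $\circ$ up to scalars that are already absorbed in $f$. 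I also need that $\genK\cdot z=\genK\circ z$ up to scalar.

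\emph{Step 2: the inductive step.} Write $E_I=\genE_{i_1}E_{I'}$ with $I'=(i_2,\dots,i_l)$, and apply the induction hypothesis to $E_{I'}$. This expresses $E_{I'}$ as a signed sum over matchings $M'$ on $\{2,\dots,l\}$ of $\circ$-products of Cartan-type factors with the unmatched $\genE$'s. Then left-multiply by $\genE_{i_1}$ in the ordinary product and apply Step 1 to each term. Two things happen:
\begin{itemize}
\item either $\genE_{i_1}$ simply becomes $\genE_{i_1}\circ$, so position $1$ is unmatched;
\item or $\genE_{i_1}$ pairs with an unmatched position $y$ having $i_y=\tau(i_1)$. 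This contributes $-\varphi(1,y)\varrho^{p(i_1)}\circ\genK_{i_y}$ and removes $\genE_{i_y}$.
\end{itemize}
The Cartan factors must be commuted to the front of the $\circ$-product. Since $\genE_{i_1}$ sits to the left of them, Step 1 has to be applied to a product containing $\varrho\genK$'s. Their coproduct contributes only group-like terms, so they simply shift the pairing weight and produce $q$-power factors. These factors must match the definition of $f(x,y)$, which by the nesting/ordering conditions in \eqref{def:m} should depend on how many matched or unmatched indices lie between $x$ and $y$. The union over $M'$ plus the choice of $y$ (or no choice) then bijects onto $\mathcal{M}_s(S)$, and the sign $(-1)^s$ accumulates correctly.

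\emph{Expected obstacle.} The main obstacle is the bookkeeping of scalars. I must verify that the coefficient picked up when $\genE_{i_1}$ passes through the already-produced factors $\varrho^{p(i_x)}\circ\genK_{i_y}$ and the remaining $\genE_{i_u}$ equals exactly $\varphi(1,y)$, independently of $M'$. This independence is what makes the formula factor as $\prod\varphi(x,y)$. My first attempt will be a direct comparison: compute the bicharacter value $\langle\tau(\cdot),\cdot\rangle$ on $\genK$'s and the super-sign from swapping odd $\genE_{i_1}$ past the intermediate odd elements. I will check that only the positions $u$ with $1<u<y$ contribute. If extra contributions appear from matched pairs straddling $y$, I will switch to inducting by peeling off the last factor $\genE_{i_l}$ using the right-handed analogue of Step 1, whichever matches the conventions in \eqref{def1}--\eqref{def3}.
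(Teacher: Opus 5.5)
\textbf{There is a genuine gap in Step~2.} Your combinatorial skeleton is fine: position $1$ is either unmatched or paired with some $y$, and $M'\mapsto M'$, $(M',y)\mapsto M'\cup\{(1,y)\}$ is a bijection onto all partial matchings of $S$. The problem is that Step~2 uses Step~1 in an order where it is not valid. Step~1 computes $\genE_{i_1}\circ x$ from $\Delta(x)$, so it is available only when $x$ is an ordinary product of generators. $\Delta$ is not multiplicative for $\circ$, so you have no usable coproduct for the $\circ$-expressions $(\varrho^{p}\circ\genK)\circ E^\circ_{\cdots}$ that the induction hypothesis produces. Worse, the termwise dichotomy ``$\genE_{i_1}$ becomes $\genE_{i_1}\circ$, or pairs with a remaining $\genE_{i_y}$'' is false.

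Concretely, take all nodes even, $\tau=\mathrm{id}$, $i\neq j$ with $a_{j,i}\neq 0$, and $c=\langle\genE_j,\genE_j\rangle\neq0$. Example~\ref{EE} gives $\genE_j\circ\genE_j=\genE_j^2+c\,\genK_j$ and $\genE_i\genE_j\genE_j=\genE_i\circ\genE_j\circ\genE_j-c\,\genK_j\circ\genE_i$. Also $\genE_i\genK_j=q_j^{-a_{j,i}}\genK_j\circ\genE_i$. Hence
\[
\genE_i\cdot(\genE_j\circ\genE_j)=\genE_i\circ\genE_j\circ\genE_j+c\,(q_j^{-a_{j,i}}-1)\,\genK_j\circ\genE_i ,
\]
even though $\genE_i$ pairs with nothing. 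If you treat the Cartan term $-c\,\genE_i\genK_j$ by the group-like shift, as you propose, you get coefficient $-c\,q_j^{-a_{j,i}}$ for $\genK_j\circ\genE_i$ in $\genE_i\genE_j\genE_j$ instead of the correct $-c$.

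The remedy is to convert before expanding. Apply Step~1 to the ordinary product $E_{I'}$ itself, which gives
\[
E_I=\genE_{i_1}\circ E_{I'}-\sum_{y}c(1,y)\,(\varrho^{p(i_1)}\circ\genK_{i_y})\circ\prod_{u\neq 1,y}\genE_{i_u},
\]
with $c(1,y)=(-1)^{p(i_1)\sum_{1<u<y}p(i_u)}\,q_{i_y}^{-\sum_{1<u<y}a_{i_y,i_u}}\,\langle\tau(\genE_{i_y}),\genE_{i_1}\rangle$. Note the Cartan factor is $\genK_{i_y}$, indexed by the right endpoint, not $\genK_{i_1}$ as in your Step~1. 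Then:
\begin{itemize}
\item substitute the induction hypothesis for $E_{I'}$ inside $\genE_{i_1}\circ(-)$;
\item move $\genE_{i_1}$ past the Cartan factors using the $\circ$-commutation of Proposition~\ref{KE};
\item apply the induction hypothesis to the ordinary words $\prod_{u\neq1,y}\genE_{i_u}$ of length $l-2$.
\end{itemize}
This is the paper's mechanism mirrored. The paper peels off $\genE_{i_l}$, with $I'=(i_1,\dots,i_{l-1})$, and uses \eqref{f1}--\eqref{f2} to get $E_I=E_{I'}\circ\genE_{i_l}-\sum_j d(j,l)\,(\prod_{u\neq j}\genE_{i_u})\circ(\varrho^{p(i_j)}\circ\genK_{i_l})$. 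Only then does it expand by induction.

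\textbf{Your expected scalar identity also fails.} You expect the paired term to carry exactly $\varphi(1,y)$, independently of $M'$; this is false for the coefficients \eqref{def1}--\eqref{def3}. $\varphi(1,y)$ depends on $M$ through the pairs $(x',y')\in M$ with $1<x'<y<y'$, whereas $c(1,y)$ does not. Dually, the coefficients that the induction hypothesis on $S\setminus\{1,y\}$ attaches to a pair $(x',y')$ with $y'>y$ generally differ from $\varphi(x',y')$. Left peeling therefore needs a global compensation identity $c(1,y)\prod_{M''}\varphi''=\prod_{M}\varphi$, where $M''=M\setminus\{(1,y)\}$. This identity does hold, but it uses $d_ia_{i,j}=d_{\tau i}a_{\tau i,\tau j}$ and $p(\tau i)=p(i)$.

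The definition of $\varphi$ is tailored to right peeling: crossing corrections go only to the right, and the product over $u<x$ excludes left endpoints of pairs with larger $y$. With right peeling, $\varphi'=\varphi$ and $\varphi''=\varphi$ hold on the nose, and $\varphi(j,l)=d(j,l)\prod_{u<l,\,u\neq j}q_{i_u}^{a_{i_u,\tau i_l}-a_{i_u,i_l}}$. So your fallback of peeling off the last factor is the right choice, provided you also do the conversion on the ordinary product before invoking the induction hypothesis.
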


Given the  Hopf algebra structure on the quantum supergroup ${{\mathbf{U}}}$ in \eqref{defU}, the iHopf algebra $({\hat{\mathbf{U}}}_q^{\geq0}\otimes{\hat{\mathbf{U}}}_q^{\geq0})^{\imath}$ associated with the tensor product Hopf algebra ${\hat{\mathbf{U}}}_q^{\geq0}\otimes{\hat{\mathbf{U}}}_q^{\geq0}$ and a twisted Hopf pairing is isomorphic to ${{\mathbf{U}}}$.  

Under the assumption that $\tau i\neq i$ whenever $i\in \mathbb{I}_\iso$, we provide a realization of quantum supersymmetric pairs $({\mathbf{U}},{\mathbf{U}}^\imath)$ and the coideal subalgebra structure via iHopf algebras.

 \begin{alphatheorem}[Theorem \ref{Phi}, Proposition \ref{xi}, Theorem \ref{Phii}]
    There are algebra {isomorphisms} $\Phi: {{\mathbf{U}}}\to ({\hat{\mathbf{U}}}_q^{\geq0}\otimes{\hat{\mathbf{U}}}_q^{\geq0})^{\imath}$ and $\Phi^{\imath}_\tau:({\hat{\mathbf{U}}}_q^{\geq0})_\tau^{ \imath}\to\mathbf{U}^{\imath}$,
    and an algebra embedding $\xi^{\imath}_\tau: ({\hat{\mathbf{U}}}_q^{\geq0})^{ \imath}_\tau\to ({\hat{\mathbf{U}}}_q^{\geq0}\otimes{\hat{\mathbf{U}}}_q^{\geq0})^{\imath}$
      such that the following diagram commutes
\begin{equation*}
\begin{tikzcd}
({\hat{\mathbf{U}}}_q^{\geq0})_\tau^{ \imath} \arrow[r, "{\xi^{ \imath}_\tau}"] \arrow[d, "{\Phi}_\tau^{\imath}"] 
& ({\hat{\mathbf{U}}}_q^{\geq0} \otimes {\hat{\mathbf{U}}}_q^{\geq0})^{ \imath} \arrow[d, "{\Phi}^{-1}"] \\
\Ui \arrow[r, hookrightarrow] 
& {{\mathbf{U.}}}
\end{tikzcd} 
\end{equation*} 
 \end{alphatheorem}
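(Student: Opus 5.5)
The plan has three parts: construct $\Phi$ and $\Phi^\imath_\tau$ by generators and relations, build $\xi^\imath_\tau$ from the Hopf structure, and check commutativity of the square on generators.

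\emph{Step 1: the map $\Phi$.} I will follow the strategy of \cite{CLPRW25} for ordinary quantum groups and adapt it to the super setting by adjoining the parity element $\varrho$. First, $\Phi:\mathbf{U}\to(\hat{\mathbf{U}}_q^{\geq0}\otimes\hat{\mathbf{U}}_q^{\geq0})^\imath$ is defined on Chevalley generators by
\[
\genE_i\mapsto \genE_i\otimes 1,\qquad \genF_i\mapsto c_i\,(1\otimes \genE_i)\ \text{(suitably twisted by }\varrho^{p(i)}\text{ and }\genK_i\text{)},\qquad \genK_i\mapsto \genK_i\otimes1,\qquad \genK_i'\mapsto 1\otimes \genK_i,
\]
and $\varrho\mapsto\varrho\otimes\varrho$. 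Here $c_i$ is a normalising constant fixed by matching the iHopf commutator $[\genE_i\otimes1,1\otimes\genE_j]_\circ$ with $\delta_{ij}(\genK_i-\genK_i')/(q_i-q_i^{-1})$. This commutator is computed directly from $a\circ b=\sum\langle b_{(2)},a_{(1)}\rangle a_{(2)}b_{(1)}$ using the coproducts of $\genE_i$; only the terms pairing $\genE_i$ with $\genE_j$ survive, which produces $\delta_{ij}$.

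\emph{Step 2: relations, bijectivity, and $\Phi^\imath_\tau$.} The Cartan relations come from the group-like pairings. The Serre relations (all of Yamane's types, involving 2, 3 or 4 generators) hold because each tensor factor embeds as a subalgebra: on $\hat{\mathbf{U}}_q^{\geq0}\otimes1$ the iHopf product agrees with the original product, since the relevant pairing term is trivial. Bijectivity follows from a triangular decomposition and a PBW-type basis together with the fact that iHopf algebras share the underlying vector space. For $\Phi^\imath_\tau$, I will use Proposition \ref{KE} for the $\genK$–$\genE$ relations. For the iSerre relations, I will apply Theorem \ref{formula} to rewrite each quantum-supergroup Serre relation, which holds in $\hat{\mathbf{U}}_q^{\geq0}$, in terms of $\circ$-products, sending $\genE_i\mapsto B_i$ and $\genK_i\mapsto\tilde k_i$. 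Injectivity follows by a filtration argument whose associated graded is $\mathbf{U}^{+}\otimes(\text{Cartan})$.

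\emph{Step 3: the embedding $\xi^\imath_\tau$ and commutativity.} I will define $\xi^\imath_\tau$ as the linear map $(1\otimes\tau)\circ\Delta$, suitably twisted by $\varrho$ and $\genK$. I will then verify that it intertwines the multiplications, that is, $\xi(a\circ_\tau b)=\xi(a)\circ\xi(b)$, by expanding both sides with coassociativity and the Hopf-pairing axioms. This is a Sweedler-notation computation, as in \cite{CLPRW25}, with additional sign bookkeeping from $\varrho$. Injectivity follows from $(\varepsilon\otimes1)\circ\xi=\tau$. Commutativity of the square is checked on the generators $\genE_i$ and $\genK_i$. For example, $\xi(\genE_i)=\genE_i\otimes1+\genK_i\otimes\genE_{\tau i}$, and applying $\Phi^{-1}$ gives $\genE_i+(\text{unit})\genK_i'\genF_{\tau i}$, which is exactly the generator $B_i$ of $\mathbf{U}^\imath$.

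The main obstacle is the sign and parity bookkeeping in the homomorphism property of $\xi^\imath_\tau$ and in the matching of $\Phi^\imath_\tau$ with the defining relations of $\mathbf{U}^\imath$. Isotropic odd nodes in particular need the assumption $\tau i\neq i$ to avoid a degenerate $B_i^2$ relation. For these I will first check the rank-one and rank-two cases explicitly.
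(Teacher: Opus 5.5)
\textbf{Main gap: the embedding $\xi^\imath_\tau$ in Step 3.} This is where the substance of the theorem lies, and your plan is missing the key ingredient.

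The map $(1\otimes\tau)\circ\Delta$ is not multiplicative from $(\hat{\mathbf{U}}_q^{\geq0})^\imath_\tau$ to the diagonal iHopf algebra. Already on $\genK_i\circ\genK_j$ the two sides differ by the scalar $\langle\genK_{\tau i},\genK_j\rangle$. The phrase ``suitably twisted by $\varrho$ and $\genK$'' hides what is actually needed: a $\tau$-twisted compatible functional $\chi$ on $\hat{\mathbf{U}}_q^{\geq0}$. This means $\chi(ab)=\sum\chi(a_{(1)})\chi(b_{(2)})\langle\tau(a_{(2)}),b_{(1)}\rangle$, with $\chi(\genE_i)=0$, $\chi(\genK_i)=\langle\genK_i,\genK_{\tau i}\rangle$ and $\chi(\varrho)=\sqrt{-1}$.

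Once $\chi$ exists, $\xi^\imath_\tau(a)=\sum\chi(a_{(2)})\,\tau(a_{(3)})\otimes a_{(1)}$ is a homomorphism by a formal Sweedler computation. The real work is showing that $\chi$ is well defined, i.e.\ that it vanishes on the ideal of defining relations. For the Serre relations this uses the coproduct formula of Lemma~\ref{Delta}. This is exactly where the hypothesis $\tau i\neq i$ for $i\in\mathbb{I}_\iso$ enters: if $\tau i=i$, then $\chi(\genE_i^2)=\chi(\genK_i)\chi(\varrho)\langle\tau(\genE_i),\genE_i\rangle\neq0$, although $\genE_i^2=0$. Your plan never produces such a $\chi$.

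Your orientation is also reversed. With your $\Phi$, the first tensor factor is the $\genE$-side, so the iHopf generator $\genE_i$ must go to the second factor and $\tau(\genE_i)$ to the first. Your own output $\genE_i+(\text{unit})\genK_i'\genF_{\tau i}$ is not $B_i=\genF_i+\varsigma_i\genE_{\tau i}\genK_i'$, since the roles of $\genE$ and $\genF$ are swapped. So the square does not commute as you claim. Finally, once $\chi$ is inserted, $(\ep\otimes1)\circ\xi^\imath_\tau=\tau$ no longer holds, so injectivity needs a different argument; the paper uses leading terms with respect to a degree function.

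\textbf{Second gap: the construction of $\Phi^\imath_\tau$ in Step 2.} $(\hat{\mathbf{U}}_q^{\geq0})^\imath_\tau$ is defined as a new product on a vector space, not by generators and relations. To define $\Phi^\imath_\tau$ on generators, you would first have to prove that Proposition~\ref{KE} plus the $\circ$-rewritten Serre relations form a complete presentation. You would then have to verify those relations for the $B_i$ inside $\Ui$, which is precisely the iSerre problem the paper solves \emph{as a consequence} of this theorem.

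The paper avoids both problems by setting $\Phi^\imath_\tau:=\Phi^{-1}\circ\xi^\imath_\tau$. This map is injective, and for the specialized parameters $\varsigma_i$ one has
$\Phi^{-1}\xi^\imath_\tau(\genE_i)=(\sqrt{-1})^{p(i)}\tilde\varrho^{p(i)}B_i$, $\Phi^{-1}\xi^\imath_\tau(\genK_i)=\tilde\varrho^{p(i)}\tilde k_{\tau i}$ and $\Phi^{-1}\xi^\imath_\tau(\varrho)=-\sqrt{-1}\tilde\varrho$.
Hence its image is exactly $\Ui$, and the square commutes by construction.

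\textbf{Your normalizations also fail concretely.}
\begin{itemize}
\item Sending $\genE_j\mapsto B_j$ and $\genK_i\mapsto\tilde k_i$ contradicts Proposition~\ref{KE}. There $\genK_i\circ\genE_j=q_i^{a_{i,j}-a_{i,\tau j}}\genE_j\circ\genK_i$, whereas $\tilde k_iB_j=q_i^{a_{i,\tau j}-a_{i,j}}B_j\tilde k_i$. One needs $\tilde k_{\tau i}$ instead.
\item You never give the image of $\varrho$, although $\varrho\circ\varrho=-1$ forces a factor $\sqrt{-1}$.
\item In Step 1, $\varrho\mapsto\varrho\otimes\varrho$ breaks $\genE_i\varrho=(-1)^{p(i)}\varrho\genE_i$. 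Indeed $(\genE_i\otimes1)\circ(\varrho\otimes\varrho)=\genE_i\varrho\otimes\varrho=(\varrho\otimes\varrho)\circ(\genE_i\otimes1)$, because the factor $\langle\varrho,\varrho\rangle=-1$ absorbs the sign.
\item $\varrho'$ is not assigned at all.
\end{itemize}
A consistent choice for $\Phi$ is $\genE_i\mapsto\varrho^{p(i)}\genE_i\otimes\varrho^{p(i)}$, $\genK_i\mapsto(-1)^{p(i)}\varrho^{p(i)}\genK_i\otimes\varrho^{p(i)}$, $\genF_i\mapsto1\otimes\genE_i\varrho^{p(i)}$, $\genK_i'\mapsto1\otimes\genK_i$, $\varrho\mapsto1\otimes\varrho$, $\varrho'\mapsto\varrho\otimes1$.
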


The iHopf realization of iquantum supergroups admits various applications. As an application of Theorem \ref{A}, we derive the Serre relations for quasi-split iquantum supergroups. To simplify the conversion formula, we employ formal symbols $\mathfrak{e}_{i_{t}} (1\leq t\leq l)$. (Do not treat $\mathfrak{e}_{i_{t}}$ as algebra generators!) Denoting $\genE_i^{\star}=\genE_i+\mathfrak{e}_{i}$ and normalizing the multiplication via the matching rule \eqref{evaluate}, we reformulate the Serre relations for the iHopf algebra  $({\hat{\mathbf{U}}}_q^{\geq0})_\tau^{ \imath}$ of basic type as follows.

\begin{alphatheorem}[Theorem \ref{star}]
For any $\tau$ with $\tau i\neq i$ if $i\in \mathbb{I}_\iso$, the superalgebra $({\hat{\mathbf{U}}}_q^{\geq0})_\tau^{ \imath}$  of basic type satisfies the following relations whenever the given Dynkin subdiagram appears: 

\begin{enumerate}
\item For $i,j\in \mathbb{I}_\one$ with $a_{ij}=0$, 
$$\genE_i^{\star}\circ\genE_j^{\star}=-\genE_j^{\star}\circ\genE_i^{\star}.$$
\item For $i\in \mathbb{I}_\zero\cup \mathbb{I}_\niso$ and $i\neq j$ with $a_{i,j}\in\{0,-1,-2\}$, 
$$\Ad_q \genE_i^{{\star} ^{1-a_{i,j}}}(\genE_j^{\star})=0.
$$
\item For  $sgn_{ij}\neq sgn_{jk}$,
$$\hspace{.75in}\xy
(-10,0)*{\odot};(0,0)*{\otimes}**\dir{-};(0,0)*{\otimes};(10,0)*{\odot}**\dir{-};
(-10,-4)*{\scriptstyle i};(0,-4)*{\scriptstyle j};(10,-4)*{\scriptstyle k};
\endxy$$ 
$$
\Ad_q \genE_j^{\star}\circ\Ad_{q}\genE_k^{\star}\circ \Ad_{q} \genE_j^{\star} (\genE_i^{\star})=0.
$$
\item For $\tau j=k$,
$$\xy
(0,0)*{\textup\fullmoon};(7,5)*{\otimes}**\dir{-};(0,0)*{\textup\fullmoon};(7,-5)*{\otimes}**\dir{-};(7,5)*{\otimes};(7,-5)*{\otimes}**\dir{=};
(-4,0)*{\scriptstyle i};(10,5)*{\scriptstyle j};(10,-5)*{\scriptstyle k};
\endxy\quad $$
$$\Ad_q \genE_k^{\star}\circ \Ad_q \genE_j^{\star} (\genE_i^{\star})=\Ad_q \genE_j^{\star} \circ \Ad_q \genE_k^{\star}(\genE_i^{\star}).$$
\end{enumerate}
\end{alphatheorem}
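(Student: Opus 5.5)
The plan is to deduce each relation from the corresponding Serre relation of Yamane \cite{Ya94} for the quantum supergroup. That relation holds in the Borel part ${\hat{\mathbf{U}}}_q^{\geq0}$ in its original multiplication. We then rewrite it in the iHopf multiplication using the conversion formula of Theorem \ref{A}.

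\textbf{Step 1: expand the classical relation.} Each relation in (1)--(4) is a linear combination $\sum_I c_I E_I=0$ of monomials in the $\genE_i$, with coefficients $c_I\in\C(q)$. In every such monomial a given generator occurs at most $1-a_{ij}\le 3$ times. This holds in ${\hat{\mathbf{U}}}_q^{\geq0}$ because the Serre relations live in the positive half. We substitute $E_I=E_I^\circ+o(E_I^\circ)$ from Theorem \ref{A} into each term.

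\textbf{Step 2: organise the correction terms.} The correction $o(E_I^\circ)$ is a sum over matchings $M\in\mathcal{M}_s(S)$. Each pair $(x,y)\in M$ contributes the factor $-\varphi(x,y)\,\varrho^{p(i_x)}\circ\genK_{i_y}$, and the unmatched positions keep their $\genE_{i_u}$ in order. The formal device $\genE_i^\star=\genE_i+\mathfrak{e}_i$ is built to reproduce exactly this. Expand a $\circ$-product of $\genE^\star$'s and apply the matching rule \eqref{evaluate}: each pair $\mathfrak{e}_{i_x},\mathfrak{e}_{i_y}$ is replaced by the factor attached to $(x,y)$, and unpaired $\mathfrak{e}$'s are set to zero. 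This step is the key bookkeeping check. It requires that the coefficients $f,g,\varphi$ in \eqref{def1}--\eqref{def3} and the admissibility conditions \eqref{def:m} agree with what \eqref{evaluate} assigns. In particular, $\varphi(x,y)$ is nonzero only when $i_y=\tau i_x$, up to the $q$-powers and signs coming from the intermediate letters. With this agreement, $E_I=\mathop{\circ\prod}_u\genE^\star_{i_u}$ after evaluation.

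\textbf{Step 3: transfer the relation.} By linearity of the evaluation and of the expansion, the original relation becomes the same expression with each $E_I$ replaced by the starred $\circ$-monomial. The $q$-commutator coefficients in $\Ad_q$ must be the same in both multiplications. Proposition \ref{KE} is used here: it gives the commutation of $\genK_i$ and $\varrho$ with $\genE_j$ under $\circ$, which lets the Cartan factors be moved to the left as in Theorem \ref{A}. The weights, and hence the $q$-powers in $\Ad_q$, are unchanged, since $\circ$ respects the grading in the $\genE$'s. So $\Ad_q$ computed with $\circ$ and $\star$ is literally the image of $\Ad_q$ computed classically. This yields (2) and (3) directly.

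\textbf{Special cases and the main obstacle.} For (1), Theorem \ref{A} in length $2$ gives $\genE_i\genE_j=\genE_i\circ\genE_j-\varphi(1,2)\,\varrho^{p(i)}\circ\genK_j$. This is consistent with the $\star$ rule, and $\genE_i\genE_j+\genE_j\genE_i=0$ transfers immediately. For (4), the relation involves both $j$ and $k=\tau j$. The hypothesis $\tau i\ne i$ for $i\in\mathbb{I}_\iso$ excludes self-pairings of isotropic odd nodes, which would otherwise produce uncontrolled terms; this is needed to invoke Theorem \ref{A} and Theorem \ref{Phii}. The main obstacle is Step 2: confirming that the signs $(-1)^s$, the parity twists $\varrho^{p(i_x)}$, and the super-signs picked up when reordering around odd letters inside $\varphi$ coincide with those generated by the matching rule. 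I would first verify this for $l=2,3$ explicitly. I would then argue by induction on $l$, peeling off the last letter and using the coproduct formula behind Theorem \ref{A}.
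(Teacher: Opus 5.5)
Your proposal is correct and follows the paper's route: Theorem \ref{star} is obtained by applying Theorem \ref{formula} monomial by monomial to the relations (Iso), (N-Iso), (AB) and (D) of Proposition \ref{P:SerreRelations}, and the $\star$-notation with the matching rule \eqref{evaluate} just repackages $E_I=E_I^\circ+o(E_I^\circ)$ as the evaluation of $\genE^\star_{i_1}\circ\cdots\circ\genE^\star_{i_l}$. The step you flag as the main obstacle is automatic, since \eqref{evaluate} is defined with exactly the coefficients $(-1)^s\varphi(x,y)$ and left-placed factors $\varrho^{p(i_x)}\circ\genK_{i_y}$ of Theorem \ref{formula}, so no further sign check or induction is needed (and Theorem \ref{formula} itself does not use the hypothesis on $\tau$).
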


\subsection{Organization}
{The paper is organized as follows.} In Section \ref{Preliminaries}, we review the root data of basic Lie superalgebras and recall the definitions of quantum supergroups of basic type, quantum supersymmetric pairs and iHopf algebras. In Section \ref{Hopfpair}, we establish a Hopf pairing  of the Borel quantum supergroup ${\hat{\mathbf{U}}}_q^{\geq0}$. Section \ref{iHopf} is devoted to the study of the iHopf algebra $({\hat{\mathbf{U}}}_q^{\geq0})_{\tau}^\imath$ and its fundamental  relations,  where we also formulate a conversion formula between the original multiplication and iHopf multiplication. In Section \ref{realization}, we provide a realization of the quantum supersymmetric pairs $({\mathbf{U}},{\mathbf{U}}^\imath)$ of quasi-split type via iHopf algebras $({\hat{\mathbf{U}}}_q^{\geq0}\otimes{\hat{\mathbf{U}}}_q^{\geq0})^{\imath}$ and $({\hat{\mathbf{U}}}_q^{\geq0})_\tau^{\imath}$. In Section \ref{application}, we show that the multiplication conversion formulas derived in Section \ref{iHopf} yield explicit Serre relations for quasi-split iquantum supergroups.

\section{Preliminaries}\label{Preliminaries}

In this section, we review the definitions and fundamental properties of basic Lie superalgebras  $\mathfrak{g}$,  quantum supergroups, quantum supersymmetric pairs and iHopf algebras.

\subsection{Root data}
Set $\mathbb{I}=\mathbb{I}_\zero\cup\mathbb{I}_\one=\{1,2,\cdots,r\}$. Let $\mathfrak{g}=\mathfrak{g}_\zero\oplus\mathfrak{g}_\one$ be a finite-dimensional complex simple Lie superalgebra of basic type such that $\mathfrak{g}_{\ol{1}}\neq 0$, and let $$\Pi=\Pi_\zero\cup\Pi_\one=\{\alpha_1, \alpha_2,\cdots \alpha_r\}$$ 
with $r$ the rank of $\mathfrak{g}$, be the simple roots of $\mathfrak{g}$. 

Denote by $\Phi_\zero$ and $\Phi_\one$ the sets of the even and odd roots, respectively, and $\Phi^+=\Phi_\zero^+\cup \Phi_\one^+$ (resp. $\Phi^-=\Phi_\zero^-\cup \Phi_\one^-$ ) the associated positive (resp. negative) root system. Decompose $\Phi_\one=\Phi_{\iso}\cup \Phi_{\niso}$, where $\Phi_{\iso}$ (resp. $\Phi_\niso$) is the set of isotropic (resp. non-isotropic) odd roots. Decompose $\Pi_\one=\Pi_\iso\cup \Pi_\niso$ and $\mathbb{I}_\one=\mathbb{I}_\iso\cup \mathbb{I}_\niso$ accordingly.
For $\al\in\Phi=\Phi_\zero\cup\Phi_\one$, we write 
\begin{align*}
    p(\al)=s,~\mbox{ if }~\al\in\Phi_s.
\end{align*}
 We often write 
 \begin{equation}
p(i):=p(\alpha_i),
 \end{equation}
 for $i\in\mathbb{I}$. 
 By linearity, we also have a parity function $p(\cdot)$ on the root lattice $X=\Z\Pi$.
 Set $\mathfrak{n^{\pm}}=\oplus_{\al\in\Phi^{\pm}}\mathfrak{g}_{\al}$. Then the Lie superalgebra $\mathfrak{g}$ has a triangular decomposition $$\mathfrak{g=n^+\oplus h \oplus n^-}.$$

The basic Lie superalgebras carry a non-degenerate even supersymmetric bilinear form associated with their Cartan matrices \cite{Kac77}. Let $(A,\mu)$ denote the corresponding Cartan data, where $A=(a_{i,j})$  is the Cartan matrix and $\mu\subset\mathbb{I}$ specifies the parity of the generators. The matrix $A$ is assumed to be symmetrizable; that is, there exist non-zero rational numbers  $d_1, d_2,\cdots,d_r$ such that $$d_i a_{i,j}=d_j a_{j,i}.$$ Since the non-degenerate supersymmetric invariant bilinear form $(,)$ on $\mathfrak{g}$  is unique up to a scalar factor, and its restriction to the Cartan subalgebra remains non-degenerate, the symmetrizing factors $d_i$ are determined up to an overall constant. We may therefore assume without loss of generality that $d_1 =1$.

 If $(\alpha_i,\alpha_j)\neq 0$, define
\begin{align*}
    sgn_{ij}=\text{sign of} ~(\alpha_i,\alpha_j),
\end{align*}
which provides the additional information needed to recover a Cartan matrix from its Dynkin diagram.

 Recall that the set of simple coroots 
$\Pi^{\vee}=\{h_i~|~i\in\mathbb{I}\}\subset \mathfrak{h}$ 
is given by 
\begin{align*}
    \al_j(h_i)=a_{i,j}
\end{align*}
for $i,j\in\mathbb{I}$. Let $Y=\Z\Pi^{\vee}$ denote the coroot lattice.

Let $\mathfrak{g}(A,\{s\})$ be the Lie superalgebra whose root system is defined with respect to a special Borel sub-superalgebra containing at most one odd simple root; such a system is called a $distinguished$ root system.

 As in \cite[Section 2]{CHW16}, Table~1 below lists Dynkin diagrams for arbitrary choices of $\Phi^+$ (for type $A$-$D$), with nodes labelled according to the simple roots. Diagrams marked $(\star)$ in types $F(3|1)$ and $G(3)$ are ${distinguished}$
($F(3|1)$ is often referred to as $F(4)$ in the literature). Nodes correspond to even, odd isotropic, or odd non-isotropic simple roots, denoted  $\fullmoon$, $\otimes$, and $\newmoon$, respectively; $\odot$ indicates a root that is either odd isotropic or even, while  $\yy$ indicates one that is either odd non-isotropic or even.

 \begin{table}
  \vspace{.5cm}
 {Table 1: Dynkin diagrams for general simple systems}
  \vspace{.5cm}
\begin{tabular}{|c|c|}\hline
$A(m,n)$&
$$ \xy
(-30,0)*{\odot};(-20,0)*{\odot}**\dir{-};(-15,0)*{\cdots};(-10,0)*{\odot};(0,0)*{\odot}**\dir{-};
(0,0)*{\odot};(10,0)*{\odot}**\dir{-};
(15,0)*{\cdots};(20,0)*{\odot};(30,0)*{\odot}**\dir{-};
(-30,-4)*{\scriptstyle 1};(-20,-4)*{\scriptstyle 2};(-10,-4)*{\scriptstyle n};(0,-4)*{\scriptstyle n+1};(10,-4)*{\scriptstyle n+2};
(20,-4)*{\scriptstyle m+n};(30,-4)*{\scriptstyle m+n+1};
(0,-8)*{};(0,8)*{};
\endxy$$
\\\hline
$B(m,n+1)$&
$$ \xy
(-30,0)*{\odot};(-20,0)*{\odot}**\dir{-};(-15,0)*{\cdots};(-10,0)*{\odot};(0,0)*{\odot}**\dir{-};
(0,0)*{\odot};(10,0)*{\odot}**\dir{-};
(15,0)*{\cdots};(20,0)*{\odot};(30,0)*{\yy}**\dir{=};(25,0)*{>};
(-30,-4)*{\scriptstyle 1};(-20,-4)*{\scriptstyle 2};(-10,-4)*{\scriptstyle n};(0,-4)*{\scriptstyle n+1};(10,-4)*{\scriptstyle n+2};
(20,-4)*{\scriptstyle m+n};(31,-4)*{\scriptstyle m+n+1};
(0,-8)*{};(0,8)*{};
\endxy$$
\\\hline
$C(n+1)$&
$$ \xy
(-20,0)*{\odot};(-10,0)*{\odot}**\dir{-};(-5,0)*{\cdots};(0,0)*{\odot};(10,0)*{\odot}**\dir{-};
(10,0)*{\odot};(20,0)*{\fullmoon}**\dir{=};(15,0)*{<};
(-20,-4)*{\scriptstyle 1};(-10,-4)*{\scriptstyle 2};
(10,-4)*{\scriptstyle n};(20,-4)*{\scriptstyle n+1};
(0,-8)*{};(0,8)*{};
\endxy$$
\\
$D(m,n+1)$&
$$\xy
(-30,0)*{\odot};(-20,0)*{\odot}**\dir{-};(-15,0)*{\cdots};
(-10,0)*{\odot};(0,0)*{\odot}**\dir{-};(0,0)*{\odot};(10,0)*{\odot}**\dir{-};
(15,0)*{\cdots};(20,0)*{\odot};(27,5)*{\fullmoon}**\dir{-};(20,0)*{\odot};(27,-5)*{\fullmoon}**\dir{-};
(-30,-4)*{\scriptstyle 1};(-20,-4)*{\scriptstyle 2};(-10,-4)*{\scriptstyle n};(0,-4)*{\scriptstyle n+1};(10,-4)*{\scriptstyle n+2};
(33,5)*{\scriptstyle m+n};(34,-5)*{\scriptstyle m+n+1};
(0,-8)*{};(0,8)*{};
\endxy$$
\\
&
$$\xy
(-30,0)*{\odot};(-20,0)*{\odot}**\dir{-};(-15,0)*{\cdots};
(-10,0)*{\odot};(0,0)*{\odot}**\dir{-};(0,0)*{\odot};(10,0)*{\odot}**\dir{-};
(15,0)*{\cdots};(20,0)*{\odot};(27,5)*{\otimes}**\dir{-};(20,0)*{\odot};(27,-5)*{\otimes}**\dir{-};
(27,5)*{\otimes};(27,-5)*{\otimes}**\dir{=};
(-30,-4)*{\scriptstyle 1};(-20,-4)*{\scriptstyle 2};(-10,-4)*{\scriptstyle n};(0,-4)*{\scriptstyle n+1};(10,-4)*{\scriptstyle n+2};
(33,5)*{\scriptstyle m+n};(34,-5)*{\scriptstyle m+n+1};
(0,-8)*{};(0,8)*{};
\endxy$$
\\\hline
$F(3|1)$&
$$\xy (0,8)*{};(-20,0)*{(\star)};
(-15,0)*{\fullmoon};(-5,0)*{\fullmoon}**\dir{-};
(-5,0)*{\fullmoon};(5,0)*{\fullmoon}**\dir{=};(0,0)*{>};(5,0)*{\fullmoon};(15,0)*{\otimes}**\dir{-};
(-15,-4)*{\scriptstyle 1};(-5,-4)*{\scriptstyle 2};(5,-4)*{\scriptstyle 3};(15,-4)*{\scriptstyle 4};
(0,-8)*{};
\endxy$$
\\&
$$\xy (0,8)*{};
{\ar@3{-}(-15,0)*{\fullmoon};(-5,0)*{\otimes}};(-10,0)*{>};
(-5,0)*{\otimes};(5,0)*{\fullmoon}**\dir{=};(0,0)*{<};(5,0)*{\fullmoon};(15,0)*{\fullmoon}**\dir{-};
(-15,-4)*{\scriptstyle 1};(-5,-4)*{\scriptstyle 2};(5,-4)*{\scriptstyle 3};(15,-4)*{\scriptstyle 4};
(0,-8)*{};
\endxy$$ \hspace{.25in}
$$\xy (0,8)*{};
{\ar@3{-}(-15,0)*{\fullmoon};(-5,0)*{\otimes}};(-10,0)*{>};
(-5,0)*{\otimes};(5,0)*{\fullmoon}**\dir{-};(5,0)*{\fullmoon};(15,0)*{\fullmoon}**\dir{=};(10,0)*{<};
(-15,-4)*{\scriptstyle 1};(-5,-4)*{\scriptstyle 2};(5,-4)*{\scriptstyle 3};(15,-4)*{\scriptstyle 4};
(0,-8)*{};
\endxy$$
\\&
$$\xy (0,8)*{};
{\ar@3{-}(-10,0)*{\otimes};(-5,7)*{\otimes}};(-5,7)*{\fullmoon};(0,0)*{\otimes}**\dir{-};
(-10,0)*{\otimes};(0,0)*{\otimes}**\dir{=};(0,0)*{\otimes};(10,0)*{\fullmoon}**\dir{=};(5,0)*{<};
(-10,-4)*{\scriptstyle 1};(-5,9)*{\scriptstyle 2};(0,-4)*{\scriptstyle 3};(10,-4)*{\scriptstyle 4};
(0,-8)*{};
\endxy$$ \hspace{.25in}
$$\xy (0,8)*{};
(-10,0)*{\otimes};(-5,7)*{\fullmoon}**\dir{-};(-5,7)*{\fullmoon};(0,0)*{\otimes}**\dir{-};
(-10,0)*{\otimes};(0,0)*{\otimes}**\dir{=};(0,0)*{\otimes};(10,0)*{\fullmoon}**\dir{=};(5,0)*{<};
(-10,-4)*{\scriptstyle 1};(-5,9)*{\scriptstyle 2};(0,-4)*{\scriptstyle 3};(10,-4)*{\scriptstyle 4};
(0,-8)*{};
\endxy$$
\\\hline
$G(3)$&
$$ \xy(0,5)*{};(-15,0)*{(\star)};
{\ar@3{-}(0,0)*{\fullmoon};(10,0)*{\fullmoon}};(5,0)*{<};(-10,0)*{\otimes};(0,0)*{\fullmoon}**\dir{-};
(-10,-4)*{\scriptstyle 1};(0,-4)*{\scriptstyle 2};(10,-4)*{\scriptstyle 3};
(0,-8)*{};(0,8)*{};
\endxy$$ \hspace{.25in}
$$ \xy(0,5)*{};
{\ar@3{-}(0,0)*{\otimes};(10,0)*{\fullmoon}};(5,0)*{<};(-10,0)*{\otimes};(0,0)*{\otimes}**\dir{-};
(-10,-4)*{\scriptstyle 1};(0,-4)*{\scriptstyle 2};(10,-4)*{\scriptstyle 3};
(0,-8)*{};(0,8)*{};
\endxy$$
\\&
$$ \xy(0,5)*{};
{\ar@3{-}(0,0)*{\otimes};(10,0)*{\fullmoon}};(5,0)*{<};(-10,0)*{\newmoon};(0,0)*{\otimes}**\dir{-};
(-10,-4)*{\scriptstyle 1};(0,-4)*{\scriptstyle 2};(10,-4)*{\scriptstyle 3};
(0,-8)*{};(0,8)*{};
\endxy$$ \hspace{.25in}
$$\xy (0,5)*{};
{\ar@2{-}(-5,0)*{\otimes};(0,7)*{\fullmoon}};(0,7)*{\fullmoon};(5,0)*{\otimes}**\dir{-};
{\ar@3{-}(-5,0)*{\otimes};(5,0)*{\otimes}};
(-5,-4)*{\scriptstyle 1};(0,9)*{\scriptstyle 2};(5,-4)*{\scriptstyle 3};
(0,-8)*{};(0,8)*{};
\endxy$$
\\\hline
$D(2|1;\al)$&
$$\xy
(0,0)*{\otimes};(7,5)*{\fullmoon}**\dir{-};(0,0)*{\otimes};(7,-5)*{\fullmoon}**\dir{-};(2,4)*{\scriptstyle -1};(2,-4)*{\scriptstyle 1+\al};
(-3,0)*{\scriptstyle 1};(10,5)*{\scriptstyle 2};(10,-5)*{\scriptstyle 3};
(0,8)*{};(0,-8)*{};
\endxy$$ \hspace{.25in}
$$\xy
(0,0)*{\otimes};(7,5)*{\fullmoon}**\dir{-};(0,0)*{\otimes};(7,-5)*{\fullmoon}**\dir{-};(2,4)*{\scriptstyle -\al};(2,-4)*{\scriptstyle 1+\al};
(-3,0)*{\scriptstyle 1};(10,5)*{\scriptstyle 2};(10,-5)*{\scriptstyle 3};
(0,8)*{};(0,-8)*{};
\endxy$$
\\$(\al\in\Z_{>0})$&
$$\xy
(-5,0)*{\otimes};(0,7)*{\otimes}**\dir{-};
(-5,0)*{\otimes};(5,0)*{\otimes}**\dir{-};
(0,7)*{\otimes};(5,0)*{\otimes}**\dir{-};
(-6,-3)*{\scriptstyle 1};(0,10)*{\scriptstyle 2};(6,-3)*{\scriptstyle 3};
(4,4)*{\scriptstyle \al};(0,-2)*{\scriptstyle -1-\al};
(0,8)*{};(0,-8)*{};
\endxy$$
\\\hline
\end{tabular}
\end{table}

 \subsection{ Quantum supergroups}
 Set $q_i :=q^{d_i}$, and then we have $$q_i^{a_{i,j}}=q_j^{a_{j,i}}$$ for all $i,j\in \mathbb{I}$. 
Denote the quantum integers and quantum binomial coefficients by  
\begin{align*}
[a]=\frac{q^a-q^{-a}}{q-q^{-1}}
, \qquad [a]_i =
    \frac{q_i^a-q_i^{-a}}{q_i-q_i^{-1}},\qquad
 \begin{bmatrix}a\\k\end{bmatrix}_i
 =\frac{[a]_i[a-1]_i\cdots[a-k+1]_i}{[k]_i!},
\end{align*}
for $a\in\Z$, $k\in\Z_{\geq 0}$, $i\in\mathbb{I}$.

Following \cite{Dri88,Ya94}, the (universal) quantum supergroup ${\ol{U}}_q(\mathfrak{g})$ is defined to be the $\C(q)$-superalgebra generated by $\genK_i$, $\genK'_i$, $\genE_i$, $\genF_i$, $i\in\mathbb{I}$, subject to the following relations:
    \begin{equation}\label{basic relation}
    \begin{aligned}
     &\genK_i\genK_j=\genK_j\genK_i,\qquad  
     \genK_i\genK'_j=\genK'_j\genK_i,\qquad \genK'_i\genK'_j=\genK'_j\genK'_i,\\   &\genK_i\genE_j=q_i^{a_{i,j}}\genE_j\genK_i,\quad \genK'_i\genE_j=q_i^{-a_{i,j}}\genE_j\genK'_i,\\     
     &\genK_i\genF_j=q_i^{-a_{i,j}}\genF_j\genK_i,\quad \genK'_i\genF_j=q_i^{a_{i,j}}\genF_j\genK'_i,     \\
     &\genE_i\genF_j-(-1)^{p(i)p(j)}\genF_j\genE_i=\delta_{ij}\frac{\genK_i-\genK'_i}{q_i-q_i^{-1}},
    \end{aligned}
     \end{equation}
    together with the higher order quantum Serre relations, see Proposition \ref{P:SerreRelations}.

Recall that the $q$-commutator on homogeneous elements $u\in {\ol{U}}_q(\mathfrak{g})_{\al},v\in{\ol{U}}_q(\mathfrak{g})_{\beta}$ is defined by
    \begin{align*}
        \ad_q u(v)=[u,v]_q=uv-(-1)^{p(u)p(v)}q^{(\al,\beta)}vu.
    \end{align*}

As shown in \cite{Ya94,Ya99}, higher order quantum Serre relations appear in the context of quantum supergroups associated with arbitrary Dynkin diagrams $\Gamma$. For later use, we adopt the notation  from \cite[Proposition 2.7]{CHW16}.

\begin{prp}  \cite[Proposition 2.7]{CHW16}
\label{P:SerreRelations}
The superalgebra ${\ol{U}}_q(\mathfrak{g})$  satisfies the following relations whenever the given Dynkin subdiagram $\Gamma$ appears:

\begin{enumerate}
\item[(Iso)] For $i,j\in \mathbb{I}_\one$ with $a_{i,j}=0$,
$$\genE_i\genE_j=-\genE_j\genE_i.$$
\item[(N-Iso)] For $i\in \mathbb{I}_\zero\cup \mathbb{I}_\niso$ and $i\neq j$,
$$ 
\ad_q \genE_i^{1-a_{i,j}}(\genE_j)=0.
$$
\item[(AB)] For
$$\hspace{.75in}\xy
(-10,0)*{\odot};(0,0)*{\otimes}**\dir{-};(0,0)*{\otimes};(10,0)*{\odot}**\dir{-};
(-10,-4)*{\scriptstyle i};(0,-4)*{\scriptstyle j};(10,-4)*{\scriptstyle k};
\endxy\quad(sgn_{ij}\neq sgn_{jk})$$
or
$$\xy
(-10,0)*{\textup\yy};(0,0)*{\otimes}**\dir{=};(0,0)*{\otimes};(10,0)*{\odot}**\dir{-};(-5,0)*{<};
(-10,-4)*{\scriptstyle i};(0,-4)*{\scriptstyle j};(10,-4)*{\scriptstyle k};
\endxy$$
$$
\ad_q \genE_j\cdot\ad_{q}\genE_k\cdot \ad_{q} \genE_j (\genE_i)=0.
$$

\item[(CD1)] For
$$\xy
(-10,0)*{\textup\fullmoon};(0,0)*{\otimes}**\dir{=};(0,0)*{\otimes};(10,0)*{\otimes}**\dir{-};(-5,0)*{>};
(-10,-4)*{\scriptstyle i};(0,-4)*{\scriptstyle j};(10,-4)*{\scriptstyle k};
\endxy$$
$$\ad_q \genE_j\cdot \ad_q (\ad_q \genE_j(\genE_k))\cdot \ad_q \genE_i\cdot \ad_q \genE_j (\genE_k)=0.$$
\item[(CD2)] For
$$\xy
(-10,0)*{\odot};(0,0)*{\textup\fullmoon}**\dir{-};(0,0)*{\textup\fullmoon};(10,0)*{\otimes}**\dir{-};(15,0)*{<};(10,0)*{\otimes};(20,0)*{\textup\fullmoon}**\dir{=};
(-10,-4)*{\scriptstyle i};(0,-4)*{\scriptstyle j};(10,-4)*{\scriptstyle k};(20,-4)*{\scriptstyle l};
\endxy$$
$$\ad_q \genE_k\cdot \ad_q \genE_j \cdot \ad_q \genE_k\cdot \ad_q \genE_l\cdot \ad_q \genE_k\cdot \ad_q \genE_j (\genE_i)=0.$$
\item[(D)] For
$$\xy
(0,0)*{\odot};(7,5)*{\otimes}**\dir{-};(0,0)*{\odot};(7,-5)*{\otimes}**\dir{-};(7,5)*{\otimes};(7,-5)*{\otimes}**\dir{=};
(-4,0)*{\scriptstyle i};(10,5)*{\scriptstyle j};(10,-5)*{\scriptstyle k};
\endxy$$
$$\ad_q \genE_k\cdot \ad_q \genE_j (\genE_i)=\ad_q \genE_j \cdot \ad_q \genE_k(\genE_i).$$
\item[(F1)] For
$$\xy
{\ar@3{-}(-15,0)*{\textup\fullmoon};(-5,0)*{\otimes}};(-10,0)*{>};
(-5,0)*{\otimes};(5,0)*{\textup\fullmoon}**\dir{=};(0,0)*{<};(5,0)*{\textup\fullmoon};(15,0)*{\textup\fullmoon}**\dir{-};
(-15,-4)*{\scriptstyle 1};(-5,-4)*{\scriptstyle 2};(5,-4)*{\scriptstyle 3};(15,-4)*{\scriptstyle 4};
\endxy$$
$$\ad_q E\cdot \ad_q E \cdot \ad_q \genE_2 \cdot \ad_q \genE_3 ( \genE_4)=0,$$
where $E=\ad_q(\ad_q \genE_1(\genE_2))\cdot \ad_q \genE_3(\genE_2)$.
\item[(F2)] For
$$\xy
{\ar@3{-}(-15,0)*{\textup\fullmoon};(-5,0)*{\otimes}};(-10,0)*{>};
(-5,0)*{\otimes};(5,0)*{\textup\fullmoon}**\dir{-};(10,0)*{<};(5,0)*{\textup\fullmoon};(15,0)*{\textup\fullmoon}**\dir{=};
(-15,-4)*{\scriptstyle 1};(-5,-4)*{\scriptstyle 2};(5,-4)*{\scriptstyle 3};(15,-4)*{\scriptstyle 4};
\endxy$$
\begin{align*}
&\ad_q(\ad_q \genE_1(\genE_2))  \cdot \ad_q (\ad_q \genE_3(\genE_2)) \cdot \ad_q \genE_3(\genE_4)
\\
&= \ad_q(\ad_q \genE_3(\genE_2))\cdot \ad_q (\ad_q \genE_1(\genE_2)) \cdot \ad_q \genE_3(\genE_4).
\end{align*}

\item[(F3)] For
$$\xy
(-10,0)*{\otimes};(0,0)*{\otimes}**\dir{=};(0,0)*{\otimes};(10,0)*{\textup\fullmoon}**\dir{=};(5,0)*{<};
(-10,-4)*{\scriptstyle 1};(0,-4)*{\scriptstyle 3};(10,-4)*{\scriptstyle 4};
\endxy$$
$$\ad_q \genE_3\cdot\ad_{q}\genE_1\cdot \ad_{q} \genE_3 (\genE_4)=0.$$
\item[(F4)]  For
$$\xy
{\ar@3{-}(-5,0)*{\otimes};(0,7)*{\otimes}};(-5,0)*{\otimes};(5,0)*{\otimes}**\dir{=};(0,7)*{\otimes};(5,0)*{\otimes}**\dir{-};
(-8,0)*{\scriptstyle 1};(0,10)*{\scriptstyle 2};(8,0)*{\scriptstyle 3};
\endxy$$
$$[3]\ad_q \genE_1\cdot \ad_q \genE_2 (\genE_3)+[2]\ad_q \genE_2 \cdot \ad_q \genE_1(\genE_3)=0.$$
\item[(G1)] For
$$ \xy
{\ar@3{-}(0,0)*{\otimes};(10,0)*{\textup\fullmoon}};(5,0)*{<};(-10,0)*{\otimes};(0,0)*{\otimes}**\dir{-};
(-10,-4)*{\scriptstyle 1};(0,-4)*{\scriptstyle 2};(10,-4)*{\scriptstyle 3};
\endxy$$
$$\ad_q E\cdot \ad_q E \cdot \ad_q E \cdot \ad_q \genE_2(\genE_1)=0,$$  	
where $E=\ad_q \genE_2(\genE_3)$.

\item[(G2)] For
$$ \xy
{\ar@3{-}(0,0)*{\otimes};(10,0)*{\textup\fullmoon}};(5,0)*{<};(-10,0)*{\textup\newmoon};(0,0)*{\otimes}**\dir{=};
(-10,-4)*{\scriptstyle 1};(0,-4)*{\scriptstyle 2};(10,-4)*{\scriptstyle 3};
\endxy$$
$$\ad_q \genE_2\cdot \ad_q \genE_3 \cdot \ad_q \genE_3\cdot \ad_q \genE_2(\genE_1)=
  	\ad_q \genE_3\cdot \ad_q \genE_2 \cdot \ad_q \genE_3\cdot \ad_q \genE_2(\genE_1).$$

\item[(G3)] For
$$\xy
{\ar@3{-}(-5,0)*{\otimes};(5,0)*{\otimes}};(-5,0)*{\otimes};(0,7)*{\textup\fullmoon}**\dir{=};(0,7)*{\textup\fullmoon};(5,0)*{\otimes}**\dir{-};
(-8,0)*{\scriptstyle 1};(0,10)*{\scriptstyle 2};(8,0)*{\scriptstyle 3};
\endxy$$
$$\ad_q \genE_1\cdot \ad_q \genE_2 (\genE_3)-[2]\ad_q \genE_2 \cdot \ad_q \genE_1(\genE_3)=0.$$

\item[($D\al$)]  For
$$\xy
(-5,0)*{\otimes};(0,7)*{\otimes}**\dir{-};
(-5,0)*{\otimes};(5,0)*{\otimes}**\dir{-};
(0,7)*{\otimes};(5,0)*{\otimes}**\dir{-};
(-8,0)*{\scriptstyle 1};(0,10)*{\scriptstyle 2};(8,0)*{\scriptstyle 3};
(4,4)*{\scriptstyle \al};(0,-2)*{\scriptstyle -1-\al};
\endxy$$  				
$$[\alpha+1]\ad_q \genE_1\cdot \ad_q \genE_3 (\genE_2)+[\alpha]\ad_q \genE_3 \cdot \ad_q \genE_1(\genE_2)=0.$$
\end{enumerate}
\end{prp}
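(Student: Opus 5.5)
Proposition \ref{P:SerreRelations} is cited from \cite[Proposition 2.7]{CHW16}, which rests on Yamane \cite{Ya94,Ya99}. The plan is therefore to recall how it is established there rather than derive it from scratch. The statement asks only that these relations \emph{hold} in ${\ol{U}}_q(\mathfrak{g})$, not that they give a complete presentation. So one would take ${\ol{U}}_q(\mathfrak{g})$ to be the quotient of the algebra defined by \eqref{basic relation} by the radical of the Drinfeld-type Hopf pairing between the positive and negative halves. Each listed relation is then verified by showing that the corresponding element of the free positive part lies in that radical.

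The first step is to set up the bosonized picture. After adjoining the parity element, the free algebra $\mathbf{f}$ on the $\genE_i$ becomes an ordinary twisted bialgebra. It carries a symmetric bilinear form $(\genE_i,\genE_j)=\delta_{ij}(1-q_i^{-2})^{-1}$ (up to normalization), extended by the super-twisted coproduct rule. The next step uses Lusztig-type skew derivations $r_k,{}_k r$ for $k\in\mathbb{I}$, defined by $r_k(\genE_j)=\delta_{kj}$ together with the twisted Leibniz rule carrying the sign $(-1)^{p(k)p(\cdot)}$ and the power $q^{(\alpha_k,\cdot)}$. An element $x$ of positive weight lies in the radical iff $r_k(x)=0$ for all $k$.

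The second step is to compute $r_k$ on $q$-commutators, via the identity
\begin{align*}
r_k(\ad_q u(v))=\ad_q u\,(r_k(v))\ \text{(up to a scalar)}+(\text{term involving } r_k(u)\, v),
\end{align*}
where the second term carries the factor $(1-q^{2(\alpha_k,\beta)})$ or a super-analogue. For (Iso), $(\alpha_i,\alpha_j)=0$ and odd parities make $r_k(\genE_i\genE_j+\genE_j\genE_i)$ vanish directly. For (N-Iso), the classical inductive computation of $r_k(\ad_q \genE_i^{n}(\genE_j))$ goes through. It yields a product $\prod_{t}(1-(-1)^{t p(i)}q_i^{2(t+a_{ij})})$, or its non-isotropic odd analogue, which vanishes at $n=1-a_{ij}$. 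The multi-generator relations (AB), (CD1)--(D$\alpha$) are handled the same way. One applies $r_k$ for each $k$ appearing and uses the fact that the iterated commutators are built from lower-degree pieces whose $r_k$ are already known. The vanishing then reduces to the specific values of $(\alpha_i,\alpha_j)$ read off the Dynkin subdiagram, for instance $(\alpha_j,\alpha_j)=0$ for isotropic $j$ and $sgn_{ij}=-sgn_{jk}$ in (AB).

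The main obstacle is the bookkeeping in the longest relations, (CD2), (F1) and (G1), where nested commutators of degree up to seven occur. For these I would work inside the rank-3 or rank-4 subalgebra and exploit weight-space dimension counts. I would first show that the relevant weight space of ${\ol{U}}_q^+$ has the dimension predicted by the super PBW theorem for the corresponding Lie superalgebra, i.e.\ that the weight is not a root or has smaller multiplicity. Nondegeneracy of the pairing on the quotient then forces the displayed element to vanish, which avoids computing each $r_k$ explicitly. This is essentially Yamane's argument, and I would cite \cite{Ya94,Ya99} for those cases.
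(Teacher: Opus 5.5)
The paper gives no proof here. The proposition is quoted from \cite{CHW16}, and $\ol{U}_q(\mathfrak g)$ is in fact \emph{defined} by imposing these relations, so inside the paper the statement is definitional. Your route is therefore genuinely different, and it targets the real content: that each Serre element lies in the radical of the pairing. That is what the paper actually needs, for well-definedness in Proposition~\ref{pairing}. The paper gets it from the skew-primitivity $\Delta(\theta)=\theta\otimes\varrho^{p(\beta)}+\genK_\beta\otimes\theta$ of Lemma~\ref{Delta}, quoting \cite[Proposition 4.3.1]{Ya94} and computing $t_{3134}$ by hand. This is a stronger coproduct version of your criterion that all $r_k(\theta)$ vanish, and your one-sided check is more economical if membership is all you want.

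Your (Iso) and (N-Iso) computations are correct. The factor $\prod_{t=0}^{n-1}\bigl(1-(-1)^{tp(i)}q_i^{2(t+a_{i,j})}\bigr)$ is right, and it vanishes at $n=1-a_{i,j}$ because $a_{i,j}$ is even for $i\in\mathbb{I}_\niso$.

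One correction: for the multi-generator relations, $r_k(x)$ is generally nonzero in the free algebra and vanishes only modulo lower relations. In (AB), with $r_k$ normalized compatibly with $\ad_q$, one finds $r_i(x)=c\,(1-q^{2(\alpha_i+\alpha_k,\alpha_j)})\genE_j\genE_k\genE_j$ with $c\neq 0$, plus terms containing $\genE_j^2$. The hypothesis gives $(\alpha_i+\alpha_k,\alpha_j)=0$, so only the $\genE_j^2$ terms survive. This parallels the paper's formula for $\Delta(t_{3134})$, which holds only modulo $\genE_3^2$. So the criterion to use is that $r_k(x)$ lies in the ideal generated by relations already proved. That suffices, because $(x,y\genE_k)$ is a multiple of $(r_k(x),y)$.

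The step that would genuinely fail is the dimension count you propose for (CD2), (F1), (G1). Knowing the dimension of the $\beta$-weight space of the positive part says nothing about whether a \emph{particular} element of the free algebra lies in the radical. Nondegeneracy on the quotient holds by construction, so it cannot force your element to vanish. Nor does ``$\beta$ is not a root'' make the weight space zero: its dimension is the super partition function of $\beta$, which is positive for every weight occurring here. For example, $\alpha_i+2\alpha_j+\alpha_k$ in (AB) is the sum of the distinct roots $\alpha_i+\alpha_j$ and $\alpha_j+\alpha_k$. The classical heuristic $[e_j,[e_k,[e_j,e_i]]]\in\mathfrak g_\beta=0$ has no quantum counterpart without a theory of quantum root vectors, which is harder than the statement itself.

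Nor is this Yamane's argument. There, membership is proved by coproduct computations (the ones Lemma~\ref{Delta} quotes), while PBW and dimension counts serve the converse: that these relations generate the whole radical. So for the long relations you must either push the $r_k$ (or coproduct) computation through modulo lower relations, or rely on \cite{Ya94,Ya99,CHW16}, as you ultimately do and as the paper does. With that citation the conclusion stands, but the dimension-count shortcut should not be offered as an argument.
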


\begin{rmk}
By \cite[Proposition 8]{Gee06}, the quantum supergroup ${\ol{U}}_q(\mathfrak{g})$  has the structure of a Hopf superalgebra, with coassociative comultiplication ${\ol{\Delta}}$, counit $\ol{\ep}$ and antipode $\ol{S}$ respectively given by 
\begin{align*}
    &{\ol{\Delta}}(\genE_i)=\genE_i\otimes 1+\genK_i\otimes\genE_i, 
    &\ol{\ep}(\genE_i)=0,\qquad
    &\ol{S}(\genE_i)=-\genK_i^{-1}\genE_i,
    \\
    &{\ol{\Delta}}(\genF_i)=1\otimes \genF_i+\genF_i\otimes\genK'_i,
    &\ol{\ep}(\genF_i)=0,\qquad
    &\ol{S}(\genF_i)=-\genF_i\genK_i^{'-1},\\
    &{\ol{\Delta}}(\genK_i)=\genK_i\otimes\genK_i,
    &\ol{\ep}(\genK_i)=1,\qquad 
    &\ol{S}(\genK_i)=\genK_i^{-1},
    \\    
    &{\ol{\Delta}}(\genK'_i)=\genK'_i\otimes\genK'_i,
    &\ol{\ep}(\genK'_i)=1,\qquad 
    &\ol{S}(\genK'_i)=\genK_i^{'-1}.
\end{align*}
\end{rmk}

Recall from \cite[Section 1.9]{Ya94} and \cite[Section 3.1]{SW25} that the quantum supergroup ${\ol{\mathbf{U}}}:={\ol{U}}_q(\mathfrak{g})\oplus{\ol{U}}_q(\mathfrak{g})\varrho$ is a  Hopf algebra, 
where $\varrho$  denotes an algebra involution of parity $0$ defined on ${\ol{U}}_q(\mathfrak{g})$  by 
\begin{align}
\label{varrho'}
\begin{split}
\varrho(\genK_i)=  \genK_i,\quad 
  \varrho(\genK'_i)&=  \genK'_i,\quad
  \varrho(\genE_i)= (-1)^{p(i)} \genE_i,\quad
\varrho(\genF_i)= (-1)^{p(i)} \genF_i,\quad \mbox{ for } i\in \mathbb{I};\\
\varrho^2&=1,\quad x\cdot \varrho=\varrho\cdot \varrho(x),  \mbox{ for } x\in {\ol{U}}_q(\mathfrak{g}).
\end{split}
  \end{align}
According to \cite[Proposition 1.9.1]{Ya94}, in this paper we consider the Hopf algebra structure on ${\ol{\mathbf{U}}}$ given by  the comultiplication $\Delta$, counit $\ep$ and antipode $S$ defined as follows.
\begin{equation}\label{hopfstructure}
\begin{aligned}
  &\Delta(\genE_i)=E_i\otimes\varrho^{p(i)} +\genK_i\otimes \genE_i, &\ep(\genE_i)=0,\qquad &S(\genE_i)=-\genK_i^{-1}\genE_i\varrho^{p(i)};\\
    &\Delta(\genF_i)=\genF_i\otimes \varrho^{p(i)}\genK'_i+1\otimes \genF_i, &\ep(\genF_i)=0,\qquad
    &S(\genF_i)=-\genF_i\varrho^{p(i)}\genK_i^{'-1};\\
    &\Delta(\genK_i)=\genK_i\otimes \genK_i, &\ep(\genK_i)=1,\qquad &S(\genK_i)=\genK^{-1}_{i};\\
    &\Delta(\genK'_i)=\genK'_i\otimes \genK'_i, &\ep(\genK'_i)=1,\qquad &S(\genK'_i)=\genK^{'-1}_{i};\\
    &\Delta(\varrho)=\varrho\otimes \varrho,&\ep(\varrho)=1,\qquad &S(\varrho)=\varrho.
\end{aligned}
\end{equation}

\begin{rmk}
It is straightforward to verify that ${\ol{\mathbf{U}}}$ is a Hopf algebra with the comultiplication $\Delta$, counit $\ep$ and antipode $S$, which are different from those in \cite[(3.4)]{SW25}.
\end{rmk}

For the universal quantum supergroup 
\begin{align}
\label{defU}
    {\mathbf{U}}:={\ol{U}}_q(\mathfrak{g})\oplus{\ol{U}}_q(\mathfrak{g})\varrho\oplus{\ol{U}}_q(\mathfrak{g})\varrho'\oplus{\ol{U}}_q(\mathfrak{g})\varrho\varrho',
\end{align}
where $\varrho'$ is a second parity-0 involution analogous to $\varrho$, satisfying relations \eqref{varrho'} as $\varrho$ and 
\begin{align}
    \varrho\varrho'=\varrho'\varrho.\label{varrho''}
\end{align}
One verifies that ${\mathbf{U}}$ carries a Hopf algebra structure with comultiplication, counit and antipode given by \eqref{hopfstructure} together with
\begin{align*}
    \Delta(\varrho')=\varrho'\otimes\varrho',\quad
    \ep(\varrho')=1,\quad
    S(\varrho')=\varrho'.
\end{align*}

The quantum supergroup ${{\mathbf{U}}}$ is $\Z\Phi$-graded. The grading is given by 
\begin{align*}
    \mbox{deg}\genK_h=0=\mbox{deg}\genK'_h, \quad \mbox{deg}\varrho=0=\mbox{deg}\varrho',    
    \quad\mbox{deg}\genE_i=\alpha_i,\quad \mbox{deg}\genF_i=-\alpha_i,
\end{align*}
for all $h\in \Z\Phi$ and $i\in\mathbb{I}$.


\begin{rmk}
    The Drinfeld-Jimbo quantum supergroup ${\hat{\mathbf{U}}}:={\hat{\mathbf{U}}}_q(\mathfrak{g})$ is defined as the $\C(q)$-superalgebra generated by $\genK_i$, $\genK^{-1}_i$, $\varrho$, $\genE_i$, $\genF_i$ $(i\in\mathbb{I})$, subject to the relations deduced from  \eqref{basic relation}, \eqref{varrho'}  and \eqref{varrho''} with  $\genK'_i$ replaced by $\genK^{-1}_i$ and $\varrho'$ replaced by $\varrho^{-1}$. We can also view ${\hat{\mathbf{U}}}$ as the quotient superalgebra of ${{\mathbf{U}}}$ modulo the ideal generated by $\varrho\varrho'-1$  and  $\genK_i\genK'_i-1$ ($i\in\mathbb{I}$); see \cite{Dri88}.
\end{rmk}

Let ${\hat{\mathbf{U}}}_q^0$ (resp. ${{\mathbf{U}}}_q^0$) be the sub-superalgebra of ${\hat{\mathbf{U}}}$ (resp. $\mathbf{U}$) generated by $\genK_i, \varrho$ (resp. $\genK_i$, $\genK_i'$, $ \varrho$, $ \varrho'$), and denote by ${\hat{\mathbf{U}}}_q^+$ (resp. ${{\mathbf{U}}}_q^+$) the sub-superalgebra of ${\hat{\mathbf{U}}}$ (resp. $\mathbf{U}$) generated by $\genE_i$, $i\in\mathbb{I}$. Let ${\hat{\mathbf{U}}}_q^-$ (resp. ${{\mathbf{U}}}_q^-$) be the sub-superalgebra of ${\hat{\mathbf{U}}}$ (resp. $\mathbf{U}$) generated by $\genF_i$ $(i\in\mathbb{I})$.
Let ${\hat{\mathbf{U}}}_q^{\geq0}$ (resp. ${\hat{\mathbf{U}}}_q^{\leq0}$) be the sub-superalgebra of ${\hat{\mathbf{U}}}$ generated by $\genE_i$, $\genK_i$, $\varrho$ (resp. $\genF_i$, $\genK_i$, $\varrho$). It is well-known that  the multiplication maps induce isomorphisms 
\begin{align*}
  {\hat{\mathbf{U}}}_q^+\otimes{\hat{\mathbf{U}}}_q^0\cong {\hat{\mathbf{U}}}_q^{\geq0},\qquad 
  {\hat{\mathbf{U}}}_q^-\otimes{\hat{\mathbf{U}}}_q^0\cong {\hat{\mathbf{U}}}_q^{\leq0}.
\end{align*} 
Moreover, the multiplication maps 
\begin{align*}
  {\hat{\mathbf{U}}}_q^-\otimes{\hat{\mathbf{U}}}_q^0\otimes{\hat{\mathbf{U}}}_q^+\cong {\hat{\mathbf{U}}},\qquad  {{\mathbf{U}}}_q^-\otimes{{\mathbf{U}}}_q^0\otimes{{\mathbf{U}}}_q^+\cong {{\mathbf{U}}}
\end{align*}
are  isomorphisms of super vector spaces.

\subsection{{Quantum supersymmetric pairs}}In this paper, we only consider quantum supersymmetric pairs associated with (quasi-split) Satake diagrams containing no black nodes. 

For a Cartan matrix $A=(a_{i,j})_{i,j\in\mathbb{I}}$, let Inv$(A)$ be the group of permutations $\tau$ of the set $\mathbb{I}$ such that $d_i a_{i,j}=d_{\tau i} a_{\tau i, \tau j}$, for all $i,j\in\mathbb{I}$, and $\tau^2=$Id. Then $\tau\in $Inv$(A)$ can be viewed as an involution (which is allowed to be the identity) of the corresponding Dynkin diagram. 

 Following \cite[(3.10)-(3.11)]{SW25}, we have
\begin{align*}
    q_i=\begin{cases}
        q_{\tau i}, &\mbox{ if }~i=\tau i\\
        q_{\tau i}, &\mbox{ if }~i\neq\tau i, i\in \mathbb{I}_\zero\\
        q_{\tau i}^{-1}, &\mbox{ if }~i\neq\tau i, i\in \mathbb{I}_\one\\
    \end{cases}
\end{align*}
and the permutation $\tau$ of $\mathbb{I}$ gives rise to an algebra automorphism of  ${{\mathbf{U}}}$, defined by 
\begin{equation}\label{tau}
    \begin{aligned}
        &\tau(\genE_i)=\begin{cases}
\genE_{i},&\text{ if }\tau i=i\\
(-1)^{p(i)}\genE_{\tau i},&\text{ if }\tau i\neq i,
\end{cases}\\
&\tau(\genF_i)=\genF_{\tau i}, \quad \tau(\genK_i)=\genK_{\tau i}, \quad \tau(\genK'_i)=\genK'_{\tau i},\quad
\tau(\varrho)=\varrho,\quad 
\tau(\varrho')=\varrho',\quad
\mbox{ for all } i\in \mathbb{I}.
    \end{aligned}
\end{equation}

We recall the definition of the quasi-split iquantum supergroup from \cite[Section 4]{SW25}.

\begin{dfn}\label{def:Ui}    
The (quasi-split) iquantum supergroup associated with the super Satake diagram $(\mathbb{I},\tau)$, denoted by $\Ui$, with parameters $\varsigma_i \in \C(q)^* \text{ for }i\in \mathbb{I},$    is the $\C(q)$-subalgebra of {${{\mathbf{U}}}$} generated by    
\begin{align*}        
B_i= \genF_i+\varsigma_i\genE_{\tau i}\genK'_i,\quad \tilde{k}_i=\genK_{i}\genK'_{\tau i}, \quad \tilde{\varrho}=\varrho\varrho'.    
\end{align*}
\end{dfn}
Here the parameters are assumed to satisfy:
\begin{align}
  &\varsigma_i=\varsigma_{\tau i}~ \mbox{ if } a_{i,\tau i}=0 ~\mbox{ and } \tau (\al_i)=\al_{\tau i};\\
  &\varsigma_i=\varsigma_{\tau i}~ \mbox{ if } a_{i,\tau i}\in 2\Z ~\mbox{ and } p(i)=1.
\end{align}

The algebra $\Ui$ is a right coideal subalgebra of ${{\mathbf{U}}}$, i.e., $\Delta(\Ui)\subset \Ui\otimes {\mathbf{U}}$ (see \cite[Proposition 4.2]{SW25}). The pair $({{\mathbf{U}}}, \Ui)$ is called a quantum supersymmetric pair associated with $(\mathbb{I},\tau)$. If $\tau=$Id, the pair is said to be $split$.

\subsection{\texorpdfstring{$\mathrm i$}{i}Hopf algebras}
\label{s2-4}Assume that $H=(H,m,\eta,\Delta,\ep,S)$ is a Hopf algebra with multiplication $m$, unit $\eta$, comultiplication $\Delta$, counit $\ep$ and antipode $S$. We assume throughout that the antipode $S$ is invertible. We write $ab=m(a,b)$ and use the Sweedler notation $\Delta(a)=\sum a_{(1)}\otimes a_{(2)}$ for $a,b\in H$. Define  $\Delta^{(2)}:=(\Delta\otimes\Id)\cdot\Delta $, and then $\Delta^{(l)}$ by iteration. Denote 
	$\Delta^{(l)}(a)=\sum a_{(1)}\otimes a_{(2)}\otimes\cdots\otimes a_{(l+1)}$. 


    Let $H,H'$ be two $\C(q)$-Hopf algebras with the comultiplication $\Delta_H,\Delta_{H'}$, counit $\ep_H,\ep_{H'}$ and antipode $S_H,S_{H'}$, respectively. Let $\psi(-,-):H{\otimes}{H'}\to \C(q)$ be a $\C(q)$-bilinear form. Recall from \cite[1.4]{Ya94} that $\psi(-,-)$ is a Hopf pairing if $\psi(-,-)$ satisfies the following relations:
    \begin{align}
    \label{hopfpairing}
    \begin{split}
        &(1)\quad \psi( a_{1} a_2,b)=\psi( a_1\otimes a_2,\Delta_{{H'}}(b)),\quad        
        \psi( a,b_1 b_2)=\psi(\Delta_{H}(a),b_1\otimes b_2),\\        
       &(2)\quad \psi (a,S_{H'}(b))=\psi (S_H(a),b),\\        
       &(3)\quad \psi(\eta_H(1),b)=\ep_{H'}(b),\quad \psi( a,\eta_{H'}(1))=\ep_H(a),    
    \end{split}
        \end{align}
    where $a,a_1,a_2\in H$ and $b,b_1,b_2\in {H'}$.
    A Hopf pairing of $H$ and itself will  simply be called a Hopf pairing of $H$.

Following \cite{CLPRW25}, for a Hopf algebra $H$ with a Hopf pairing $\psi$, we can define the associated iHopf algebra $H^\imath$, which is the same vector space as $H$, equipped with a new multiplication: 
\begin{equation}
    \begin{aligned}
     a\circ b=\sum \psi( b_{(2)},a_{(1)}  ) a_{(2)} b_{(1)}.
    \end{aligned}
\end{equation}

Let $H\otimes H$ be the tensor product Hopf algebra endowed with 
\begin{align*}
    &(a\otimes b)(c\otimes d)=ac\otimes bd,\\
    &\Delta(a\otimes b)=\sum a_{(1)}\otimes b_{(1)}\otimes a_{(2)}\otimes b_{(2)},\\
    &\ep(a\otimes b)=\ep(a)\ep(b), ~\mbox{ for any }~ a,b\in H.
\end{align*}
Define a (twisted) Hopf pairing $\psi_{*}$ of $H\otimes H$ by 
\begin{align*}
    \psi_{*}(c\otimes d, a\otimes b)=\psi(a,d)\psi(c,b).
\end{align*}
Then there is an iHopf algebra $(H\otimes H)^{\imath}$ associated with $(H\otimes H,\psi_{*})$, called the iHopf algebra of diagonal type. Explicitly, the multiplication is given by 
\begin{align*}
    (a\otimes b)\circ(c\otimes d)=\sum \psi(a_{(1)},d_{(2)})\cdot \psi(c_{(2)},b_{(1)})\cdot a_{(2)}c_{(1)}\otimes b_{(2)}d_{(1)},~\mbox{for any}~a,b,c,d\in H.
\end{align*}
Moreover, $(H\otimes H)^{\imath}$ is itself a Hopf algebra.
\begin{lem}\cite[Proposition 2.7]{CLPRW25}
The algebra $((H\otimes H)^{\imath},\circ,1)$ is a Hopf algebra with comultiplication, counit and antipode given by
\begin{align*}
    &\Delta^{\imath}(a\otimes b)=\psi(a_{(2)},b_{(2)})\cdot(a_{(1)}\otimes b_{(3)})\otimes(a_{(3)}\otimes b_{(1)}),\\
    &\ep^{\imath}(a\otimes b)=\psi(a,S^{-1}(b)),\\
    &S^{\imath}(a\otimes b)= S(a)\otimes S^{-1}(b).
\end{align*}
\end{lem}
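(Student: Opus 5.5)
The plan is to verify directly that $((H\otimes H)^{\imath},\circ,1)$, with the given $\Delta^{\imath},\ep^{\imath},S^{\imath}$, satisfies the Hopf algebra axioms. The tools are the Hopf pairing identities \eqref{hopfpairing} and Sweedler calculus. The cleanest route is to recognize $(H\otimes H)^{\imath}$ as a Drinfeld double. We regard $\psi$ as a skew-pairing between $H$ (first tensor factor) and $H$ (second factor, with opposite coproduct). The twisted multiplication
\[
(a\otimes b)\circ(c\otimes d)=\sum \psi(a_{(1)},d_{(2)})\,\psi(c_{(2)},b_{(1)})\,a_{(2)}c_{(1)}\otimes b_{(2)}d_{(1)}
\]
is the double cross product determined by the commutation rule
\[
(1\otimes b)\circ(c\otimes 1)=\sum \psi(c_{(2)},b_{(1)})\,c_{(1)}\otimes b_{(2)},
\]
while $H\otimes 1$ and $1\otimes H$ are subalgebras with their original products. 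The pairing terms vanish there because $\psi(a,1)=\ep(a)$ and $\psi(1,b)=\ep(b)$.

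\textbf{Step 1: algebra structure.} Show that $\circ$ is associative with unit $1\otimes 1$. Unitality follows from \eqref{hopfpairing}(3). For associativity, expand both $((a\otimes b)\circ(c\otimes d))\circ(e\otimes f)$ and $(a\otimes b)\circ((c\otimes d)\circ(e\otimes f))$. On the product side, use \eqref{hopfpairing}(1) to merge pairings such as $\psi(a_{(1)},d_{(2)})\psi(a_{(2)},f_{(3)})$ into $\psi(a_{(1)},d_{(2)}f_{(3)})$. Then use coassociativity to match the two sides. This is the same computation as for general iHopf algebras in \cite{CLPRW25}, specialized to $\psi_*$.

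\textbf{Step 2: coalgebra and bialgebra.} Coassociativity of $\Delta^{\imath}$ follows from \eqref{hopfpairing}(1) in the form $\psi(a_{(2)},b_{(2)})\psi(a_{(3)},b_{(3)})$-type merging, via $\Delta$ applied inside $\psi$, together with coassociativity of $\Delta$. Counitality uses $\ep^{\imath}(a\otimes b)=\psi(a,S^{-1}(b))$ and the antipode identity for $S^{-1}$ on the opposite coproduct. Multiplicativity of $\Delta^{\imath}$ and $\ep^{\imath}$ is the main obstacle: $\Delta^{\imath}$ has a pairing twist of its own, and several pairing factors must cancel against each other. To keep this manageable, I would check multiplicativity only on generators of the double cross product. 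It suffices to verify that $\Delta^{\imath}$ is multiplicative on the subalgebras $H\otimes 1$ and $1\otimes H$, where it reduces to $\Delta$ and $\Delta^{\mathrm{op}}$ up to the twist, and that it respects the commutation rule above. That last check is a Yetter--Drinfeld-type identity. It follows by applying \eqref{hopfpairing}(1) twice and using $\sum \psi(x_{(1)},S^{-1}(y_{(2)}))\psi(x_{(2)},y_{(1)})=\ep(x)\ep(y)$, which is a consequence of (1)–(3).

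\textbf{Step 3: antipode.} Since $S^{\imath}=S\otimes S^{-1}$, verify $\circ(S^{\imath}\otimes\Id)\Delta^{\imath}=\eta\ep^{\imath}$ and its mirror. Again it suffices to do this on $a\otimes 1$ and $1\otimes b$, because an antipode on a bialgebra is automatically an anti-homomorphism once it exists, and $a\otimes b=(a\otimes 1)\circ(1\otimes b)$. On these elements the pairing contributions collapse by \eqref{hopfpairing}(2),(3), and the identities become the ordinary antipode axioms for $S$ and $S^{-1}$.
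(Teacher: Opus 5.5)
The paper gives no proof of its own here (the lemma is quoted from \cite{CLPRW25}), so your argument has to stand alone. Its central reduction rests on a false identity. Putting $b=c=1$ in the multiplication formula gives $(a\otimes 1)\circ(1\otimes d)=\sum\psi(a_{(1)},d_{(2)})\,a_{(2)}\otimes d_{(1)}$. So in general $a\otimes b\neq(a\otimes 1)\circ(1\otimes b)$; for instance, in this paper $(\varrho\otimes 1)\circ(1\otimes\varrho)=\langle\varrho,\varrho\rangle\,\varrho\otimes\varrho=-\varrho\otimes\varrho$. The cross term $\psi(a_{(1)},d_{(2)})$ couples the two outer tensor factors. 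Hence $(H\otimes H)^{\imath}$ is a double cross product of $H\otimes 1$ and $1\otimes H$ only after this change of variables, not under the naive identification you use.

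This invalidates the reductions in Steps 2 and 3. Combined with your factorization, they would even prove the wrong formulas:
\begin{itemize}
\item multiplicativity would force $\ep^{\imath}(a\otimes b)=\ep(a)\ep(b)$ instead of $\psi(a,S^{-1}(b))$; already for $a=b=\genK_i$ with $i\in\mathbb{I}_\zero$ these are $1$ and $q_i^{-2}$;
\item anti-multiplicativity would force $S^{\imath}(a\otimes b)=(1\otimes S^{-1}(b))\circ(S(a)\otimes 1)=\sum\psi(a_{(1)},b_{(2)})\,S(a_{(2)})\otimes S^{-1}(b_{(1)})$ instead of $S(a)\otimes S^{-1}(b)$.
\end{itemize}
So the verification you skip is exactly the one that tests the twist $\psi(a_{(2)},b_{(2)})$ in $\Delta^{\imath}$ and the formula for $\ep^{\imath}$. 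Separately, ``an antipode is automatically an anti-homomorphism once it exists'' is circular for a candidate map. To reduce the antipode axioms to generators, you must first prove that $S\otimes S^{-1}$ reverses $\circ$.

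The strategy can be repaired as follows.
\begin{itemize}
\item Generation holds via the correct factorization $a\otimes b=\sum\psi\big(a_{(1)},S^{-1}(b_{(2)})\big)\,(a_{(2)}\otimes 1)\circ(1\otimes b_{(1)})$, which uses $S^{-1}(b_{(2)})b_{(1)}=\ep(b)$.
\item Besides the checks on $H\otimes 1$, on $1\otimes H$ and on the commutation rule, you need $\Delta^{\imath}\big((a\otimes 1)\circ(1\otimes b)\big)=\Delta^{\imath}(a\otimes 1)\circ\Delta^{\imath}(1\otimes b)$. Both sides equal $\sum\psi(a_{(1)},b_{(4)})\,\psi(a_{(3)},b_{(2)})\,(a_{(2)}\otimes b_{(3)})\otimes(a_{(4)}\otimes b_{(1)})$.
\item Likewise $\ep^{\imath}\big((a\otimes 1)\circ(1\otimes b)\big)=\psi\big(a,b_{(2)}S^{-1}(b_{(1)})\big)=\ep(a)\ep(b)$.
\item The identity $S^{\imath}(x\circ y)=S^{\imath}(y)\circ S^{\imath}(x)$ holds for all $x,y$. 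It follows by a direct Sweedler computation from $\psi(S(a),S^{-1}(b))=\psi(a,b)$ (a consequence of \eqref{hopfpairing}(2)) and the fact that $S^{\pm1}$ reverse the coproduct.
\end{itemize}
Alternatively, drop the generator reduction and verify multiplicativity of $\Delta^{\imath}$ and $\ep^{\imath}$ directly on arbitrary $a\otimes b$, $c\otimes d$, with the same bookkeeping you use for associativity.
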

 
\section{Hopf pairing on the Borel quantum supergroup ${\hat{\mathbf{U}}}_q^{\geq0}$}\label{Hopfpair}

In this section, we study the Hopf pairing on the Borel quantum supergroup. 
The results of this section, as well as those of Section 4, apply to both basic and affine quantum supergroups.

Let ${\dot{\mathbf{U}}}_q^{\geq0}$ be a $\C(q)$-superalgebra generated by $\genE_i,\genK_i,\varrho$ with relations \eqref{basic relation}-\eqref{varrho'}. Denote by $\Theta(\Gamma)\subset {\dot{\mathbf{U}}}_q^{\geq0}$ the set of all elements corresponding to the higher quantum Serre relations given in Proposition \ref{P:SerreRelations}.

\begin{lem}\label{Delta}
    Let $\mathcal{F}$ be the ideal of ${\dot{\mathbf{U}}}_q^{\geq0}$ generated  by the elements in $\Theta(\Gamma)$. Then for any $\theta\in\Theta(\Gamma)$
    with $\beta=\mathrm{deg}(\theta)$, we have
    $$\Delta(\theta)=\theta\otimes\varrho^{p(\beta)}+\genK_\beta\otimes\theta.$$ Consequently,  $$\Delta(\mathcal{F})\subset \mathcal{F}\otimes{\dot{\mathbf{U}}}_q^{\geq0}+{\dot{\mathbf{U}}}_q^{\geq0}\otimes\mathcal{F}.$$
\end{lem}
\begin{proof}
   From \cite[Proposition~4.3.1]{Ya94}, we know that the lemma holds for types A-D. It remains to verify that it also holds for the exceptional cases $F(3|1)$, $G(3)$, and $D(2|1;\al)$.
   Here we consider the case of $F(3|1)$ given by
$$\xy
(-5,0)*{\otimes};(0,7)*{\fullmoon}**\dir{-};(-5,0)*{\otimes};(5,0)*{\otimes}**\dir{=};(0,7)*{\fullmoon}**\dir{-};(5,0)*{\otimes}**\dir{-};(15,0)*{\fullmoon}**\dir{=};(10,0)*{<};
(-5,-4)*{\scriptstyle 1};(0,10)*{\scriptstyle 2};(5,-4)*{\scriptstyle 3};(15,-4)*{\scriptstyle 4};
\endxy$$
The other cases can be proved similarly.

In this case, the ideal $\mathcal{F}$ is generated by the elements 
\begin{align*}
    &s_{2\al_1}=\genE_1^2,\quad s_{2\al_3}=\genE_3^2,\quad  s_{\al_1+2\al_2}=\ad_q \genE_2\cdot \ad_q \genE_2(\genE_1),\\
    &s_{\al_3+2\al_2}=\ad_q \genE_2\cdot \ad_q \genE_2(\genE_3),\quad
    s_{\al_3+2\al_4}=\ad_q \genE_4\cdot \ad_q \genE_4(\genE_3),\\    &s_{\al_1+\al_2+\al_3}=\ad_q \genE_1\cdot \ad_q \genE_3 (\genE_2)-\ad_q \genE_3\cdot \ad_q \genE_1(\genE_2),
\end{align*}
and 
\begin{align*}
    t_{3134}=\ad_q \genE_3\cdot\ad_{q}\genE_1\cdot \ad_{q} \genE_3 (\genE_4).
\end{align*}
By \cite[Proposition~4.3.1]{Ya94}, for any $s_\beta$ we have
\begin{align*}
\Delta(s_\beta)=s_\beta\otimes\varrho^{p(\beta)}+\genK_\beta\otimes s_\beta.
\end{align*}
It therefore remains to compute $\Delta(t_{3134})$. By a direct computation, we obtain 
\begin{align*}
    \Delta(\ad_{q} \genE_3 (\genE_4))= \ad_{q} \genE_3 (\genE_4)\otimes\varrho+\genK_3\genK_4\otimes \ad_{q} \genE_3 (\genE_4)+(q_3^{-a_{3,4}}-q_3^{a_{3,4}})\genK_4\genE_3\otimes\varrho\genE_4
\end{align*}
and
\begin{align*}
    \Delta(\ad_q \genE_1\cdot\ad_{q} \genE_3 (\genE_4))=&\big(\ad_q \genE_1\cdot\ad_{q} \genE_3 (\genE_4)\big)\otimes1\\
    &-(q_1^{-a_{1,3}}-q_1^{a_{1,3}})\genK_3\genK_4\genE_1\otimes\ad_q\genE_3(\genE_4)\varrho\\
    &+(q_3^{-a_{3,4}}
    -q_3^{a_{3,4}})\genK_4\ad_q\genE_1(\genE_3)\otimes\genE_4\\
    &+\genK_1\genK_3\genK_4\otimes \big(\ad_q \genE_1\cdot\ad_{q} \genE_3 (\genE_4)\big).
\end{align*}
Let $\mathcal{F}'$ be the ideal generated by $s_{2\al_3}$.
We have $\Delta(\mathcal{F}')\subset \mathcal{F}'\otimes{\dot{\mathbf{U}}}_q^{\geq0}+{\dot{\mathbf{U}}}_q^{\geq0}\otimes\mathcal{F}'$ and
$$\ad_q \genE_3\cdot\ad_q\genE_3(\genE_4)\equiv\ad_q \genE_3\cdot\ad_q\genE_3(\genE_1)\equiv 0~ (\mbox{modulo} ~\mathcal{F}').$$
Note that $q_1^{a_{1,3}}=q_3^{a_{3,1}}=q_3^{-a_{3,4}}$.
We obtain 
\begin{align*}
\Delta(t_{3134})
&=t_{3134}\otimes\varrho+\genK_1\genK_3^2\genK_4\otimes t_{3134}\\
&\quad-(q_3^{-a_{3,4}}-q_3^{a_{3,4}})\genK_4\cdot\ad_q(\ad_q\genE_1(\genE_3))(\genE_3)\otimes\genE_4\varrho\\
&\quad+(q_1^{-a_{1,3}}-q_1^{a_{1,3}})\genK_3\genK_4\cdot\ad_q\genE_1(\genE_3)\otimes\ad_q\genE_3(\genE_4)\\
&\quad+(q_3^{-a_{3,4}}-q_3^{a_{3,4}})\genK_3\genK_4\cdot\ad_q\genE_1(\genE_3)\otimes\ad_q\genE_3(\genE_4)\\
&\quad-(q_1^{-a_{1,3}}-q_1^{a_{1,3}})\genK_3^2\genK_4\genE_1\otimes\ad_q\genE_3\cdot \ad_q\genE_3(\genE_4)\varrho\\
&\equiv t_{3134}\otimes\varrho+\genK_1\genK_3^2\genK_4\otimes t_{3134}\quad (\mbox{modulo}~ \Delta(\mathcal{F}'))\\
&=t_{3134}\otimes\varrho^{p(\al_1+2\al_3+\al_4)}+\genK_{\al_1+2\al_3+\al_4}\otimes t_{3134}.
\end{align*}
The result now follows from $\deg(t_{3134})=\al_1+2\al_3+\al_4$.
\end{proof}


Note that ${\hat{\mathbf{U}}}_q^{\geq0}\cong{\dot{\mathbf{U}}}_q^{\geq0}/\mathcal{F} $. Using the pairing given in \cite[Proposition 4.3]{LWY22}, we obtain a pairing on the Borel quantum supergroup ${\hat{\mathbf{U}}}_q^{\geq0}$ as follows.
\begin{prp}\label{pairing}
   {There is a unique non-degenerate Hopf pairing on ${\hat{\mathbf{U}}}_q^{\geq0}$ with}
    \begin{align}
    \label{hopfpair}
    \begin{split}
       &\langle\genK_i,\genK_j\rangle =q_i^{a_{i,j}}, \quad\langle\genE_i,\genE_j\rangle=-\frac{\delta_{i,j}}{q_i-q_i^{-1}},\quad \langle\genK_i,\genE_j\rangle=0,\\
       & {\langle \varrho, \varrho \rangle=-1}, \quad \langle \varrho,\genK_i\rangle =1,\quad \langle\varrho,\genE_i\rangle=0.  
    \end{split}
    \end{align}
\end{prp}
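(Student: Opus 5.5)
We will build the pairing in three stages: first on the free algebra ${\dot{\mathbf{U}}}_q^{\geq0}$, then descend it to the quotient ${\hat{\mathbf{U}}}_q^{\geq0}\cong{\dot{\mathbf{U}}}_q^{\geq0}/\mathcal{F}$, and finally check non-degeneracy.

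\emph{Existence on ${\dot{\mathbf{U}}}_q^{\geq0}$.} Let $\mathcal{G}$ be the group generated by the $\genK_i^{\pm1}$ and $\varrho$. Define a bicharacter on $\mathcal{G}$ by $\langle\genK_i,\genK_j\rangle=q_i^{a_{i,j}}$, $\langle\varrho,\varrho\rangle=-1$ and $\langle\varrho,\genK_i\rangle=\langle\genK_i,\varrho\rangle=1$. Since $d_ia_{i,j}=d_ja_{j,i}$, this form is symmetric.
\begin{itemize}
\item On the first argument, we extend using the free algebra on the $\genE_i$ tensored with the group algebra of $\mathcal{G}$. The rule $\langle a_1a_2,b\rangle=\langle a_1\otimes a_2,\Delta(b)\rangle$ then forces a unique value on monomials, as in Lusztig's and Yamane's construction \cite[1.4]{Ya94}.
\item The real content is compatibility with the defining relations $\genK_i\genE_j=q_i^{a_{i,j}}\genE_j\genK_i$ and $x\varrho=\varrho\,\varrho(x)$, in both arguments. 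For instance, pairing $\genK_i\genE_j$ and $q_i^{a_{i,j}}\genE_j\genK_i$ against $\Delta(\genE_k)=\genE_k\otimes\varrho^{p(k)}+\genK_k\otimes\genE_k$ yields $\delta_{jk}\langle\genK_i,\genK_j\rangle\langle\genE_j,\genE_j\rangle$ and $q_i^{a_{i,j}}\delta_{jk}\langle\genE_j,\genE_j\rangle\langle\genK_i,\varrho^{p(k)}\rangle$, which agree.
\item The $\varrho$-relation $\genE_i\varrho=(-1)^{p(i)}\varrho\genE_i$ is checked similarly. Here the value $\langle\varrho,\varrho\rangle=-1$ is exactly what produces the sign $(-1)^{p(i)}$.
\end{itemize}
The antipode and counit conditions in \eqref{hopfpairing} need only be checked on generators, since both sides are (anti)multiplicative. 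Alternatively, we can invoke \cite[Proposition 4.3]{LWY22}, which supplies this pairing between the corresponding Borel parts, and transport it to ${\dot{\mathbf{U}}}_q^{\geq0}$ via the obvious identification.

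\emph{Descent to ${\hat{\mathbf{U}}}_q^{\geq0}$.} We must show that $\mathcal{F}$ lies in the radical on both sides. By Lemma \ref{Delta}, each $\theta\in\Theta(\Gamma)$ of degree $\beta$ satisfies $\Delta(\theta)=\theta\otimes\varrho^{p(\beta)}+\genK_\beta\otimes\theta$, so $\theta$ is skew-primitive.
\begin{itemize}
\item Pairing $\theta$ with a product $b_1b_2$, and inducting on degree, reduces everything to $\langle\theta,\genE_j\rangle$, $\langle\theta,\genK\rangle$ and $\langle\theta,\varrho\rangle$.
\item All of these vanish for degree reasons: the pairing respects the $\Z\Phi$-grading, and $\beta$ has height at least $2$.
\end{itemize}
Hence $\langle\theta,-\rangle=0$, and symmetrically $\langle-,\theta\rangle=0$. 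Because $\Delta(\mathcal{F})\subset\mathcal{F}\otimes{\dot{\mathbf{U}}}_q^{\geq0}+{\dot{\mathbf{U}}}_q^{\geq0}\otimes\mathcal{F}$, the multiplicativity of the pairing then gives $\langle\mathcal{F},-\rangle=0=\langle-,\mathcal{F}\rangle$. The pairing therefore descends to the quotient.

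\emph{Non-degeneracy.} This is the main obstacle. I would use the triangular decomposition ${\hat{\mathbf{U}}}_q^{\geq0}\cong{\hat{\mathbf{U}}}_q^+\otimes{\hat{\mathbf{U}}}_q^0$, which gives the factorization $\langle xg,yh\rangle=\langle x,y\rangle\langle g,h\rangle$ up to a nonzero scalar, for $x,y$ in ${\hat{\mathbf{U}}}_q^+$ and $g,h\in\mathcal{G}$. Non-degeneracy then splits into two parts.
\begin{itemize}
\item On ${\hat{\mathbf{U}}}_q^0$ it amounts to the bicharacter separating group elements. This holds because $q$ is generic, the matrix $(d_ia_{i,j})$ is non-degenerate for basic type, and $\langle\varrho,\varrho\rangle=-1$ separates $\varrho$ from $1$.
\item On ${\hat{\mathbf{U}}}_q^+$ it is precisely the statement that the Serre relations of Proposition \ref{P:SerreRelations} generate the full radical. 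This is Yamane's theorem \cite{Ya94} (see also \cite[Proposition 4.3]{LWY22}), which I would cite.
\end{itemize}

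Uniqueness is immediate: the generators determine the pairing via \eqref{hopfpairing}.
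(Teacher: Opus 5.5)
Your proposal is correct, to the same standard as the paper, which also relies on a citation for the $\varrho$-free part. The non-degeneracy step, however, follows a different route.

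On existence the difference is minor. The paper actually takes what you list as the alternative. It starts from the pairing of \cite{LWY22} on $\ol{U}_q(\mathfrak g)^{\ge0}$ and checks by hand that the $\genK$--$\genE$ relations, the $\varrho$-relations and the Serre elements also pair to zero against words containing $\varrho$. This is exactly what your ``transport via the obvious identification'' would still owe, since \cite{LWY22} has no $\varrho$. The paper's treatment of $\Theta(\Gamma)$, via Lemma~\ref{Delta} and vanishing for degree reasons, is the same as yours.

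For non-degeneracy the paper never separates off the Cartan part. It writes $x=x_0+x_1\varrho$ with $x_0,x_1\in\ol{U}_q(\mathfrak g)^{\ge0}$ and uses $\langle\varrho,b\rangle=\ep(b)=\langle b,\varrho\rangle$ there. This gives $\langle x,y\rangle=\langle x_0+x_1,y\rangle$ and $\langle x,y_0\varrho\rangle=\langle x_0-x_1,y_0\rangle$, so $x_0=x_1=0$ by the cited non-degeneracy on $\ol{U}_q(\mathfrak g)^{\ge0}$. That is a two-line $\Z/2$ argument treating the whole $\varrho$-free Borel as a black box. Your factorization through $\hat{\mathbf U}_q^+\otimes\hat{\mathbf U}_q^0$ needs two inputs: Yamane's theorem on $\hat{\mathbf U}_q^+$, and separation of group elements by the bicharacter on $\hat{\mathbf U}_q^0$. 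In exchange, it makes visible exactly where $\langle\varrho,\varrho\rangle=-1$ and the Cartan data are used.

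Two points need tightening.

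\textbf{The scalar in your factorization.} For $x,y\in\hat{\mathbf U}_q^+$ of degree $\mu$ one has $\langle xg,yh\rangle=\langle x,y\rangle\langle g,h\rangle\,\sigma(g)\sigma(h)$ with $\sigma=\langle-,\varrho^{p(\mu)}\rangle$. So the ``nonzero scalar'' depends on $g$ and $h$. Because it is a character twist, it does not affect non-degeneracy, but you should say so.

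\textbf{Non-degeneracy of the Cartan form.} The matrix $(d_ia_{i,j})$ is \emph{not} non-degenerate for every basic type. It is degenerate for $A(n,n)$: for $A(1,1)$ with the distinguished diagram, $\alpha_1+2\alpha_2+\alpha_3$ spans its radical. Then $\genK_1\genK_2^2\genK_3-1$ lies in the radical of the pairing. This is a defect of the statement itself in that case, which the paper's proof hides inside the citation of \cite{LWY22}. It is therefore not a respect in which your argument is weaker than the paper's.
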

\begin{proof}

First, we show that the pairing $\langle-,-\rangle$ defined by \eqref{hopfpair} is well-defined on ${\hat{\mathbf{U}}}_q^{\geq0}$, which is equivalent to proving that the ideal generated by the relations of $E_i,K_i,\varrho$ in \eqref{basic relation}, \eqref{varrho'} and the higher order quantum Serre relations in Proposition \ref{P:SerreRelations} belongs to the radical of $\langle-,-\rangle$.

We shall prove $\langle a\theta a',b\rangle=0$ for each generating relation $\theta$ and $a,a',b\in {\hat{\mathbf{U}}}_q^{\geq0}$.
Note that\[
\langle a\theta a',b\rangle=\sum\langle a,b_{(1)}\rangle\langle \theta ,b_{(2)}\rangle\langle  a',b_{(3)}\rangle.
\]
Hence it suffices to prove $\langle \theta,b\rangle=0$ for any $b\in {\hat{\mathbf{U}}}_q^{\geq0}$. Since $b$ can be expressed as a sum of elements with the form $b_1b_2\cdots b_k$ for $k\in\mathbb{N}$ and $b_j\in\{\genE_i,\genK_i,\varrho\ |\ i\in \mathbb{I}\}$, we have
\[\langle \theta,b\rangle=
\langle \theta,b_1\cdots b_k\rangle=\langle\theta_{(1)},b_1\rangle\cdots\langle\theta_{(k)},b_k\rangle.
\]

We now consider the case where $\theta$ is taken to be one of the relations involving $E_i,K_i$ from \eqref{basic relation}, or one of the higher-order quantum Serre relations. Denote by ${\ol{U}}_q(\mathfrak{g})^{\ge0}$ the subalgebra of ${\hat{\mathbf{U}}}_q^{\geq0}$ generated by $K_i,E_i(i\in\mathbb{I})$. Hence, by \cite[Proposition 4.3]{LWY22}, we obtain a {Hopf pairing} of ${\ol{U}}_q(\mathfrak{g})^{\ge0}$ given by
\begin{align*}
\langle\genK_i,\genK_j\rangle =q_i^{a_{i,j}}, \quad\langle\genE_i,\genE_j\rangle=-\frac{\delta_{i,j}}{q_i-q_i^{-1}},\quad \langle\genK_i,\genE_j\rangle=0.
\end{align*}
It remains to verify the case that $\varrho$ occurs in $b$. 

{If $\theta\in\{\genK_i \genE_j-q_i^{a_{ij}}\genE_j \genK_i,\genK_i \genK_j-\genK_j \genK_i\}$}, we have the equations
  \begin{align*}
       \langle \genK_i \genE_j, \varrho  \rangle
      =0
      =\langle q_i^{a_{ij}}\genE_j \genK_i, \varrho  \rangle,\qquad\langle \genK_i \genK_j, \varrho  \rangle
      =1
      =\langle \genK_j \genK_i, \varrho  \rangle.
  \end{align*}
 The same argument then yields $\langle a\theta a',b\rangle=0$.

If $\theta\in \Theta(\Gamma)$, then by Lemma \ref{Delta} we see that
\begin{align*}   
\Delta(\theta)=\theta\otimes\varrho^{p(\beta)}+\genK_\beta\otimes\theta,
\end{align*} 
where $\beta=\mbox{deg}(\theta)$. Then for $\Delta^{(l)}(\theta)=\sum\theta_{(1)}\otimes{\cdots\otimes \theta_{(l+1)}}$, there must be one $\theta_{(s)}=\theta$ for each term being summed.
By a direct computation, it follows
    \begin{align*}
     &\langle \prod\limits_{u=1}^{l}\genE_{i_u}, \genK_{i_0} \rangle=0,\quad\langle \prod\limits_{u=1}^{l}\genE_{i_u}, \varrho \rangle=0,\\
        &\langle \prod\limits_{u=1}^{l}\genE_{i_u}, \genE_{i_0} \rangle
        =\langle \prod\limits_{u=1}^{l-1}\genE_{i_u}, \genK_{i_0} \rangle\langle  \genE_{i_{l}}, \genE_{i_0} \rangle=0,
    \end{align*}
 for any $l>1,i_0,i_1,\cdots i_l\in\mathbb{I}$. 
 
 {If $\theta\in\{\genK_i \varrho-\varrho\genK_i, \genE_i \varrho-\varrho\cdot\varrho(\genE_i),\varrho^2-1\} $,}
in each tensor summand of $\Delta^{(k)}(\theta)$,
there is a unique tensor factor $\theta_{(i)}$ equal to $\theta$. A direct calculation shows that
    \begin{align*}
    &\langle \genK_i \varrho, \genE_j  \rangle
      =0
      =\langle \varrho\genK_i , \genE_j  \rangle,
      \qquad\langle \genK_i \varrho, \genK_j  \rangle
      =\langle \genK_i , \genK_j  \rangle
      =\langle \varrho\genK_i , \genK_j  \rangle,
      \\&\langle \genK_i \varrho, \varrho  \rangle
      =\langle \varrho , \varrho  \rangle
      =\langle \varrho\genK_i , \varrho  \rangle;\\
      &\langle \genE_i \varrho, \genE_i  \rangle
      {=\langle \genE_i, \genE_i  \rangle\langle\varrho,\varrho^{p(i)}\rangle
      =(-1)^{p(i)}\langle \genE_i, \genE_i  \rangle=\langle\varrho,\genK_i\rangle\langle \varrho(\genE_i), \genE_i  \rangle}
      =\langle \varrho\cdot\varrho(\genE_i) , \genE_i  \rangle,\\
      &\langle \genE_i \varrho, \genE_j  \rangle
      {=\langle \genE_i, \genE_j  \rangle\langle\varrho,\varrho^{p(j)}\rangle
      =0=\langle\varrho,\genK_j\rangle\langle \varrho(\genE_i), \genE_j  \rangle}
      =\langle \varrho\cdot\varrho(\genE_i) , \genE_j  \rangle~ \mbox{for}~ i\neq j,\\
      &\langle \genE_i \varrho, \genK_j  \rangle
      =0
      =\langle \varrho\cdot\varrho(\genE_i) , \genK_j  \rangle,\qquad\langle \genE_i \varrho, \varrho  \rangle
      =0
      =\langle \varrho\cdot\varrho(\genE_i) , \varrho  \rangle;
       \\&\langle \varrho \varrho, \genE_i  \rangle
      =0
      =\langle 1 , \genE_i  \rangle,
      \qquad\langle \varrho \varrho, \genK_i  \rangle
      =1
      =\langle 1, \genK_i  \rangle,\qquad
      \langle \varrho \varrho, \varrho  \rangle
      =1
      =\langle 1, \varrho  \rangle.    
\end{align*}
It follows that $\langle\theta_{(i)},b_i\rangle=0$.  
 Hence, $\theta$ belongs to the radical of $\langle-,-\rangle$.
 
     Next, we verify the relations (2)-(3) given in \eqref{hopfpairing}. A check on the generators is sufficient.
    \begin{align*}
     &\langle \genE_i, S(\genE_j)\rangle
     =\langle\genE_i,-\genK_j^{-1}\genE_j\varrho^{p(j)}\rangle
     =\langle-\genK_i^{-1}\genE_i\varrho^{p(i)},\genE_j\rangle
     =\langle S(\genE_i),\genE_j\rangle,\\
     &\langle \genE_i,S(\genK_j)\rangle
     =\langle\genE_i,\genK_j^{-1}\rangle
     =0
     =\langle-\genK_i^{-1}\genE_i\varrho^{p(i)},\genK_j\rangle
     =\langle S(\genE_i),\genK_j\rangle,\\
     &\langle \genE_i,S(\varrho)\rangle
     =\langle\genE_i,\varrho\rangle
     =0
     =\langle-\genK_i^{-1}\genE_i\varrho^{p(i)},\varrho\rangle
     =\langle S(\genE_i),\varrho\rangle,\\
     &\langle \genK_i,S(\genK_j)\rangle
     =\langle\genK_i,\genK_j^{-1}\rangle
     =q_i^{-a_{i,j}}
     =\langle\genK_i^{-1},\genK_j\rangle
     =\langle S(\genK_i),\genK_j\rangle,\\
      &\langle \genK_i,S(\varrho)\rangle
     =\langle\genK_i,\varrho\rangle
     =1
     =\langle\genK_i^{-1},\varrho\rangle
     =\langle S(\genK_i),\varrho\rangle,\\
 &\langle \varrho,S(\varrho)\rangle
     =\langle\varrho,\varrho\rangle
     =\langle S(\varrho),\varrho\rangle;\\
     &\langle 1,\genE_i\rangle
     =\langle \varrho^2,\genE_i\rangle=0=\ep(\genE_i),\\
     &\langle 1,\genK_i\rangle
     =\langle \varrho^2,\genK_i\rangle=1=\ep(\genK_i),\\
     &\langle 1,\varrho\rangle
     =\langle \varrho^2,\varrho\rangle=\langle \varrho,\varrho\rangle^2=1=\ep(\varrho).
    \end{align*}
 
It remains to prove that the Hopf pairing is non-degenerate. Note that every  $x\in {\hat{\mathbf{U}}}_q^{\geq0}$ admits a unique decomposition $x=x_0+x_1\varrho$, with $x_0,x_1\in {\ol{U}}_q(\mathfrak{g})^{\ge0}$. Moreover, for any $b\in {\ol{U}}_q(\mathfrak{g})^{\ge0}$, we see that 
\begin{align*}
    \langle\varrho,b\rangle=\ep(b)=\langle b,\varrho\rangle.
\end{align*}
Now assume that $x\in {\hat{\mathbf{U}}}_q^{\geq0}$ satisfies $\langle x,y\rangle=0$ for all $y\in {\hat{\mathbf{U}}}_q^{\geq0}$. We show that $x=0$. 

Write $x=x_0+x_1\varrho$ with $x_0,x_1\in {\ol{U}}_q(\mathfrak{g})^{\ge0}$. For any $y\in {\ol{U}}_q(\mathfrak{g})^{\ge0}$, we obtain
\begin{align*}
    0=\langle x,y\rangle=\langle x_0,y\rangle+\langle x_1\varrho,y\rangle=\langle x_0,y\rangle+\langle x_1,y_{(1)}\rangle\ep(y_{(2)})=\langle x_0+x_1,y\rangle.
\end{align*}
Since $y\in {\ol{U}}_q(\mathfrak{g})^{\ge0}$ is arbitrary and the pairing on ${\ol{U}}_q(\mathfrak{g})^{\ge0}$ is non-degenerate (see \cite[Proposition 4.3]{LWY22}), we get 
\begin{align}\label{plus}
    x_0 +x_1=0.
\end{align}
For $y=y_0\varrho\in {\ol{U}}_q(\mathfrak{g})^{\ge0}\varrho$ with $y_0\in {\ol{U}}_q(\mathfrak{g})^{\ge0}$, we have 
\begin{align*}
    0&=\langle x,y_0\varrho\rangle\\&=\langle x_0,y_0\varrho\rangle+\langle x_1\varrho,y_0\varrho\rangle
    \\&=\langle x_0,y_0\rangle+\langle (x_1)_{(1)},(y_0)_{(1)}\rangle\langle (x_1)_{(2)},\varrho\rangle\langle \varrho,(y_0)_{(2)}\rangle\langle \varrho,\varrho\rangle
    \\&=\langle x_0,y_0\rangle-\langle x_1,y_0\rangle\\&=\langle x_0-x_1,y_0\rangle.
\end{align*}
As $y_0\in {\ol{U}}_q(\mathfrak{g})^{\ge0}$ is arbitrary, the non-degeneracy on ${\ol{U}}_q(\mathfrak{g})^{\ge0}$ implies 
\begin{align}\label{minus}
    x_0 -x_1=0.
\end{align}
Combining \eqref{plus} and \eqref{minus}, we obtain $x_0=x_1=0$, hence $x=0$. This proves that the left radical of the pairing is trivial; the right radical is treated analogously. Therefore the Hopf pairing is non-degenerate.
    \end{proof}

\section{\texorpdfstring{$\mathrm i$}{i}Hopf algebras associated with ${\hat{\mathbf{U}}}_q^{\geq0}$}\label{iHopf}

We now consider the iHopf algebra $({\hat{\mathbf{U}}}_q^{\geq0})^\imath_\tau$ associated with ${\hat{\mathbf{U}}}_q^{\geq0}$ and the Hopf pairing $\langle\tau(-),-\rangle$, i.e., for any $a,b \in ({\hat{\mathbf{U}}}_q^{\geq0})^\imath_\tau$, the multiplication is given by
\begin{align}\label{new product}
   a\circ b=\sum \langle \tau(b_{(2)}),a_{(1)}  \rangle a_{(2)} b_{(1)},  
 \end{align}
where $\Delta(a)=\sum a_{(1)}\otimes a_{(2)}$, $\Delta(b)=\sum b_{(1)}\otimes b_{(2)}$. Using the defining relations \eqref{basic relation} and \eqref{varrho'}, we obtain the following relations.


\begin{prp}\label{KE} 
    The following relations hold in $({\hat{\mathbf{U}}}_q^{\geq0})_\tau^\imath$:
    \begin{align*}
      &\genK_i\circ\genK_j
    =\genK_j\circ\genK_i,\quad
    \genK_i\circ\genK_i^{-1}=q_i^{-a_{i,\tau i}}
    =\genK_i^{-1}\circ\genK_i,\\    
    &\genK_i\circ\genE_j
    =q_i^{a_{i, j}-a_{i,\tau j}}\genE_j\circ\genK_i,\quad {\varrho\circ \varrho=-1,\quad x\circ \varrho=\varrho\circ x}, \mbox{ for } x\in ({\hat{\mathbf{U}}}_q^{\geq0})_\tau^\imath.
    \end{align*}
\end{prp}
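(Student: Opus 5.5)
The plan is to verify every relation directly from the definition \eqref{new product} of $\circ$. We need only three inputs: the coproducts
\[
\Delta(\genK_i)=\genK_i\otimes\genK_i,\qquad \Delta(\varrho)=\varrho\otimes\varrho,\qquad \Delta(\genE_j)=\genE_j\otimes\varrho^{p(j)}+\genK_j\otimes\genE_j,
\]
the pairing values of Proposition \ref{pairing}, and the symmetry $d_{\tau i}a_{\tau i,\tau j}=d_ia_{i,j}$ for $\tau\in\mathrm{Inv}(A)$. Since $\langle-,-\rangle$ has weight zero, $\langle\tau(\genE_j),\genK_i\rangle=0$ and $\langle\tau(\genE_j),\varrho\rangle=0$. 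Signs coming from $\tau(\genE_j)=\pm\genE_{\tau j}$ therefore never contribute. We also write $\langle\genK_{\tau j},\genK_i\rangle=q^{d_{\tau j}a_{\tau j,i}}$ and never need to compare $q_i$ with $q_{\tau i}$.

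\emph{Cartan relations.} For group-likes, $\genK_i\circ\genK_j=\langle\genK_{\tau j},\genK_i\rangle\genK_i\genK_j=q^{d_{\tau j}a_{\tau j,i}}\genK_i\genK_j$. By $\tau$-invariance and symmetrizability, $d_{\tau j}a_{\tau j,i}=d_ja_{j,\tau i}=d_{\tau i}a_{\tau i,j}$, which equals the scalar appearing in $\genK_j\circ\genK_i$. Hence the two products agree.

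For the inverse, property (1) of \eqref{hopfpairing} applied to $\genK_{\tau i}\genK_{\tau i}^{-1}=1$ gives $\langle\genK_{\tau i}^{-1},\genK_i\rangle=\langle\genK_{\tau i},\genK_i\rangle^{-1}$; alternatively, use (2) with $S(\genK)=\genK^{-1}$. Thus $\genK_i\circ\genK_i^{-1}=q^{-d_{\tau i}a_{\tau i,i}}=q_i^{-a_{i,\tau i}}$. The same computation gives the same value for $\genK_i^{-1}\circ\genK_i$.

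\emph{$\genK$--$\genE$ relation.} With $a=\genK_i$, only the summand $\genE_j\otimes\varrho^{p(j)}$ of $\Delta(\genE_j)$ survives, and $\langle\varrho^{p(j)},\genK_i\rangle=1$. Hence $\genK_i\circ\genE_j=\genK_i\genE_j$.

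With $a=\genE_j$ and $b=\genK_i$, the summand with $a_{(1)}=\genE_j$ pairs to zero against $\genK_{\tau i}$. The surviving summand gives
\[
\genE_j\circ\genK_i=\langle\genK_{\tau i},\genK_j\rangle\,\genE_j\genK_i=q^{d_{\tau i}a_{\tau i,j}}\,q_i^{-a_{i,j}}\,\genK_i\genE_j .
\]
Since $d_{\tau i}a_{\tau i,j}=d_ia_{i,\tau j}$, this yields $\genK_i\circ\genE_j=q_i^{a_{i,j}-a_{i,\tau j}}\genE_j\circ\genK_i$.

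\emph{The element $\varrho$.} First, $\varrho\circ\varrho=\langle\varrho,\varrho\rangle\varrho^2=-1$. For centrality, set $\chi=\langle\varrho,-\rangle$. By Hopf pairing property (1) and $\Delta(\varrho)=\varrho\otimes\varrho$, $\chi$ is an algebra character with $\chi(\genE)=0$, $\chi(\genK)=1$, $\chi(\varrho)=-1$. Similarly $\langle\tau(-),\varrho\rangle=\chi\circ\tau=\chi$, because $\tau$ fixes $\varrho$, permutes the $\genK$'s, and sends $\genE$'s to multiples of $\genE$'s. Therefore
\[
x\circ\varrho=\sum\chi(x_{(1)})\,x_{(2)}\varrho,\qquad \varrho\circ x=\sum\chi(x_{(2)})\,\varrho x_{(1)} .
\]
On generators: for $\genE_j$, the first formula gives $\genE_j\varrho$, and the second gives $(-1)^{p(j)}\varrho\genE_j=\genE_j\varrho$ by \eqref{varrho'}. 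For $\genK_i$ both give $\genK_i\varrho$, and for $\varrho$ both give $-1$.

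To pass from generators to all $x$, I use two facts. First, $\circ$ is associative; this is the iHopf construction of \cite{CLPRW25}, valid since $\langle\tau(-),-\rangle$ is a Hopf pairing because $\tau$ is a Hopf automorphism. Second, $\genE_i,\genK_i^{\pm1},\varrho$ generate $({\hat{\mathbf{U}}}_q^{\geq0})^\imath_\tau$. For the second fact, argue by induction on height: a $\circ$-monomial in these generators equals the corresponding ordinary monomial plus terms of strictly lower $\genE$-degree, using $\Delta(\genE_j)$ above. The step needing the most care is precisely this generation/induction argument, together with the bookkeeping of $\tau$-twisted exponents. As a fallback, one can avoid it altogether: write $x=x_0+x_1\varrho$ with $x_0,x_1$ monomials in $\genE,\genK$ and compare the two Sweedler formulas directly. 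Both sides reduce to $x\varrho$ with the sign $(-1)^{p(\deg x)}$ absorbed by moving $\varrho$ across.
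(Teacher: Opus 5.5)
Your proposal is correct and follows the paper's route: evaluate $\circ$ directly on generators using $\Delta(\genK_i)$, $\Delta(\genE_j)$, $\Delta(\varrho)$ and the pairing values, and match exponents via $d_ia_{i,j}=d_{\tau i}a_{\tau i,\tau j}$ together with symmetrizability. Your derivation of $\langle\genK_{\tau i}^{-1},\genK_i\rangle$ from the pairing axioms, and your extension of $x\circ\varrho=\varrho\circ x$ from generators to all $x$ (via the character $\langle\varrho,-\rangle$, or your direct Sweedler comparison), supply details the paper leaves as ``verified similarly.''
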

\begin{proof}
    Applying  \eqref{hopfstructure}, \eqref{new product} and Proposition \ref{pairing}, a direct computation yields
    \begin{align*}
    \genK_i\circ\genK_j
    &=\langle\tau(\genK_j),\genK_i\rangle\genK_i\genK_j =q_i^{a_{i,\tau j}}\genK_i\genK_j=q_j^{a_{j,\tau i}}\genK_j\genK_i=\langle\tau(\genK_i),\genK_j\rangle\genK_j\genK_i=
    \genK_j\circ\genK_i,
      \\
      \genK_i\circ\genE_j
      &=\langle\varrho^{p(j)},\genK_i\rangle\genK_i\genE_j +\langle\tau(\genE_j),\genK_i\rangle\genK_j\genK_i
      =\genK_i\genE_j, \\
    \genE_j\circ\genK_i
    &=\langle\tau(\genK_i),\genE_j\rangle\varrho^{p(j)}\genK_i +\langle\tau(\genK_i),\genK_j\rangle\genE_j\genK_i
    =q_i^{a_{i,\tau j}}\genE_j\genK_i=q_i^{a_{i,\tau j}-a_{i,j}}\genK_i\genE_j,\\
    \varrho\circ\genE_i&=\langle\varrho^{p(i)},\varrho\rangle\varrho\cdot\genE_i +\langle\tau(\genE_i),\varrho\rangle\genK_i\varrho
      =(-1)^{p(i)}\varrho\cdot\genE_i,  \\    
      \genE_i\circ \varrho
      &=\langle\tau\varrho,\genE_i\rangle\varrho^{p(i)}\varrho +\langle\tau\varrho,\genK_i\rangle\genE_i\varrho
      =\genE_i\varrho=(-1)^{p(i)}\varrho\cdot\genE_i.
    \end{align*}
    The remaining relations can be verified similarly, and we omit the detail.
\end{proof}

To derive the other relations in 
$({\hat{\mathbf{U}}}_q^{\geq0})_\tau^\imath$, we rewrite the product of the generators $\genE_i$ ($i\in\mathbb{I}$)  under the multiplication in \eqref{new product}.

\begin{exa}\label{EE}
(1) For any $i,j\in \mathbb{I}$, we have
 \begin{align*}
        \genE_i\circ\genE_j
        &=\langle \varrho^{p(j)},\genE_i\rangle\varrho^{p(i)}\genE_j
        +\langle \tau(\genE_j),\genE_i\rangle\varrho^{p(i)}\genK_j
        +\langle \varrho^{p(j)},\genK_i\rangle\genE_i\genE_j
        +\langle \tau(\genE_j),\genK_i\rangle\genE_i\genK_j   \\    
        &=\genE_i\genE_j+\langle \tau(\genE_j),\genE_i\rangle\varrho^{p(i)}\genK_j.
    \end{align*}
   Since $\varrho^{p(i)}\circ\genK_j=\varrho^{p(i)}\genK_j$, we obtain 
   \begin{align*}
      \genE_i\genE_j= \genE_i\circ\genE_j-\langle \tau(\genE_j),\genE_i\rangle\varrho^{p(i)}\circ\genK_j.
   \end{align*}
(2) It is obvious that 
$$\genE_i\genE_j\circ\genE_k= \genE_i\circ\genE_j\circ\genE_k-\langle \tau(\genE_j),\genE_i\rangle\varrho^{p(i)}\circ\genK_j\circ\genE_k. $$
Using $$\Delta(\genE_i\genE_j)
    =\genE_i\genE_j\otimes\varrho^{p(i)}\varrho^{p(j)}+\genE_i\genK_j\otimes\varrho^{p(i)}\genE_j +\genK_i\genE_j\otimes\genE_i\varrho^{p(j)}+\genK_i\genK_j\otimes\genE_i\genE_j$$ and Proposition \ref{KE}, we have 
    \begin{align*}
     &\genE_i\genE_j\circ\genE_k
     \\
     =&\langle \varrho^{p(k)},\genK_i\genK_j\rangle\genE_i\genE_j\genE_k+\langle \tau(\genE_k),\genK_i\genE_j\rangle\genE_i\varrho^{p(j)}\genK_k+\langle \tau(\genE_k),\genE_i\genK_j\rangle\varrho^{p(i)}\genE_j\genK_k \\
     =&\genE_i\genE_j\genE_k
     +q_i^{a_{i,\tau k}}\langle \tau(\genE_k), \genE_j\rangle \genE_i\varrho^{p(j)}\genK_k
     +\langle \tau(\genE_k), \genE_i\rangle\varrho^{p(i)}\genE_j\genK_k\\
     =&\genE_i\genE_j\genE_k+ q_i^{a_{i,\tau k}-a_{i,k}}\langle \tau(\genE_k), \genE_j\rangle\varrho^{p(j)}\circ\genK_k\circ\genE_i+ {(-1)^{p(i)p(j)}}q_j^{-a_{j,k}}\langle \tau(\genE_k), \genE_i\rangle\varrho^{p(i)}\circ\genK_k\circ\genE_j.
    \end{align*}
    Hence,
    \begin{align*}
   \genE_i\genE_j\genE_k=\genE_i&\circ\genE_j\circ\genE_k   
   - {(-1)^{p(i)p(j)}}q_j^{-a_{j,k}}\langle \tau(\genE_k), \genE_i\rangle\varrho^{p(i)}\circ\genK_k\circ\genE_j\\&
   - q_i^{a_{i,\tau k}-a_{i,k}}\langle \tau(\genE_k), \genE_j\rangle\varrho^{p(j)}\circ\genK_k\circ\genE_i -\langle \tau(\genE_j),\genE_i\rangle\varrho^{p(i)}\circ\genK_j\circ\genE_k.
    \end{align*}

\end{exa}

Let $I=(i_1,\cdots,i_{l})\in {\mathbb{I}}^l$ be an ordered \(l\)-tuple of elements of \(I\). For notational simplicity,  we
 denote 
 $$E_I:=\genE_{i_1}\genE_{i_2}\cdots \genE_{i_l}=\prod\limits_{u=1}^l \genE_{i_u}, \quad E_I^{\circ}:=\genE_{i_1}\circ\genE_{i_2}\circ\cdots \circ\genE_{i_l}=\mathop{\circ\prod}\limits_{u=1}^l \genE_{i_u}.$$
 
\begin{prp}\label{new1}
{Suppose that $I=(i_1,\cdots,i_l)\in {\mathbb{I}}^l$. We have}
    \begin{align*}
   {E_I=E^{\circ}_{I}-\sum\limits_{v=2}^{l}\sum\limits_{u<v}d(u,v)\cdot(\prod\limits_{\substack{1\leq k<v\\k\neq u}}\genE_{i_k})\circ(\varrho^{p(i_u)}\circ\genK_{i_v})\circ(\mathop{\circ\prod}\limits_{k=v+1}^l\genE_{i_k}}),
     \end{align*}
   where $d(u,v)=\big(\prod\limits_{u<k<v}(-1)^{p(i_u)p(i_k)}\cdot q_{i_k}^{-a_{i_k,\tau i_{ v}}}\big)\cdot \langle \tau(\genE_{i_v}),\genE_{i_u}\rangle.$  
\end{prp}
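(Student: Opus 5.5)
The plan is induction on $l$. The engine is a single-step formula for $E_{I'}\circ\genE_j$, where $I'=(i_1,\dots,i_{l-1})$ and $j=i_l$; its content is that $E_{I'}E_j$ differs from $E_{I'}\circ\genE_j$ only by the terms with $v=l$ in the statement. The base case $l=1$ is trivial. The case $l=2$ is Example \ref{EE}(1), and Example \ref{EE}(2) serves as a consistency check for $l=3$.

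\emph{Step 1 (one-step formula).} Write $a=E_{I'}$. By \eqref{hopfstructure} and multiplicativity of $\Delta$,
\[
\Delta(a)=\sum_{S\subseteq\{1,\dots,l-1\}} (\text{factor }\genE_{i_k}\text{ for }k\in S,\ \genK_{i_k}\text{ otherwise})\otimes(\text{factor }\varrho^{p(i_k)}\text{ for }k\in S,\ \genE_{i_k}\text{ otherwise}),
\]
with products taken in the order $k=1,\dots,l-1$, and $\Delta(\genE_j)=\genE_j\otimes\varrho^{p(j)}+\genK_j\otimes\genE_j$. Insert these into \eqref{new product}.
\begin{itemize}
\item In the summand with $b_{(2)}=\varrho^{p(j)}$, the pairing $\langle\varrho^{p(j)},a_{(1)}\rangle$ kills every $a_{(1)}$ containing some $\genE$ (Proposition \ref{pairing}), and it equals $1$ on products of $\genK$'s. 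Only $S=\emptyset$ survives, which contributes exactly $E_{I'}\genE_j$.
\item In the summand with $b_{(2)}=\genE_j$, degree considerations force $a_{(1)}$ to have degree one, i.e.\ $S=\{u\}$. For each $u$ this gives
\[
\big\langle\tau(\genE_j),\textstyle\prod_{k<u}\genK_{i_k}\,\genE_{i_u}\prod_{k>u}\genK_{i_k}\big\rangle\,\Big(\prod_{k<u}\genE_{i_k}\Big)\varrho^{p(i_u)}\Big(\prod_{u<k<l}\genE_{i_k}\Big)\genK_j .
\]
\end{itemize}
The pairing is evaluated by expanding $\Delta^{(l-2)}(\tau\genE_j)$. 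The $\genK$'s to the left of $\genE_{i_u}$ pair with $\genK_{\tau j}$, and those to the right pair with $\varrho^{p(j)}$. The result is $\langle\tau\genE_j,\genE_{i_u}\rangle$ times a product of factors $q^{(\cdot)}$ indexed by $k<u$.

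\emph{Step 2 (normal form).} Move $\varrho^{p(i_u)}$ to the right past $\genE_{i_k}$ for $u<k<l$, using \eqref{varrho'}; this produces the signs $(-1)^{p(i_u)p(i_k)}$. Then convert the ordinary product $Y\cdot(\varrho^{p(i_u)}\genK_j)$, with $Y=\prod_{k\ne u}\genE_{i_k}$, into $Y\circ(\varrho^{p(i_u)}\circ\genK_j)$. Since $\varrho^{p}\genK_j$ is grouplike, $Y\circ(\varrho^p\genK_j)=\langle\tau(\varrho^p\genK_j),Y_{(1)}\rangle\,Y_{(2)}\varrho^p\genK_j$, and only $Y_{(1)}=\prod\genK$ survives. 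This gives the scalar $\prod_{k\neq u}q_{i_k}^{a_{i_k,\tau j}}$. We also use $\varrho^p\circ\genK_j=\varrho^p\genK_j$ (Proposition \ref{KE}). Combining this scalar with the $q$-powers from Step 1, the factors with $k<u$ should cancel. The factors with $u<k<l$ should remain as $q_{i_k}^{-a_{i_k,\tau j}}$. This yields
\[
E_{I'}\genE_j=E_{I'}\circ\genE_j-\sum_{u<l}d(u,l)\Big(\prod_{k<l,k\ne u}\genE_{i_k}\Big)\circ(\varrho^{p(i_u)}\circ\genK_{i_l}).
\]
I expect this exponent and sign bookkeeping to be the main obstacle: keeping track of $\tau$ inside the pairing, the asymmetry between $q_i^{a_{i,\tau j}}$ and $q_{\tau j}^{a_{\tau j,i}}$, and the order of tensor legs. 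I would check it first against the coefficient $q_j^{-a_{j,k}}$ (and its sign) in Example \ref{EE}(2), where $\tau$-symmetry of the Cartan data is used.

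\emph{Step 3 (induction).} Write $E_I=E_{I'}\genE_{i_l}$ and apply Step 1 to obtain the $v=l$ terms. For the remaining term $E_{I'}\circ\genE_{i_l}$, substitute the induction hypothesis for $E_{I'}$. By associativity of $\circ$ (the iHopf algebra is associative, \cite{CLPRW25}), each term $(\cdots)\circ(\varrho^{p(i_u)}\circ\genK_{i_v})\circ(\mathop{\circ\prod}_{k=v+1}^{l-1}\genE_{i_k})$ followed by $\circ\,\genE_{i_l}$ becomes the corresponding term with $v<l$ in the statement, with unchanged coefficient $d(u,v)$. Collecting the terms with $v<l$ and $v=l$ gives the stated formula.
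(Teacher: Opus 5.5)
Your proposal is correct and is essentially the paper's proof. Step 1 is the paper's computation (f1) of $E_{I'}\circ\genE_{i_l}$. Step 2 is its grouplike conversion (f2), whose $k<u$ factors cancel against those of (f1) to leave exactly $d(u,l)$. Step 3 is the same substitution of the induction hypothesis into $E_{I'}\circ\genE_{i_l}$.

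The asymmetry you worried about is harmless, since $\langle\genK_{\tau j},\genK_{i}\rangle=q_{\tau j}^{a_{\tau j,i}}=q_{i}^{a_{i,\tau j}}$ by symmetrizability. Starting the induction at $l=1$, rather than at the paper's base cases $l=2,3$ from Example \ref{EE}, also works.
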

\begin{proof}
Assume that $I{'}=(i_1,\cdots,i_{l-1})\in {\mathbb{I}}^{l-1}$. By \eqref{new product} and Proposition \ref{pairing}, a direct calculation shows that 
 \begin{align}
  &E_{I'}\circ \genE_{i_l}\nonumber\\
  =&\langle \varrho^{p(i_l)},\genK_{i_1}\genK_{i_2}\cdots\genK_{i_{l-1}}\rangle  \genE_{i_1}\cdots\genE_{i_{l-1}}\genE_{i_l}\nonumber\\&\qquad
  +\sum\limits_{{j}=1}^{l-1}\langle \tau(\genE_{i_l}), 
  (\prod\limits_{u=1}^{j-1}
  \genK_{i_u}) \cdot \genE_{i_{j}}\cdot
   (\prod\limits_{u=j+1}^{l-1}
 \genK_{i_u})\rangle 
(\prod\limits_{u=1}^{j-1}
  \genE_{i_u}) \cdot \varrho^{p(i_{j})}\cdot
   (\prod\limits_{u=j+1}^{l-1}
 \genE_{i_u})\cdot \genK_{i_l}\nonumber\\
  =&E_{I}+\sum\limits_{{j}=1}^{l-1}\big(\prod\limits_{u=1}^{{j}-1} \langle\tau(\genK_{i_l}), \genK_{i_{u}}\rangle \big)\langle\tau(\genE_{i_l}),\genE_{i_{j}} \rangle\cdot(-1)^{p(i_j)\sum\limits_{u=j+1}^{l-1}p(i_u)}\cdot\big(\prod\limits_{\substack{1\leq u<l\\u\neq j}}\genE_{i_u}\big)\cdot\varrho^{p(i_{j})}\genK_{i_l}\nonumber\\
  =&E_{I}+\sum\limits_{{j}=1}^{l-1}\big(\prod\limits_{u=1}^{{j}-1}
  q_{i_u}^{a_{{i_u},\tau{i_l}}}\big)\cdot
  \langle\tau(\genE_{i_l}),\genE_{i_{j}} \rangle
  \cdot(-1)^{p(i_j)\sum\limits_{u=j+1}^{l-1}p(i_u)}\cdot
  \big(\prod\limits_{\substack{1\leq u<l\\u\neq j}}\genE_{i_u}\big)\cdot\varrho^{p(i_{j})}\genK_{i_l}.\label{f1}
 \end{align}
 By Proposition \ref{KE}, we obtain
     \begin{align}
\big(\prod\limits_{\substack{1\leq u<l\\u\neq j}}\genE_{i_u}\big)\circ(\varrho^{p(i_{j})}\genK_{i_l})
&=\langle\tau(\varrho^{p(i_{j})}\genK_{i_l}),\prod\limits_{\substack{1\leq u<l\\u\neq j}}\genK_{i_u}\rangle\cdot\big(\prod\limits_{\substack{1\leq u<l\\u\neq j}}\genE_{i_u}\big)\cdot\varrho^{p(i_{j})}\genK_{i_l}\nonumber\\
&=(\prod\limits_{\substack{1\leq u<l\\u\neq j}}q_{i_u}^{a_{i_u,\tau i_l}})\cdot      
\big(\prod\limits_{\substack{1\leq u<l\\u\neq j}}\genE_{i_u}\big)\cdot\varrho^{p(i_{j})}\genK_{i_l}.\label{f2}
     \end{align}
    We now prove the proposition by induction on $l$. Example~\ref{EE} establishes the cases for $l=2,3$. For $l>3$, assume inductively that the proposition holds for all smaller values of $l$. The induction hypothesis yields
    \begin{align*}
        E_{I'}\circ \genE_{i_l}
        &=E^{\circ}_{I'}\circ E_{i_{l}}-\sum\limits_{v=2}^{l-1}\sum\limits_{u<v}d(u,v)\cdot(\prod\limits_{\substack{1\leq k<v\\k\neq u}}\genE_{i_k})\circ(\varrho^{p(i_u)}\circ\genK_{i_v})\circ(\mathop{\circ\prod}\limits_{k=v+1}^{l}\genE_{i_k})\\
        &=E^{\circ}_{I}-\sum\limits_{v=2}^{l-1}\sum\limits_{u<v}d(u,v)\cdot(\prod\limits_{\substack{1\leq k<v\\k\neq u}}\genE_{i_k})\circ(\varrho^{p(i_u)}\circ\genK_{i_v})\circ(\mathop{\circ\prod}\limits_{k=v+1}^{l}\genE_{i_k}).
    \end{align*}
     Then by \eqref{f1}-\eqref{f2} and $\varrho\circ\genK_i=\varrho\genK_i$, we have 
     \begin{align*}
      E_{I}
      &= E_{I'}\circ \genE_{i_l}- \sum\limits_{{j}=1}^{l-1}\big(\prod\limits_{u=1}^{{j}-1}
  q_{i_u}^{a_{{i_u},\tau{i_l}}}\big)\cdot
  \langle\tau(\genE_{i_l}),\genE_{i_{j}} \rangle
  \cdot(-1)^{p(i_j)\sum\limits_{u=j+1}^{l-1}p(i_u)}\cdot
  \big(\prod\limits_{\substack{1\leq u<l\\u\neq j}}\genE_{i_u}\big)\cdot\varrho^{p(i_{j})}\genK_{i_l}\\
  &=E_{I'}\circ \genE_{i_l}-\sum\limits_{{j}=1}^{l-1}
\big(\prod\limits_{u=j+1}^{l-1}(-1)^{p(i_j)p(i_u)}
  q_{i_u}^{-a_{{i_u},\tau{i_l}}}\big)\cdot  
  \langle\tau(\genE_{i_l}),\genE_{i_{j}} \rangle
  \cdot
  \big(\prod\limits_{\substack{1\leq u<l\\u\neq j}}\genE_{i_u}\big)\circ(\varrho^{p(i_{j})}\genK_{i_l})\\
  &=E^{\circ}_{I}-\sum\limits_{v=2}^{l-1}\sum\limits_{u<v}d(u,v)\cdot(\prod\limits_{\substack{1\leq k<v\\k\neq u}}\genE_{i_k})\circ(\varrho^{p(i_u)}\circ\genK_{i_v})\circ(\mathop{\circ\prod}\limits_{k=v+1}^{l}\genE_{i_k})\\&\quad\qquad-\sum\limits_{{j}=1}^{l-1}
  d(j,l)
  \big(\prod\limits_{\substack{1\leq u<l\\u\neq j}}\genE_{i_u}\big)\circ(\varrho^{p(i_{j})}\circ\genK_{i_l})\\
  &=E^{\circ}_{I}-\sum\limits_{v=2}^{l}\sum\limits_{u<v}d(u,v)\cdot(\prod\limits_{\substack{1\leq k<v\\k\neq u}}\genE_{i_k})\circ(\varrho^{p(i_u)}\circ\genK_{i_v})\circ(\mathop{\circ\prod}\limits_{k=v+1}^{l}\genE_{i_k})
     \end{align*}
     as desired.
\end{proof}

 Set $S=\{1,2,\cdots,l\}$. For any $1\leq s\leq \lfloor \frac{l}{2} \rfloor$, let $M$ be a set consisting of $s$ ordered pairs
 \begin{align}\label{def:m}
     M=\{(x_j,y_j)\ |\ x_j,y_j\in S,x_j<y_j, j=1,\cdots s\},
 \end{align}
satisfying:
\begin{itemize}
     \item[-] Distinct vertices: the $2s$  elements $x_1,y_1,x_2,y_2,\cdots,x_s,y_s$
   are pairwise distinct with $y_1<y_2<\cdots<y_s$;
    \item[-] Order preservation: $(x_u,y_u)>(x_v,y_v)$ whenever $y_u>y_v$.
\end{itemize}
Denote by $\mathcal{M}_{s}(S)$ the set of all such $M$. Put $T(M)=\{x_1,y_1,x_2,y_2,\cdots,x_s,y_s\}$. 
  For any $M\in\mathcal{M}_{s}(S)$, let 
 \begin{align*}
    E_{I^M}^{\circ}:= \mathop{\circ\prod}\limits_{\substack{1\leq u\leq l\\u\notin T(M)}}\genE_{i_u}.
 \end{align*}
{We now introduce the following notation:  for any $(x,y)\in M$, define the coefficient functions by}
 \begin{align}\label{def1}
    &f({x,y}) =p(i_{x})\Big(\sum\limits_{\substack{x<u<y\\u\in S}}p(i_{u})-\sum\limits_{{\substack{x<x'<y<y'\\(x',y')\in M}}}p(i_{x'})\Big),\\
\label{def2}    &g({x,y})=-\sum\limits_{\substack{x<u<y\\u\in S}}a_{i_{y}, i_u} +\sum\limits_{{\substack{x<x'<y<y'\\(x',y')\in M}}}a_{i_{y}, i_{x'}},\\
    \label{def3}&\varphi(x,y)= (-1)^{f({x,y})}\cdot 
    q_{i_{y}}^{g({x,y})}\cdot(\prod\limits_{\substack{u<x\\u\neq x',(x',y')>(x,y)}}q_{i_u}^{a_{i_u,\tau i_y}-a_{i_u, i_y}})\cdot\langle\tau(\genE_{i_y}),\genE_{i_x}\rangle.
\end{align}

 \begin{thm}\label{formula}
     Suppose that $I=(i_1,\cdots,i_l)\in {\mathbb{I}}^l$. Then
     $$E_{I}=E^{\circ}_{I}+o(E^{\circ}_{I}),$$
     where $o(E^{\circ}_{I})$ is given by
     {{\begin{align*}
    \sum\limits_{s=1}^{\lfloor \frac{l}{2} \rfloor}\sum\limits_{M\in \mathcal{M}_s(S)} (-1)^s\Big(\mathop{\circ\prod}\limits_{(x, y)\in M}\varphi(x,y)\cdot \varrho^{p(i_{x})}\circ \genK_{i_{y}}\Big)\circ E_{I^M}^{\circ}.
     \end{align*}}}
 \end{thm}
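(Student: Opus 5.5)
We argue by induction on $l$, iterating Proposition~\ref{new1}. The cases $l=1$ (nothing to prove) and $l=2,3$ (Example~\ref{EE}) serve as the base. Proposition~\ref{new1} expresses $E_I$ as $E_I^\circ$ minus a sum over pairs $u<v$. Each such term has the form $d(u,v)\,E_{I_{u,v}}\circ(\varrho^{p(i_u)}\circ\genK_{i_v})\circ(\mathop{\circ\prod}_{k>v}\genE_{i_k})$, where $E_{I_{u,v}}$ is the \emph{ordinary} product of the $\genE_{i_k}$ with $k<v$, $k\neq u$. This word has length $v-2<l$, so by the induction hypothesis we may rewrite $E_{I_{u,v}}$ as a sum over matchings $M'\in\mathcal{M}_{s'}(\{1,\dots,v-1\}\setminus\{u\})$ of $\circ$-products. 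Substituting gives a triple sum over $(u,v,M')$, each term a $\circ$-product of Cartan factors $\varrho^{p(i_x)}\circ\genK_{i_y}$ and the leftover $\genE$'s.

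The combinatorial step is to identify these triples with the matchings $M=M'\cup\{(u,v)\}\in\mathcal{M}_{s'+1}(S)$. The pair $(u,v)$ is singled out as the one whose $y$-coordinate is the largest among pairs of $M$ with all $\genE_{i_k}$, $k>v$, unmatched. Equivalently, we order the Cartan factors by $y$ as in the definition of $\mathcal{M}_s(S)$, and $(u,v)$ is the last pair. Since $y_1<\dots<y_s$ is imposed, each $M$ arises exactly once, and the sign $(-1)^s$ accumulates one $-1$ per pair.

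Next we move the newly produced factor $\varrho^{p(i_u)}\circ\genK_{i_v}$ to the left, past the unmatched $\genE_{i_k}$ with $k<v$ coming from $E^{\circ}_{I_{u,v}^{M'}}$, so that all Cartan factors stand in front as in the theorem. For this we use Proposition~\ref{KE}: $\genK_i\circ\genE_j=q_i^{a_{i,j}-a_{i,\tau j}}\genE_j\circ\genK_i$, the relation $\varrho\circ\genE_j=(-1)^{p(j)}\genE_j\circ\varrho$ (from the computation there), and the facts that the $\genK$'s and $\varrho$ $\circ$-commute among themselves and $\varrho$ is central.

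The main obstacle is the bookkeeping showing that $d(u,v)$, multiplied by these commutation factors, yields exactly $\varphi(u,v)$ in \eqref{def3}. The factor $d(u,v)$ contains $q_{i_k}^{-a_{i_k,\tau i_v}}$ and signs $(-1)^{p(i_u)p(i_k)}$ for all $u<k<v$. Commuting past the surviving $\genE_{i_k}$ with $k<v$ contributes $q$-powers $q_{i_v}^{\pm a_{i_v,i_k}}$, and we convert between them via $q_{i}^{a_{i,j}}=q_j^{a_{j,i}}$ and the $\tau$-invariance of $d_ia_{i,j}$. Indices $k$ between $u$ and $v$ that are absorbed as $x'$ into later pairs $(x',y')$ with $y'>v$ must be removed from the count; this is the source of the correction terms $\sum_{x<x'<y<y'}$ in $f$ and $g$. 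Indices $k<u$ produce the product $\prod_{u'<x}q_{i_{u'}}^{a_{i_{u'},\tau i_y}-a_{i_{u'},i_y}}$, excluding those $u'$ that are first coordinates of later pairs. We would verify this exponent count carefully on each class of index ($k<u$, $u<k<v$; matched as $x'$ or $y'$ by pairs before or after $(u,v)$; unmatched), and check it against the $l=3$ formula of Example~\ref{EE} as a sanity test. Finally, the coefficients $\varphi(x,y)$ of the pairs of $M'$, computed relative to the shorter index set, have to be shown to agree with those computed in $S$. This holds because removing $u,v$ and the positions after $v$ changes neither the intervals $(x,y)$ for $y<v$ nor the relevant crossing pairs, except through $(u,v)$ itself. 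The case $x<u<y<v$ is the subtle one, and we expect it to be accounted for by the crossing-correction term associated with $(u,v)$.
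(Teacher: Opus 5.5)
Your combinatorial set-up is sound. Proposition~\ref{new1} combined with the induction hypothesis on the prefix word $E_{I_{u,v}}$, taking $(u,v)=(x_s,y_s)$ to be the pair with largest $y$, gives a bijection onto $\bigcup_s\mathcal{M}_s(S)$. Your comparison of $\varphi(x,y)$ for $(x,y)\in M'$, computed on $\{1,\dots,v-1\}\setminus\{u\}$ versus on $S$, is also correct: the crossing term of $(u,v)$ compensates for the missing index $u$.

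The gap is in the coefficient of the last pair. Put $h_k:=q_{i_k}^{a_{i_k,\tau i_v}-a_{i_k,i_v}}$, so that $\genE_{i_k}\circ\genK_{i_v}=h_k\,\genK_{i_v}\circ\genE_{i_k}$. You expand $E_{I_{u,v}}$ first and only afterwards move $\varrho^{p(i_u)}\circ\genK_{i_v}$ to the left. It therefore passes only the unmatched $\genE_{i_k}$, and you obtain
\[
d(u,v)\prod_{k<v,\ k\notin T(M)}h_k .
\]
For the last pair there are no later pairs, hence no corrections. Using $q_{i_v}^{-a_{i_v,i_k}}=q_{i_k}^{-a_{i_k,\tau i_v}}h_k$, formula \eqref{def3} then reads
\[
\varphi(u,v)=d(u,v)\prod_{k<v,\ k\neq u}h_k .
\]
The two differ by $\prod_{k\in T(M')}h_k$, and the individual $h_k$ are not $1$ when $\tau\neq\mathrm{Id}$. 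So the class-by-class exponent check you describe (by position relative to $u$) would not close.

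The missing observation is that this factor is nevertheless $1$ whenever the term is nonzero. Indeed, $\langle\tau(\genE_{i_y}),\genE_{i_x}\rangle\neq0$ forces $i_y=\tau i_x$, and then $h_xh_y=q^{(\alpha_{i_x}+\alpha_{\tau i_x},\,\alpha_{\tau i_v}-\alpha_{i_v})}=1$ by $\tau$-invariance of $(\,,\,)$. You must therefore group the indices of $T(M')$ into the pairs of $M'$.

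Alternatively, follow the paper's order of operations. Commute $\varrho^{p(i_u)}\circ\genK_{i_v}$ past the ordinary product $E_{I_{u,v}}$ before invoking induction. For an ordinary monomial $X$, one has $X\circ\genK_{i_v}=(\prod_k h_k)\,\genK_{i_v}\circ X$ directly from \eqref{new product}, because grouplikes pair trivially with the $\genE$'s. This produces exactly $d(u,v)\prod_{k<v,k\neq u}h_k=\varphi(u,v)$, and only the index-set comparison for $M'$ remains.

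Two further corrections.

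First, the relation $\varrho\circ\genE_j=(-1)^{p(j)}\genE_j\circ\varrho$ that you quote is false. The computation in Proposition~\ref{KE} gives $\varrho\circ\genE_j=(-1)^{p(j)}\varrho\genE_j=\genE_j\circ\varrho$, where the middle term is the ordinary product. Thus $\varrho$ is $\circ$-central. Using your sign rule would introduce a spurious factor $(-1)^{p(i_u)\sum p(i_k)}$, summed over unmatched $k<v$, which does not occur in $f(u,v)$.

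Second, in your decomposition $(u,v)$ is the last pair. So no index between $u$ and $v$ can be absorbed into a later pair $(x',y')$ with $y'>v$. The crossing corrections in $f$ and $g$, and the exclusions in the product over indices below $x$, enter only for the pairs of $M'$, through the change of index set.
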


\begin{proof}
 Assume that $I{'}=(i_1,\cdots,i_{l-1})\in {\mathbb{I}}^{l-1}$. 
 We now prove the theorem by induction on $l$. 
Indeed, Example \ref{EE} verifies the cases $l=2, 3$. Now assume $l>3$; then by induction, we have $$E_{I'}\circ\genE_{i_l}=E^{\circ}_{I'}\circ\genE_{i_l}+o(E^{\circ}_{I'})\circ\genE_{i_l},$$
 where $$o(E^{\circ}_{I'})=
       \sum\limits_{s=1}^{\lfloor \frac{l-1}{2} \rfloor}\sum\limits_{{M'}\in \mathcal{M}_s(S\setminus\{l\})}(-1)^s\Big(\mathop{\circ\prod}\limits_{(x, y)\in {M'}}\varphi'(x,y) 
\varrho^{p(i_{x})}\circ \genK_{i_{y}}\Big)\circ E_{I'^{M'}}^{\circ}.$$

By \eqref{f1}-\eqref{f2}, we have
 \begin{align*}   
 E_{I} 
 &=E_{I'}\circ \genE_{i_l}-\sum\limits_{{j}=1}^{l-1} d(j,l)(
  \prod\limits_{\substack{1\leq u<l\\u\neq j}}\genE_{i_u})\circ(\varrho^{p(i_{j})}\circ\genK_{i_l})\\
&=E^{\circ}_{I'}\circ\genE_{i_l}+o(E^{\circ}_{I'})\circ\genE_{i_l}
 -\sum\limits_{{j}=1}^{l-1} d(j,l) (\prod\limits_{\substack{1\leq u<l\\u\neq j}}q_{i_u}^{a_{i_u,\tau i_l}-a_{i_u,i_l}}) (\varrho^{p(i_{j})}\circ\genK_{i_l})\circ(
  \prod\limits_{\substack{1\leq u<l\\u\neq j}}\genE_{i_u})\\
  &=E^{\circ}_{I}+o(E^{\circ}_{I'})\circ\genE_{i_l}
 -\sum\limits_{{j}=1}^{l-1} \varphi(j,l) (\varrho^{p(i_{j})}\circ\genK_{i_l})\circ(
  \prod\limits_{\substack{1\leq u<l\\u\neq j}}\genE_{i_u}),
 \end{align*} 
 where 
 $$d(j,l)=\big(\prod\limits_{j<k<l}(-1)^{p(i_j)p(i_k)}\cdot q_{i_k}^{-a_{i_k,\tau i_{ l}}}\big)\cdot \langle \tau(\genE_{i_l}),\genE_{i_j}\rangle.$$ 
Then by induction, we obtain 
  {\small\begin{align*}
(\varrho^{p(i_{j})}\circ\genK_{i_l})\circ(\prod\limits_{\substack{1\leq u<l\\u\neq j}}\genE_{i_u})
  =(\varrho^{p(i_{j})}\circ\genK_{i_l})\circ(
  \mathop{\circ\prod}\limits_{\substack{1\leq u<l\\u\neq j}}\genE_{i_u})+(\varrho^{p(i_{j})}\circ\genK_{i_l})\circ o(
  \mathop{\circ\prod}\limits_{\substack{1\leq u<l\\u\neq j}}\genE_{i_u}),
  \end{align*}}
where 
\begin{align*}
 o(
  \mathop{\circ\prod}\limits_{\substack{1\leq u<l\\u\neq j}}\genE_{i_u})
  &=\sum\limits_{s=1}^{\lfloor \frac{l-2}{2} \rfloor}\sum\limits_{{M''}\in \mathcal{M}_s(S \setminus \{j,l\})}(-1)^s\Big(\mathop{\circ\prod}\limits_{(x, y)\in {M''}} \varphi''(x,y) \varrho^{p(i_{x})}\circ \genK_{i_{y}}\Big)\circ E_{I'^{M''}}^\circ.   
\end{align*}

  It follows from the coefficient functions that $$\varphi(j,l)\cdot \prod\limits_{(x, y)\in {M''}} \varphi''(x,y)
  =\prod\limits_{(x, y)\in {M}}\varphi(x,y)$$ 
  for $M''\in \mathcal{M}_s(S \setminus \{j,l\})$, $M=M''\cup \{(j,l)\}$.  Using Proposition \ref{KE}, we have
  { \begin{align*}
o(\varrho KE):
&=-\sum\limits_{{j}=1}^{l-1} \varphi(j,l) (\varrho^{p(i_{j})}\circ\genK_{i_l})\circ(
  \prod\limits_{\substack{1\leq u<l\\u\neq j}}\genE_{i_u})\\
&=-\sum\limits_{{j}=1}^{l-1} \varphi(j,l)(\varrho^{p(i_{j})}\circ\genK_{i_l})\circ(
  \mathop{\circ\prod}\limits_{\substack{1\leq u<l\\u\neq j}}\genE_{i_u})-\sum\limits_{{j}=1}^{l-1} \varphi(j,l)\cdot (\varrho^{p(i_{j})}\circ\genK_{i_l})\circ o(
  \mathop{\circ\prod}\limits_{\substack{1\leq u<l\\u\neq j}}\genE_{i_u})\\
  &= \sum\limits_{s=1}^{\lfloor \frac{l}{2} \rfloor}\sum\limits_{\substack{M\in \mathcal{M}_s(S)\\y_s=l}}(-1)^s\Big(\mathop{\circ\prod}\limits_{{(x, y)\in M}} \varphi(x,y)\cdot 
    \varrho^{p(i_{x})}\circ \genK_{i_{y}}\Big) \circ E_{I^M}^\circ.
   \end{align*}}
   Note that $\varphi(x,y)=\varphi'(x,y)$ for $y\neq l$. We have
   \begin{align*}
    o(E^{\circ}_{I'})\circ\genE_{i_l}&= \sum\limits_{s=1}^{\lfloor \frac{l}{2} \rfloor}\sum\limits_{{{M'}\in \mathcal{M}_s(S\setminus\{l\})}}(-1)^s\Big(\mathop{\circ\prod}\limits_{\substack{(x, y)\in {M'}}} \varphi'(x,y)\cdot 
    \varrho^{p(i_{x})}\circ \genK_{i_{y}}\Big)\circ  E_{I^{M'}}^\circ\\
    &=\sum\limits_{s=1}^{\lfloor \frac{l}{2} \rfloor}\sum\limits_{\substack{{M}\in \mathcal{M}_s(S)\\y_s\neq l}}(-1)^s\Big(\mathop{\circ\prod}\limits_{(x, y)\in M} \varphi(x,y)\cdot 
    \varrho^{p(i_{x})}\circ \genK_{i_{y}}\Big)\circ  E_{I^M}^\circ.  
   \end{align*}
   Thus we obtain
   \begin{align*}
     o(E^{\circ}_{I'})\circ\genE_{i_l}+  o(\varrho KE)=o(E^{\circ}_{I})
   \end{align*}
   as desired.
\end{proof}
\begin{rmk}
  The lower-order terms appearing in the higher order quantum Serre relations can be computed explicitly using Theorem~\ref{formula}; see the last section.
\end{rmk}

\section{Realization of \texorpdfstring{$\mathrm i$}{i}quantum supergroups via \texorpdfstring{$\mathrm i$}{i}Hopf algebras}\label{realization}
In this section, we show that the quasi-split iquantum supergroup of basic type can be realized as the iHopf algebra associated with the Borel quantum supergroup. We further provide the iHopf realization of the quantum supersymmetric pair. Following \cite[Remark 2.4]{SW25}, throughout the remainder of the paper, we assume that $\tau i\neq i$ whenever $i\in \mathbb{I}_\iso$ for any given involution $\tau$.

Recall the iHopf algebra of diagonal type from Section \ref{s2-4}.
The iHopf algebra $({\hat{\mathbf{U}}}_q^{\geq0}\otimes{\hat{\mathbf{U}}}_q^{\geq0})^{\imath}$ is a Hopf algebra with the following operations
\begin{align}\label{multiplication}\begin{split}
    (a\otimes b)\circ(c\otimes d)&=\sum\langle a_{(1)},d_{(2)}\rangle\cdot\langle c_{(2)},b_{(1)}\rangle\cdot a_{(2)}c_{(1)}\otimes b_{(2)}d_{(1)},\\
    \Delta^{\imath}(a\otimes b)&=\langle a_{(2)},b_{(2)}\rangle\cdot(a_{(1)}\otimes b_{(3)})\otimes(a_{(3)}\otimes b_{(1)}),\\
    \ep^{\imath}(a\otimes b)&=\langle a,S^{-1}(b)\rangle,\\
    S^{\imath}(a\otimes b)&= S(a)\otimes S^{-1}(b),
\end{split}
\end{align}
for any $a,b,c,d\in {\hat{\mathbf{U}}}_q^{\geq0}$. 

The iHopf algebra of diagonal type yields the following realization of the quantum supergroup ${\mathbf{U}}$.

\begin{thm}\label{Phi}
    {There is a Hopf algebra {isomorphism}} $\Phi: {{\mathbf{U}}}\to ({\hat{\mathbf{U}}}_q^{\geq0}\otimes{\hat{\mathbf{U}}}_q^{\geq0})^{\imath}$ given by 
    \begin{align*}
        &\genE_i\mapsto \varrho^{p(i)}\genE_i\otimes\varrho^{p(i)},\quad 
        \genF_i\mapsto 1\otimes \genE_i\varrho^{p(i)},\quad 
        \genK_i\mapsto (-1)^{p(i)}(\varrho^{p(i)}\genK_i\otimes\varrho^{p(i)}),\\
        &\genK'_i\mapsto 1\otimes\genK_i,\qquad
        \varrho\mapsto1\otimes \varrho,\qquad
         \varrho'\mapsto\varrho\otimes 1.
    \end{align*}
\end{thm}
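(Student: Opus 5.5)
The plan has three parts. First I show that $\Phi$ is a well-defined algebra homomorphism by checking every defining relation of $\mathbf{U}$ on the images. Next I check that $\Phi$ respects the coalgebra structure on the generators. Finally I prove bijectivity by comparing triangular decompositions.

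\emph{Basic relations.} Using the multiplication \eqref{multiplication} together with the pairing of Proposition~\ref{pairing}, I compute $\circ$-products of the images one generator pair at a time. For pure tensors of group-like elements, $(g\otimes h)\circ(g'\otimes h')=\langle g,h'\rangle\langle g',h\rangle\, gg'\otimes hh'$. This gives commutativity of the images of $\genK_i,\genK'_j,\varrho,\varrho'$. It also gives $\Phi(\varrho)^{\circ2}=\langle\varrho,\varrho\rangle\cdot 1=-1$ up to sign bookkeeping; the signs $(-1)^{p(i)}$ in $\Phi(\genK_i)$ are there to correct exactly this kind of factor, so I will track them carefully.

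For the $\genK$--$\genE$ relations, $\Delta(\varrho^{p(i)}\genE_i)$ has two terms, and only the group-like parts pair nontrivially with $\genK_j$. This yields $q_j^{\pm a_{j,i}}$ factors as required. The relations $x\varrho=\varrho\,\varrho(x)$ follow from $\langle\varrho,\varrho^{p(i)}\rangle=(-1)^{p(i)}$.

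The key relation is $\genE_i\genF_j-(-1)^{p(i)p(j)}\genF_j\genE_i$. Expanding $\Phi(\genE_i)\circ\Phi(\genF_j)$ and $\Phi(\genF_j)\circ\Phi(\genE_i)$ via the coproducts, the leading terms $\varrho^{p(i)}\genE_i\otimes\varrho^{p(i)}\genE_j\varrho^{p(j)}$ agree up to the super sign. What remains comes from the terms where $\genE_i$ pairs with $\genE_j$, giving $-\delta_{ij}/(q_i-q_i^{-1})$, and these should produce $\Phi(\genK_i)-\Phi(\genK'_i)$.

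\emph{Serre relations.} For these I use that $\Phi$ restricted to $\genE$'s is essentially $x\mapsto (\text{twist of }x)\otimes\varrho^{p(\deg x)}$. Concretely, the first tensor factors of the images of the $\genE_i$ have $\circ$-products equal to ordinary products in the first component. The reason is that the second components are group-like $\varrho$'s, and $\langle \genE_i,\varrho\rangle=0$ kills all cross terms. So the pairing factors are only signs from $\langle\varrho,\varrho\rangle$. Hence each Serre element in $\Theta(\Gamma)$ maps to a sign-twisted copy of itself in $\hat{\mathbf U}_q^{\ge0}\otimes 1$ times a $\varrho$-power, and it vanishes there.

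The $\genF$'s go to $1\otimes\genE_i\varrho^{p(i)}$, whose products are computed in the second factor. I have to check that the $q$-commutators $\ad_q$ transform correctly, which amounts to $(-1)^{p(u)p(v)}$ matching the $\varrho$-conjugation signs. I would handle this uniformly: show that $u\mapsto \varrho^{p(u)}u\otimes\varrho^{p(u)}$ is an algebra map from $\hat{\mathbf U}^{+}_q$ (the one whose relations are in $\Theta(\Gamma)$) into the $\circ$-algebra. This reduces all Serre relations at once to their validity in $\hat{\mathbf U}_q^{\ge0}$, and similarly for $\genF$.

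\emph{Coalgebra compatibility and bijectivity.} I verify $\Delta^\imath\Phi=(\Phi\otimes\Phi)\Delta$ and $\ep^\imath\Phi=\ep$ on generators using the formulas for $\Delta^\imath,\ep^\imath$; the antipode is then automatic. For bijectivity, the triangular decomposition $\mathbf U\cong\mathbf U_q^-\otimes\mathbf U_q^0\otimes\mathbf U_q^+$ maps to $(1\otimes\hat{\mathbf U}^+_q)\circ(\text{Cartan part})\circ(\hat{\mathbf U}^+_q\otimes1)$. I then show that $\hat{\mathbf U}^+\otimes\hat{\mathbf U}^0\otimes\hat{\mathbf U}^0\otimes\hat{\mathbf U}^+\to \hat{\mathbf U}_q^{\ge0}\otimes\hat{\mathbf U}_q^{\ge0}$, given by $\circ$-multiplication, is a linear isomorphism. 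It is unitriangular with respect to degree: the leading term is the ordinary product and corrections have lower $\genE$-degree. The Cartan part $\mathbf U^0_q$, generated by $\genK_i^{\pm},\genK_i'^{\pm},\varrho,\varrho'$, maps isomorphically onto the group algebra spanned by $\genK_\mu\varrho^a\otimes\genK_\nu\varrho^b$.

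The main obstacle I expect is the sign and $q$-power bookkeeping in the $\genE_i\genF_j$ relation and in checking $\Delta^\imath\Phi=(\Phi\otimes\Phi)\Delta$ for $\genE_i$. The three-fold coproduct in $\Delta^\imath$ combined with the $\varrho^{p(i)}$ twists and $\langle\varrho,\varrho\rangle=-1$ must exactly reproduce $\genE_i\otimes\varrho^{p(i)}+\genK_i\otimes\genE_i$, including the $(-1)^{p(i)}$ in $\Phi(\genK_i)$. I would do this computation first for $i$ odd, since the even case is the known one from \cite{CLPRW25}.
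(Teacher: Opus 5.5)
Your proposal is correct and follows the paper's proof essentially step for step. The shared steps are: the generator relations, including the $\genE_i\genF_j$ computation; the Serre relations via multiplicativity of $u\mapsto\varrho^{p(u)}u\otimes\varrho^{p(u)}$, which the paper proves by induction on the length of $\genE_{i_1}\cdots\genE_{i_l}$; the coalgebra check on generators; and bijectivity through the triangular decomposition. Where you go further, the additions are sound:
\begin{enumerate}
\item You treat the $\genF$-side Serre relations, which the paper leaves implicit. Your treatment works because the reordering sign $(-1)^{\sum_{u<v}p(i_u)p(i_v)}$ is constant on each weight space.
\item Your filtered triangularity argument genuinely proves both injectivity and surjectivity. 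The paper only asserts $\mathrm{Ker}(\Phi)\cong\mathrm{Ker}(\Phi|_{\mathbf{U}_q^-})\otimes\mathrm{Ker}(\Phi|_{\mathbf{U}_q^0})\otimes\mathrm{Ker}(\Phi|_{\mathbf{U}_q^+})$ and takes surjectivity for granted.
\end{enumerate}

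One slip needs correcting. Your own group-like formula gives $\Phi(\varrho)\circ\Phi(\varrho)=\langle1,\varrho\rangle^2\,1\otimes\varrho^2=1\otimes1$, so no sign arises there. The factor $\langle\varrho,\varrho\rangle=-1$ appears only, and symmetrically, in $\Phi(\varrho)\circ\Phi(\varrho')=\Phi(\varrho')\circ\Phi(\varrho)=-\varrho\otimes\varrho$. The $(-1)^{p(i)}$ in $\Phi(\genK_i)$ is needed instead for two things: making $\Phi(\genK_i)$ group-like for $\Delta^\imath$ with $\ep^\imath(\Phi(\genK_i))=1$, and producing the correct $\genK_i$-term in the $\genE_i\genF_i$ relation.
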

\begin{proof}
    We first verify that $\Phi$ is an algebra homomorphism. Applying \eqref{multiplication}, we have
    \begin{align*}
     \Phi(\genK_i)\circ\Phi(\genK_j)
    &=[(-1)^{p(i)}(\varrho^{p(i)}\genK_i\otimes\varrho^{p(i)})] \circ[(-1)^{p(j)}(\varrho^{p(j)}\genK_j\otimes\varrho^{p(j)})]\\
    &=(-1)^{p(i)p(j)}\langle \varrho^{p(i)}\genK_i,\varrho^{p(j)}\rangle \langle\varrho^{p(j)}\genK_j,\varrho^{p(i)}\rangle \varrho^{p(i)}\genK_i \varrho^{p(j)}\genK_j\otimes\varrho^{p(i)}\varrho^{p(j)}\\
    &=[(-1)^{p(j)}(\varrho^{p(j)}\genK_j\otimes\varrho^{p(j)})]\circ[(-1)^{p(i)}(\varrho^{p(i)}\genK_i\otimes\varrho^{p(i)}) ]
    \\&
    = \Phi(\genK_j)\circ\Phi(\genK_i),\\
   \Phi(\genK_i)\circ\Phi(\genK'_j)
   &=[(-1)^{p(i)}(\varrho^{p(i)}\genK_i\otimes\varrho^{p(i)})] \circ(1\otimes\genK_j)\\&
   =(-1)^{p(i)}\langle\varrho^{p(i)}\genK_i,\genK_j\rangle \langle1,\varrho^{p(i)}\rangle\varrho^{p(i)}\genK_i\otimes\varrho^{p(i)}\genK_j\\
   &= (1\otimes\genK_j)\circ[(-1)^{p(i)}(\varrho^{p(i)}\genK_i\otimes\varrho^{p(i)})]\\&
   =\Phi(\genK'_j)\circ\Phi(\genK_i),\\
   \Phi(\genK_i)\circ\Phi(\genE_j)       
   &=[(-1)^{p(i)}(\varrho^{p(i)}\genK_i\otimes\varrho^{p(i)})]\circ(\varrho^{p(j)}\genE_j\otimes\varrho^{p(j)})\\&
   =(-1)^{p(i)}\langle\varrho^{p(i)}\genK_i,\varrho^{p(j)}\rangle \langle1,\varrho^{p(i)}\rangle \varrho^{p(i)}\genK_i\varrho^{p(j)}\genE_j\otimes\varrho^{p(i)}\varrho^{p(j)}\\
   &=(-1)^{p(i)}q_i^{a_{i,j}}\langle\varrho^{p(j)}\genK_j,\varrho^{p(i)}\rangle \langle\varrho^{p(i)}\genK_i,\varrho^{p(j)}\rangle\varrho^{p(j)}\genE_j\varrho^{p(i)}\genK_i \otimes\varrho^{p(i)}\varrho^{p(j)}\\&
   =q_i^{a_{i,j}}(\varrho^{p(j)}\genE_j\otimes\varrho^{p(j)})\circ[(-1)^{p(i)}(\varrho^{p(i)}\genK_i\otimes\varrho^{p(i)})]\\
   &=q_i^{a_{i,j}}\Phi(\genE_j)\circ\Phi(\genK_i),\\
   \Phi(\genK_i)\circ\Phi(\genF_j)
   &=[(-1)^{p(i)}(\varrho^{p(i)}\genK_i\otimes\varrho^{p(i)})]\circ(1\otimes\genE_j\varrho^{p(j)})\\&
   =(-1)^{p(i)}\langle\varrho^{p(i)}\genK_i,1\rangle \langle1,\varrho^{p(i)}\rangle\varrho^{p(i)}\genK_i\otimes\varrho^{p(i)}\genE_j\varrho^{p(j)}\\
   &=(-1)^{p(i)}q_i^{-a_{i,j}}\langle1,\varrho^{p(i)}\rangle\langle\varrho^{p(i)}\genK_i,\varrho^{p(j)}\genK_j\rangle\varrho^{p(i)}\genK_i\otimes\varrho^{p(j)}\genE_j\varrho^{p(i)}\\
   &=(1\otimes\genE_j\varrho^{p(j)})\circ[(-1)^{p(i)}(\varrho^{p(i)}\genK_i\otimes\varrho^{p(i)})]
   \\
   &=q_i^{-a_{i,j}}\Phi(\genF_j)\circ\Phi(\genK_i),
   \\
   \Phi(\genK'_i)\circ\Phi(\genE_j)
   &=(1\otimes\genK_i)\circ(\varrho^{p(j)}\genE_j\otimes\varrho^{p(j)})\\
   &=\langle1,\varrho^{p(j)}\rangle \langle1,\genK_i\rangle\varrho^{p(j)}\genE_j\otimes\genK_i\varrho^{p(j)}\\
   &=q_i^{-a_{i,j}}\langle\varrho^{p(j)}\genK_j,\genK_i\rangle \langle1,\varrho^{p(j)}\rangle\varrho^{p(j)}\genE_j\otimes\varrho^{p(j)}\genK_i\\
   &=q_i^{-a_{i,j}}(\varrho^{p(j)}\genE_j\otimes\varrho^{p(j)})\circ(1\otimes\genK_i)
   \\
   &=q_i^{-a_{i,j}}\Phi(\genE_j)\circ\Phi(\genK'_i),\\
   \Phi(\varrho)\circ\Phi(\varrho)
   &=(1\otimes\varrho)\circ(1\otimes\varrho)
   =\langle1,\varrho\rangle^2 1\otimes\varrho^2
   =1\otimes1,\\
   \Phi(\genE_i)\circ\Phi(\varrho)
   &=(\varrho^{p(i)}\genE_i\otimes\varrho^{p(i)})\circ(1\otimes\varrho)
   =\langle\varrho^{p(i)}\genK_i,\varrho\rangle \langle1,\varrho^{p(i)}\rangle\varrho^{p(i)}\genE_i\otimes\varrho^{p(i)}\varrho\\
   &=(-1)^{p(i)}\langle1,\varrho^{p(i)}\rangle \langle1,\varrho\rangle\varrho^{p(i)}\genE_i\otimes\varrho\varrho^{p(i)}
   =(1\otimes\varrho)\circ(\varrho^{p(i)}\varrho(\genE_i)\otimes\varrho^{p(i)})\\&
   =\Phi(\varrho)\circ\Phi(\varrho(\genE_i)),\\
   \Phi(\genK_i)\circ\Phi(\varrho)
   &=[(-1)^{p(i)}(\varrho^{p(i)}\genK_i\otimes\varrho^{p(i)})]\circ(1\otimes\varrho)\\
   &=(-1)^{p(i)}\langle\varrho^{p(i)}\genK_i,\varrho\rangle \langle1,\varrho^{p(i)}\rangle \varrho^{p(i)}\genK_i\otimes\varrho^{p(i)}\varrho\\
   &=(-1)^{p(i)}\langle1,\varrho^{p(i)}\rangle \langle\varrho^{p(i)}\genK_i,\varrho\rangle \varrho^{p(i)}\genK_i\otimes\varrho^{p(i)}\varrho\\
   &=(1\otimes\varrho)\circ[(-1)^{p(i)}(\varrho^{p(i)}\genK_i\otimes\varrho^{p(i)})]\\&
   =\Phi(\varrho)\circ\Phi(\genK_i),\\
   \Phi(\genE_i)\circ\Phi(\varrho')
   &=(\varrho^{p(i)}\genE_i\otimes\varrho^{p(i)})\circ(\varrho\otimes1)
   =\langle\varrho^{p(i)}\genK_i,1\rangle \langle\varrho,\varrho^{p(i)}\rangle\varrho^{p(i)}\genE_i\varrho\otimes\varrho^{p(i)}\\
   &=(-1)^{p(i)}\langle\varrho,\varrho^{p(i)}\rangle \langle1,1\rangle\varrho\varrho^{p(i)}\genE_i\otimes\varrho^{p(i)}
   =(\varrho\otimes1)\circ(\varrho^{p(i)}\varrho(\genE_i)\otimes\varrho^{p(i)})\\&
   =\Phi(\varrho')\circ\Phi(\varrho(\genE_i)),\\
   \Phi(\genK'_i)\circ\Phi(\varrho')
   &=(1\otimes\genK_i)\circ(\varrho\otimes1)=\varrho\otimes\genK_i=\Phi(\varrho')\circ\Phi(\genK'_i).
    \end{align*}
 We also have
\begin{align*}
\Phi(\genE_i)&\circ\Phi(\genF_j)-(-1)^{p(i)p(j)}\Phi(\genF_j)\circ\Phi(\genE_i)\\
   &=(\varrho^{p(i)}\genE_i\otimes\varrho^{p(i)})\circ(1\otimes\genE_j\varrho^{p(j)})-(-1)^{p(i)p(j)}(1\otimes\genE_j\varrho^{p(j)})\circ(\varrho^{p(i)}\genE_i\otimes\varrho^{p(i)})\\
   &=\langle\varrho^{p(i)}\genE_i,\genE_j\varrho^{p(j)}\rangle\langle\varrho^{p(i)},1\rangle1\otimes\varrho^{p(i)}\varrho^{p(j)}\genK_j\\&\qquad
   +\langle\varrho^{p(i)}\genK_i,1\rangle\langle1,\varrho^{p(i)}\rangle\varrho^{p(i)}\genE_i\otimes\varrho^{p(i)}\genE_j\varrho^{p(j)}\\&\qquad
   -(-1)^{p(i)p(j)}[\langle1,\varrho^{p(i)}\rangle\langle1,\genK_j\varrho^{p(j)}\rangle\varrho^{p(i)}\genE_i\otimes\genE_j\varrho^{p(j)}\varrho^{p(i)}\\&\qquad
   +\langle1,\varrho^{p(i)}\rangle\langle\varrho^{p(i)}\genE_i,\genE_j\varrho^{p(j)}\rangle \varrho^{p(i)}\genK_i\otimes\varrho^{p(i)}
   ]\\
   &=-\delta_{ij}\frac{1\otimes\varrho^{p(i)}\varrho^{p(j)}\genK_j}{q_i-q_i^{-1}}
   +(-1)^{p(i)p(j)}\delta_{ij}\frac{\varrho^{p(i)}\genK_i\otimes\varrho^{p(i)}}{q_i-q_i^{-1}} 
   \\
   &=\delta_{ij}\frac{(-1)^{p(i)}(\varrho^{p(i)}\genK_i\otimes\varrho^{p(i)})-1\otimes\genK_i}{q_i-q_i^{-1}}
   \\
&=\delta_{ij}\frac{\Phi(\genK_i)-\Phi(\genK'_i)}{q_i-q_i^{-1}}.
\end{align*}
   Similarly, the remaining relations in \eqref{basic relation} and \eqref{varrho'} 
   \begin{align*}
    &  \Phi(\genK'_i)\circ\Phi(\genK'_j)=\Phi(\genK'_j)\circ\Phi(\genK'_i),\qquad \Phi(\genK'_i)\circ\Phi(\genF_j)=q_i^{a_{i,j}}\Phi(\genF_j)\circ\Phi(\genK'_i),\\
      &\Phi(\genF_i)\circ\Phi(\varrho)
   =\Phi(\varrho)\circ\Phi(\varrho(\genF_i)),\qquad
   \Phi(\genK'_i)\circ\Phi(\varrho)
   =\Phi(\varrho)\circ\Phi(\genK'_i),\\
   &\Phi(\genF_i)\circ\Phi(\varrho')
   =\Phi(\varrho')\circ\Phi(\varrho(\genF_i)),\qquad
   \Phi(\genK_i)\circ\Phi(\varrho')
   =\Phi(\varrho')\circ\Phi(\genK_i),
   \\
   &\Phi(\varrho)\circ\Phi(\varrho')
   =\Phi(\varrho')\circ\Phi(\varrho),\qquad\qquad
   \Phi(\varrho')\circ\Phi(\varrho')
   =1\otimes 1,
   \end{align*}
   can be checked case by case.

    As for the Serre relations in Proposition \ref{P:SerreRelations}, we claim that 
    \begin{align*}
        \Phi(\genE_{i_1})\circ \Phi(\genE_{i_2})\circ\cdots\circ\Phi(\genE_{i_l})
        =\varrho^{\sum\limits_{u=1}^l p(i_u)}(\prod\limits_{u=1}^l\genE_{i_u} )\otimes\varrho^{\sum\limits_{u=1}^l p(i_u)},
    \end{align*}
 where $i_1,\cdots,i_l\in\mathbb{I}$. We prove it by induction on $l$. Indeed, if $l=2$, then we have 
 \begin{align*}
   \Phi(\genE_{i_1})\circ \Phi(\genE_{i_2})
   &=(\varrho^{p({i_1})}\genE_{i_1}\otimes\varrho^{p({i_1})})\circ(\varrho^{p({i_2})}\genE_{i_2}\otimes\varrho^{p({i_2})})\\
   &=\langle\varrho^{p({i_1})}\genK_{i_1},\varrho^{p({i_2})}\rangle\langle1,\varrho^{p({i_1})}\rangle\varrho^{p({i_1})}\genE_{i_1}\varrho^{p({i_2})}\genE_{i_2}\otimes\varrho^{p({i_1})}\varrho^{p({i_2})}\\
   &=\varrho^{p({i_1})}\varrho^{p({i_2})}\genE_{i_1}\genE_{i_2}\otimes\varrho^{p({i_1})}\varrho^{p({i_2})}.
 \end{align*}
 Now assume $l>2$, then by induction we obtain
 \begin{align*}
    \Phi(\genE_{i_1})&\circ \Phi(\genE_{i_2})\circ\cdots\circ\Phi(\genE_{i_l})\circ\Phi(\genE_{i_{l+1}})\\
        &=[\varrho^{\sum\limits_{u=1}^l p(i_u)}(\prod\limits_{u=1}^l\genE_{i_u} )\otimes\varrho^{\sum\limits_{u=1}^l p(i_u)}] \circ(\varrho^{p({i_{l+1}})}\genE_{i_{l+1}}\otimes\varrho^{p({i_{l+1}})})\\
        &=\langle\varrho^{\sum\limits_{u=1}^l p(i_u)}\prod\limits_{u=1}^l\genK_{i_u} ,\varrho^{p({i_{l+1}})}\rangle\langle1,\varrho^{\sum\limits_{u=1}^l p(i_u)}\rangle\varrho^{\sum\limits_{u=1}^l p(i_u)}(\prod\limits_{u=1}^{l}\genE_{i_u})\varrho^{p({i_{l+1}})}\genE_{i_{l+1}}\otimes\varrho^{\sum\limits_{u=1}^{l+1} p(i_u)}\\
        &=\varrho^{\sum\limits_{u=1}^{l+1} p(i_u)}(\prod\limits_{u=1}^{l+1}\genE_{i_u})\otimes\varrho^{\sum\limits_{u=1}^{l+1} p(i_u)}.
 \end{align*}

 Note that $\Phi$ is also a coalgebra homomorphism. Indeed, we have
 \begin{align*}
     \Delta^{\imath}\Phi(\genE_i)
     &=\Delta^{\imath}(\varrho^{p(i)}\genE_i\otimes\varrho^{p(i)})\\
     &=\langle1,\varrho^{p(i)}\rangle(\varrho^{p(i)}\genE_i\otimes\varrho^{p(i)})\otimes(1\otimes\varrho^{p(i)})\\
     &\qquad+\langle\varrho^{p(i)}\genK_i,\varrho^{p(i)}\rangle(\varrho^{p(i)}\genK_i\otimes\varrho^{p(i)})\otimes(\varrho^{p(i)}\genE_i\otimes\varrho^{p(i)})\\
     &=(\varrho^{p(i)}\genE_i\otimes\varrho^{p(i)})\otimes(1\otimes\varrho^{p(i)})
     +(-1)^{p(i)}(\varrho^{p(i)}\genK_i\otimes\varrho^{p(i)})\otimes(\varrho^{p(i)}\genE_i\otimes\varrho^{p(i)})\\
     &=(\Phi\otimes\Phi)(\genE_i\otimes\varrho^{p(i)}+\genK_i\otimes\genE_i)\\
     &=(\Phi\otimes\Phi)\Delta(\genE_i),
     \\
     \Delta^{\imath}\Phi(\genF_i)
     &=\Delta^{\imath}(1\otimes \genE_i\varrho^{p(i)})\\
     &=\langle1,1\rangle(1\otimes1)\otimes(1\otimes \genE_i\varrho^{p(i)})
     +\langle1,\genK_i\varrho^{p(i)}\rangle(1\otimes \genE_i\varrho^{p(i)})\otimes(1\otimes\genK_i\varrho^{p(i)})\\
     &=(1\otimes1)\otimes(1\otimes \genE_i\varrho^{p(i)})
     +(1\otimes \genE_i\varrho^{p(i)})\otimes(1\otimes\genK_i\varrho^{p(i)})\\
     &=(\Phi\otimes\Phi)(\genF_i\otimes\varrho^{p(i)}\genK'_i+1\otimes\genF_i)
     \\
     &=(\Phi\otimes\Phi)\Delta(\genF_i),
     \\
     \Delta^{\imath}\Phi(\genK_i)
     &=\Delta^{\imath}((-1)^{p(i)}(\varrho^{p(i)}\genK_i\otimes\varrho^{p(i)}))\\
     &=(-1)^{p(i)}\langle\varrho^{p(i)}\genK_i,\varrho^{p(i)}\rangle(\varrho^{p(i)}\genK_i\otimes\varrho^{p(i)})\otimes(\varrho^{p(i)}\genK_i\otimes\varrho^{p(i)})\\
     &=[(-1)^{p(i)}(\varrho^{p(i)}\genK_i\otimes\varrho^{p(i)})]\otimes[(-1)^{p(i)}(\varrho^{p(i)}\genK_i\otimes\varrho^{p(i)})]\\
     &=(\Phi\otimes\Phi)(\genK_i\otimes\genK_i)\\
     &=(\Phi\otimes\Phi)\Delta(\genK_i),\\
      \Delta^{\imath}\Phi(\genK'_i)
     &=\Delta^{\imath}(1\otimes \genK_i)
     =\langle1,\genK_i\rangle(1\otimes \genK_i)\otimes(1\otimes \genK_i)
     =(\Phi\otimes\Phi)\Delta(\genK'_i),\\     
     \Delta^{\imath}\Phi(\varrho)
     &=\Delta^{\imath}(1\otimes \varrho)
     =\langle1,\varrho\rangle(1\otimes \varrho)\otimes(1\otimes \varrho)
     =(\Phi\otimes\Phi)\Delta(\varrho),\\
     \Delta^{\imath}\Phi(\varrho')
     &=\Delta^{\imath}(\varrho\otimes 1)
     =\langle\varrho,1\rangle(\varrho\otimes1 )\otimes(\varrho\otimes1 )
     =(\Phi\otimes\Phi)\Delta(\varrho').
 \end{align*}
 
 It can be directly verified that the restrictions ${\Phi}|_{\mathbf{U}_q^{-}}$, ${\Phi}|_{\mathbf{U}_q^{0}}$ and ${\Phi}|_{\mathbf{U}_q^{+}}$ are injective. Under the triangular decomposition $\mathbf{U}\cong \mathbf{U}_q^{-}\otimes \mathbf{U}_q^{0}\otimes\mathbf{U}_q^{+}$, we have 
 \begin{align*}
     \mbox{Ker}(\Phi)\cong \mbox{Ker}({\Phi}|_{\mathbf{U}_q^{-}})\otimes \mbox{Ker}({\Phi}|_{\mathbf{U}_q^{0}})\otimes\mbox{Ker}({\Phi}|_{\mathbf{U}_q^{+}})=0.
 \end{align*}
Thus $\Phi$ is injective. Since ${\Phi}$ is also surjective, we deduce that $\Phi$ is an isomorphism of Hopf algebras. 
\end{proof}

For a Hopf algebra $H$ with a Hopf pairing $\psi(-,-)$ and a Hopf algebra endomorphism $\tau$ of $H$, recall from \cite[Definition 2.10]{CLPRW25} that a linear map $\chi:H\rightarrow \mathbb{F}$ is called $\tau$-twisted compatible if  $$\chi(ab)=\sum\chi(a_{(1)})\chi(b_{(2)})\psi( \tau(a_{(2)}),b_{(1)})$$ 
for all $a,b\in H$. $\chi$ is a compatible map if it is $\tau$-twisted compatible with $\tau=$Id.

We show that there is a $\tau$-twisted compatible map on ${\hat{\mathbf{U}}}_q^{\geq0}$ and an involution $\tau$.

\begin{prp}\label{chi}
  { Let $\tau$ be an involution in Inv($A$) with $\tau i\neq i$ if $i\in\mathbb{I}_\iso$. Then there is a unique $\tau$-twisted compatible map $\chi:{\hat{\mathbf{U}}}_q^{\geq0}\rightarrow \C(q)$ such that }
    \begin{align*}
        & \chi(1)=1,\quad \chi(\genE_i)=0, \quad \chi(\genK_i)=
          \langle \genK_i,  \genK_{\tau i}\rangle,\quad \chi(\varrho)=\sqrt{-1}, \quad \mbox{ for all }~i\in\mathbb{I}.
    \end{align*}
\end{prp}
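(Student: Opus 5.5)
The plan is to use the compatibility identity itself as a recursive definition. Since ${\hat{\mathbf{U}}}_q^{\geq0}\cong{\hat{\mathbf{U}}}_q^{+}\otimes{\hat{\mathbf{U}}}_q^{0}$ and $\chi$ is to vanish on $\genE_i$, I define $\chi$ to be zero on every homogeneous component of nonzero degree. On the group-like part I set
\[
\chi(\genK_\mu\varrho^{a})=\chi_0(\genK_\mu)\,(\sqrt{-1})^{a},
\]
where $\chi_0$ is the quadratic-type function determined by the requirement
\[
\chi_0(\genK_\mu\genK_\nu)=\chi_0(\genK_\mu)\chi_0(\genK_\nu)\langle\genK_{\tau\mu},\genK_\nu\rangle .
\]
This requirement is exactly the twisted compatibility for group-likes $a,b$, since then $\Delta a=a\otimes a$ and $\Delta b=b\otimes b$. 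Uniqueness is immediate: twisted compatibility with $a,b$ running over the generators expresses $\chi$ of any monomial through $\chi$ of shorter monomials and the pairing.

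The first key step is existence of $\chi_0$ on the Cartan part. Because the pairing $\langle\genK_{\tau\mu},\genK_\nu\rangle=q^{(\tau\mu,\nu)}$ is symmetric in $\mu,\nu$ (using $\tau\in$ Inv$(A)$), a quadratic refinement exists. A natural candidate is
\[
\chi_0(\genK_\mu)=q^{\frac12(\tau\mu,\mu)}
\]
adjusted on the diagonal so that $\chi_0(\genK_i)=\langle\genK_i,\genK_{\tau i}\rangle$. I must check that the bicharacter $\langle \genK_{\tau\mu},\genK_\nu\rangle$ has the right symmetry to make the recursion consistent. Concretely, defining $\chi_0$ on $\genK_{i_1}\cdots\genK_{i_n}$ by the product formula $\prod_u\chi_0(\genK_{i_u})\prod_{u<v}\langle\genK_{\tau i_u},\genK_{i_v}\rangle$ must be independent of the order. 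This independence is equivalent to $\langle\genK_{\tau i},\genK_j\rangle=\langle\genK_{\tau j},\genK_i\rangle$, which is $q_{\tau i}^{a_{\tau i,j}}=q_{\tau j}^{a_{\tau j,i}}$ and follows from $d_ia_{i,j}=d_{\tau i}a_{\tau i,\tau j}$. Compatibility with $\genK_i\genK_i^{-1}=1$ forces $\chi_0(\genK_i^{-1})=\chi_0(\genK_i)^{-1}\langle\genK_{\tau i},\genK_i\rangle^{-1}\cdot$(inverse correction), which I will just define accordingly.

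The $\varrho$ factor works as follows. The relations $\varrho^2=1$ and $\varrho\genK_i=\genK_i\varrho$ require $\chi(\varrho)^2\langle\varrho,\varrho\rangle=\chi(1)=1$. Since $\langle\varrho,\varrho\rangle=-1$, this gives $\chi(\varrho)^2=-1$, explaining $\chi(\varrho)=\sqrt{-1}$. Also $\langle\varrho,\genK_i\rangle=1$, so the mixed terms are trivial.

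The second step is to check that $\chi$, so defined on the free algebra ${\dot{\mathbf{U}}}_q^{\geq0}$ via the recursion, satisfies compatibility for arbitrary $a,b$. I will write $a=x\,g$, $b=y\,h$ with $x,y\in{\hat{\mathbf{U}}}_q^+$ homogeneous and $g,h$ group-like. In $\sum\chi(a_{(1)})\chi(b_{(2)})\langle\tau(a_{(2)}),b_{(1)}\rangle$, only terms with $a_{(1)}$ and $b_{(2)}$ of degree $0$ survive, i.e.\ $a_{(1)}=\genK_{\deg x}g$ up to $\varrho$-powers, and similarly for $b_{(2)}$. Then the sum reduces to $\chi(\genK_{\deg x}g\cdots)\chi(h\cdots)\langle\tau(x g),y\genK_{\deg y}h\rangle$. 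This vanishes unless $\tau\deg x+\deg y=0$, hence unless $x,y$ are scalars, since both degrees lie in the positive cone. This matches $\chi(ab)=0$ for $\deg(ab)\neq0$.

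The main obstacle is descent to the quotient by the relations \eqref{basic relation}, \eqref{varrho'} and $\Theta(\Gamma)$. The Serre elements have nonzero degree, so $\chi$ vanishes on them. Using the twisted compatibility for $\chi(u\theta v)$ together with Lemma \ref{Delta}, every term contains $\chi$ or a pairing of a factor $\theta$, and both vanish because $\theta$ is in the radical (Proposition \ref{pairing}). The $\varrho$-relation $\genE_i\varrho=(-1)^{p(i)}\varrho\genE_i$ has nonzero degree, so it is handled the same way. The $\genK\genE$ commutation relation needs a similar degree argument. The hypothesis $\tau i\ne i$ for $i\in\mathbb{I}_\iso$ enters in the degree-zero bookkeeping with $\varrho^{p}$ signs: without it, $\langle\genK_i,\genK_{\tau i}\rangle=q_i^{a_{ii}}=1$ for isotropic $i$ would clash with the sign $(-1)^{p(i)}$ coming from $\langle\varrho^{p(i)},\varrho^{p(i)}\rangle$. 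I expect this sign check to be the delicate point and would verify it directly on $\chi(\genE_i\circ\genE_{\tau i})$-type pairings.
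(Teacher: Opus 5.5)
\textbf{There is a genuine gap: your candidate map is not $\tau$-twisted compatible.} Your construction defines $\chi$ to be zero on every homogeneous component of nonzero degree. The pairing here pairs ${\hat{\mathbf{U}}}_q^{\geq0}$ with itself and preserves degree, since $\langle\genE_i,\genE_i\rangle=-1/(q_i-q_i^{-1})\neq0$. So in your second step the pairing factor survives when $\deg y=\tau\deg x$, not when $\tau\deg x+\deg y=0$. Restricting to degree-zero $a_{(1)},b_{(2)}$ also presupposed the vanishing you were trying to prove. Taking $a=\genE_i$, $b=\genE_j$ in the compatibility identity forces
\[
\chi(\genE_i\genE_j)=\chi(\genK_i)\,\chi(\varrho^{p(j)})\,\langle\tau(\genE_i),\genE_j\rangle ,
\]
which is nonzero whenever $j=\tau i$. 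For example, for $i$ even with $\tau i=i$ you get $\chi(\genE_i^2)=-q_i^{2}/(q_i-q_i^{-1})$. Consequently your descent step (``Serre elements have nonzero degree, so $\chi$ vanishes on them'') proves nothing. This is also why you could not locate where the hypothesis on $\mathbb{I}_\iso$ is used.

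\textbf{What the existence proof actually needs.} Your reduction at the end is the right framework: by Lemma \ref{Delta} and the fact that each relation lies in the radical of the pairing, it suffices to check $\chi(\theta)=0$ on the generating relations, as in \cite[Lemma 4.2]{CLPRW25}. The correct vanishing criterion, however, is different. $\chi$ kills a monomial in the $\genE$'s (times group-likes) unless its indices can be matched into pairs $(i_x,\tau i_x)$.

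This criterion disposes of $\genK_i\genE_j-q_i^{a_{i,j}}\genE_j\genK_i$, of $\genE_i\varrho-(-1)^{p(i)}\varrho\genE_i$, and of the higher Serre relations other than (Iso) and (N-Iso). Each of these contains an index $i_v$ whose $\tau$-partner does not occur among the remaining indices.

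The $\tau$-balanced relations require actual computation:
\begin{itemize}
\item For (Iso) with $j=\tau i$, you have $\chi(\genK_i)=\chi(\genK_j)=1$. Because $q_i=q_{\tau i}^{-1}$ for odd $i\neq\tau i$, you get $\langle\tau(\genE_i),\genE_j\rangle=-\langle\tau(\genE_j),\genE_i\rangle$, whence $\chi(\genE_i\genE_j+\genE_j\genE_i)=0$.
\item For $\genE_i^2=0$ with $i\in\mathbb{I}_\iso$, the value $\chi(\genE_i^2)=\chi(\genK_i)\chi(\varrho)\langle\tau(\genE_i),\genE_i\rangle$ vanishes only because $\tau i\neq i$. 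If $\tau i=i$ it equals $-\sqrt{-1}/(q_i-q_i^{-1})\neq0$, and no such $\chi$ exists. This, not a degree-zero sign clash, is where the hypothesis enters.
\item For (N-Iso), the only balanced instance is $a_{i,j}=0$ with $j=\tau i$. It is handled by the computation of \cite[Lemma 4.2]{CLPRW25}.
\end{itemize}
Your treatment of the Cartan part and of $\chi(\varrho)^2=-1$ is fine, and uniqueness via the recursion is fine. Without the checks above, however, existence is not established.
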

\begin{proof}
We shall prove that $\chi$ vanishes on the ideal generated by the relations  involving $E_i,K_i,\varrho$ in \eqref{basic relation} and \eqref{varrho'}, together with  the higher order quantum Serre relations in Proposition \ref{P:SerreRelations}. 
    By Lemma \ref{Delta}, Proposition \ref{pairing}, and \cite[Lemma 4.2]{CLPRW25},  it suffices to verify that $\chi(\theta)=0$ for each generating relation  $\theta$.
    A direct calculation shows that
    \begin{align*}    &\chi(\genK_i\genK_j)=\chi(\genK_i)\chi(\genK_j)\langle\tau(\genK_i),\genK_j\rangle=\chi(\genK_j)\chi(\genK_i)\langle\tau(\genK_j),\genK_i\rangle=\chi(\genK_j\genK_i),\\    &\chi(\genK_i\genE_j)=\chi(\genK_i)\chi(\varrho^{p(j)})\langle\tau(\genK_i),\genE_j\rangle+\chi(\genK_i)\chi(\genE_j)\langle\tau(\genK_i),\genK_j\rangle=0,\\
    &\chi(\genE_j\genK_i)=\chi(\genE_j)\chi(\genK_i)\langle\varrho^{p(j)},\genK_i\rangle+\chi(\genK_j)\chi(\genK_i)\langle\tau(\genE_j),\genK_i\rangle=0,\\
    &{\chi(\varrho^2)=\chi(\varrho)^2\langle\tau\varrho,\varrho\rangle=1},\\
    &\chi(\genK_i\varrho)=\chi(\genK_i)\chi(\varrho)\langle \tau(\genK_i),\varrho\rangle= \chi(\varrho)\chi(\genK_i)\langle\tau\varrho,\genK_i\rangle=\chi(\varrho\cdot\genK_i),\\& \chi(\genE_i\varrho)= \chi(\genK_i)\chi(\varrho)\langle\tau(\genE_i),\varrho\rangle=0=\chi(\varrho)\chi(\varrho^{p(i)})\langle\tau\varrho,\varrho(\genE_i)\rangle = \chi(\varrho\cdot\varrho(\genE_i)).
    \end{align*}

    As for the higher Serre relations in Proposition \ref{P:SerreRelations}, we first have 
    \begin{align*}
        \chi(\genE_i\genE_j)
        &=\chi(\genE_i)\chi(\varrho^{p(j)})\langle\varrho^{p(i)},\genE_j\rangle+\chi(\genK_i)\chi(\varrho^{p(j)})\langle\tau(\genE_i),\genE_j\rangle\\
        &\qquad+\chi(\genE_i)\chi(\genE_j)\langle\varrho^{p(i)},\genK_j\rangle+\chi(\genK_i)\chi(\genE_j)\langle\tau(\genE_i),\genK_j\rangle\\
        &=\chi(\genK_i)\chi(\varrho^{p(j)})\langle\tau(\genE_i),\genE_j\rangle.
    \end{align*}
Thus $\chi(\genE_i^2)=0$ for $i\in \mathbb{I}_\iso$. For $i\in\mathbb{I}_\one$ with $i\neq \tau i$, we have 
$q_i=q_{\tau i}^{-1}$. It follows that $\chi(\genE_i\genE_j)=-\chi(\genE_j\genE_i)$ 
for $i,j\in \mathbb{I}_\one$ with $a_{i,j}=0$. The argument in \cite[Lemma 4.2]{CLPRW25} also applies to case (N-Iso). 
    
    Let $\eta$ be a monomial occurring in one of the higher quantum
Serre relations in Proposition 2.1, excluding cases (Iso) and
(N-Iso). Observe that there exists $i_v$ in $\eta=E_{i_1}\cdots E_{i_l}$ such that 
\[
   \tau(i_v)\notin\{i_u\mid 1\leq u\leq l,\ u\neq v\}.
\]
 By the definition of $\chi$ and \eqref{hopfstructure}, a straightforward calculation shows that
\begin{align*}
\chi(\eta)
&=\chi(\prod\limits_{u=1}^{v-1}\genE_{i_u}\cdot \genE_{i_v}\cdot\prod\limits_{u=v+1}^{l}\genE_{i_u})\\
&=\sum\chi((\prod\limits_{u=1}^{v-1}\genE_{i_u})_{(1)})\chi((\genE_{i_v})_{(2)})\chi((\prod\limits_{u=v+1}^{l}\genE_{i_u})_{(3)})
\langle \tau(\genE_{i_v})_{(3)},(\prod\limits_{u=v+1}^{l}\genE_{i_u})_{(2)}\rangle\\&\qquad\quad
\cdot\langle\tau(\prod\limits_{u=1}^{v-1}\genE_{i_u})_{(2)} ,(\genE_{i_v})_{(1)}\rangle
\langle\tau(\prod\limits_{u=1}^{v-1}\genE_{i_u})_{(3)}, (\prod\limits_{u=v+1}^{l}\genE_{i_u})_{(1)}\rangle\\
&=\sum\chi((\prod\limits_{u=1}^{v-1}\genE_{i_u})_{(1)})\chi(\genK_{i_v})\chi((\prod\limits_{u=v+1}^{l}\genE_{i_u})_{(3)})
\langle \tau(\genE_{i_v}),(\prod\limits_{u=v+1}^{l}\genE_{i_u})_{(2)}\rangle\\&\qquad\quad
\cdot\langle\tau(\prod\limits_{u=1}^{v-1}\genE_{i_u})_{(2)} ,\genK_{i_v}\rangle
\langle\tau(\prod\limits_{u=1}^{v-1}\genE_{i_u})_{(3)}, (\prod\limits_{u=v+1}^{l}\genE_{i_u})_{(1)}\rangle\\
&\qquad+\sum\chi((\prod\limits_{u=1}^{v-1}\genE_{i_u})_{(1)})\chi(\varrho^{p(i_v)})\chi((\prod\limits_{u=v+1}^{l}\genE_{i_u})_{(3)})
\langle \tau\varrho^{p(i_v)},(\prod\limits_{u=v+1}^{l}\genE_{i_u})_{(2)}\rangle\\&\qquad\quad
\cdot\langle\tau(\prod\limits_{u=1}^{v-1}\genE_{i_u})_{(2)} ,\genE_{i_v}\rangle
\langle\tau(\prod\limits_{u=1}^{v-1}\genE_{i_u})_{(3)}, (\prod\limits_{u=v+1}^{l}\genE_{i_u})_{(1)}\rangle\\
&=0,
\end{align*}
where the last equality follows from $\langle \tau(\genE_{i_v}),(\prod\limits_{u=v+1}^{l}\genE_{i_u})_{(2)}\rangle=0$ and $\langle\tau(\prod\limits_{u=1}^{v-1}\genE_{i_u})_{(2)} ,\genE_{i_v}\rangle=0$. 
Therefore, the proof is complete.
\end{proof}

Define a degree function $wt: {\hat{\mathbf{U}}}_q^{\geq0}\to \Z$ by setting
\begin{align*}
    wt(\genE_i)=1,\qquad wt(\genK_i)=0=wt(\varrho),\qquad i\in\mathbb{I}.
\end{align*}
On the tensor product ${\hat{\mathbf{U}}}_q^{\geq0}\otimes{\hat{\mathbf{U}}}_q^{\geq0}$, we use the degree induced by the second tensor factor, i.e. 
\begin{align*}
    wt(x\otimes y)=wt(y),\qquad \mbox{for}~x\otimes y\in{\hat{\mathbf{U}}}_q^{\geq0}\otimes{\hat{\mathbf{U}}}_q^{\geq0}.
\end{align*}


By \cite[Proposition 2.12]{CLPRW25}, we have the following.
\begin{prp}\label{xi}
    There exists an  injective algebra homomorphism $$\xi^{\imath}_{\tau}: ({\hat{\mathbf{U}}}_q^{\geq0})^{ \imath}_{\tau}\to {({\hat{\mathbf{U}}}_q^{\geq0}\otimes{\hat{\mathbf{U}}}_q^{\geq0})^{\imath}}$$ given by
    \begin{align*}
       & \genE_i\mapsto (1\otimes\varrho^{p(i)})\circ(\varrho^{p(i)}\otimes1)\circ[(\sqrt{-1})^{p(i)}(1\otimes\genE_i\varrho^{p(i)})+(\varrho^{p(i)}\tau(\genE_i)\otimes\varrho^{p(i)})\circ(1\otimes\genK_i)],\\
       &\genK_i\mapsto
         (1\otimes\varrho^{p(i)})\circ(\varrho^{p(i)}\otimes1)\circ[(-1)^{p(i)}\varrho^{p(i)}\tau(\genK_i)\otimes\varrho^{p(i)}]\circ(1\otimes\genK_i),\\
        &\varrho\mapsto-\sqrt{-1}(1\otimes\varrho)\circ(\varrho\otimes1).
    \end{align*}
\end{prp}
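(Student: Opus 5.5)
The plan is to deduce the statement from the general embedding theorem \cite[Proposition 2.12]{CLPRW25}. In that result, for a Hopf algebra $H$ with a non-degenerate Hopf pairing $\psi$, a Hopf algebra automorphism $\tau$ compatible with $\psi$, and a $\tau$-twisted compatible map $\chi$, one has an injective algebra homomorphism $H^\imath_\tau\to (H\otimes H)^\imath$. It is given on elements by
\[
a\mapsto \sum \chi(a_{(1)})\,\tau(a_{(2)})\otimes a_{(3)}
\]
or by a close variant obtained after adjusting by the antipode. We take $H={\hat{\mathbf{U}}}_q^{\geq0}$, the pairing $\langle-,-\rangle$ of Proposition \ref{pairing} (non-degenerate), and $\chi$ as in Proposition \ref{chi}. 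The hypotheses still to check are that $\tau$ of \eqref{tau}, restricted to ${\hat{\mathbf{U}}}_q^{\geq0}$, is a Hopf algebra automorphism and that it preserves the pairing, i.e. $\langle\tau(a),\tau(b)\rangle=\langle a,b\rangle$. Both reduce to checks on the generators $\genE_i,\genK_i,\varrho$.

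\textbf{Checking $\tau$.} Compatibility with $\Delta$ on $\genE_i$ uses $p(\tau i)=p(i)$ and the sign $(-1)^{p(i)}$ in \eqref{tau}. For the pairing, $\langle\tau\genE_i,\tau\genE_j\rangle$ picks up a factor $(-1)^{2p(i)}=1$ and the value $-\delta_{ij}/(q_{\tau i}-q_{\tau i}^{-1})$. When $i\in\mathbb{I}_\one$ and $\tau i\neq i$ we have $q_{\tau i}=q_i^{-1}$, so this value changes sign. This sign discrepancy must be absorbed, and I expect this is precisely why the images involve the factors $(\sqrt{-1})^{p(i)}$ and $\chi(\varrho)=\sqrt{-1}$. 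If exact invariance fails, I would instead apply the general construction directly and check its proof step by step for ${\hat{\mathbf{U}}}_q^{\geq0}$, using that the only required identities are \eqref{hopfpairing} together with $\tau$-twisted compatibility of $\chi$.

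\textbf{Evaluating on generators.} Next we compute the image of each generator and rewrite it using the diagonal multiplication \eqref{multiplication}. From $\Delta^{(2)}(\genE_i)$ there are three terms. The $\chi(\genE_i)=0$ term drops, and the remaining terms give $\chi(\genK_i)\,\tau(\genK_i)\otimes \genE_i$ and $\chi(\varrho^{p(i)})\,\tau(\genE_i)\otimes\varrho^{p(i)}$, possibly with $S^{\pm1}$ inserted. We then factor these as $\circ$-products. For example, we use
\[
(1\otimes\varrho^{p(i)})\circ(\varrho^{p(i)}\otimes1)=\varrho^{p(i)}\otimes\varrho^{p(i)}\cdot(\text{scalar}),
\]
and absorb $\genK_i$ on the right via $(x\otimes y)\circ(1\otimes\genK_i)$, computed from \eqref{multiplication}. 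Matching scalars, including the $\sqrt{-1}$ and the $(-1)^{p(i)}$ coming from $\langle\varrho,\varrho\rangle=-1$, should reproduce the stated formulas for $\genE_i$, $\genK_i$ and $\varrho$. For $\varrho$ we get $\chi(\varrho)\,\varrho\otimes\varrho$, and $(1\otimes\varrho)\circ(\varrho\otimes1)=\langle\varrho,\varrho\rangle\,\varrho\otimes\varrho$ gives the factor $-\sqrt{-1}$.

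\textbf{Injectivity and the main obstacle.} Injectivity follows from the general result. Alternatively, compose with $\ep\otimes\Id$ to recover the identity up to the leading term in the $wt$ grading just defined: the component of top weight in the second factor is essentially $1\otimes a$ times a group-like element. The main obstacle is the sign and parity bookkeeping: verifying the hypotheses of \cite[Proposition 2.12]{CLPRW25} in the presence of $\varrho$, and showing that the odd non-fixed nodes with $q_{\tau i}=q_i^{-1}$ are handled consistently by the $\sqrt{-1}$ normalization.
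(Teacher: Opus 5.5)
There is a genuine gap: the formula you take for the embedding is wrong, and with it the stated images cannot be reached. Your outline is otherwise the paper's: specialize \cite[Proposition 2.12]{CLPRW25} with the $\chi$ of Proposition \ref{chi}, evaluate on generators, then prove injectivity. With the conventions fixed here (\eqref{new product}, \eqref{multiplication}, \eqref{hopfstructure}) the homomorphism is
\[
\xi^{\imath}_\tau(a)=\sum\chi(a_{(2)})\,\tau(a_{(3)})\otimes a_{(1)}.
\]
Here $\chi$ sits on the middle leg, the first leg goes to the second tensor factor, and no antipode is involved.

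Your map $\xi'(a)=\sum\chi(a_{(1)})\tau(a_{(2)})\otimes a_{(3)}$ sends $\genE_i$ to $\chi(\genK_i)\bigl(\genK_{\tau i}\otimes\genE_i+\tau(\genE_i)\otimes\varrho^{p(i)}\bigr)$. (The two terms you list do not even follow from your own formula.) Expanding the stated $\circ$-products by \eqref{multiplication} instead gives $(\sqrt{-1})^{p(i)}\varrho^{p(i)}\otimes\genE_i+q_i^{a_{i,\tau i}}\tau(\genE_i)\otimes\genK_i$. The tensor shapes differ, so no matching of scalars can reconcile them.

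Moreover, $\xi'$ is not multiplicative. Since $\genK_j\circ\genE_i=\genK_j\genE_i$ (Proposition \ref{KE}), look at the coefficient of $\genK_{\tau j}\tau(\genE_i)\otimes\genK_j\varrho^{p(i)}$. In $\xi'(\genK_j\circ\genE_i)$ it is $\chi(\genK_j\genK_i)=\chi(\genK_j)\chi(\genK_i)\langle\genK_{\tau j},\genK_i\rangle$. In $\xi'(\genK_j)\circ\xi'(\genE_i)$ it is $\chi(\genK_j)\chi(\genK_i)$. Taking $i=j=\tau i$ even, these differ by a factor $q_i^2$.

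With the correct formula, $\Delta^{(2)}(\genE_i)$ gives exactly $\chi(\varrho^{p(i)})\varrho^{p(i)}\otimes\genE_i+\chi(\genK_i)\tau(\genE_i)\otimes\genK_i$. Identifying this with the stated expression uses three facts: $(1\otimes\varrho^{p(i)})\circ(\varrho^{p(i)}\otimes1)=(-1)^{p(i)}\varrho^{p(i)}\otimes\varrho^{p(i)}$, $\varrho^{p(i)}\genE_i\varrho^{p(i)}=(-1)^{p(i)}\genE_i$, and $q_{\tau i}^{a_{\tau i,i}}=q_i^{a_{i,\tau i}}$.

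Your worry about $\tau$ preserving the pairing is unnecessary. As you suspected, invariance fails for $i\in\mathbb{I}_\one$ with $\tau i\neq i$, where $\langle\tau\genE_i,\tau\genE_i\rangle=-\langle\genE_i,\genE_i\rangle$, but it is never used. Expanding with five-fold coproducts, both $\xi^\imath_\tau(a\circ b)$ and $\xi^\imath_\tau(a)\circ\xi^\imath_\tau(b)$ equal
\[
\sum\chi(a_{(3)})\chi(b_{(3)})\,\langle\tau(a_{(4)}),b_{(2)}\rangle\,\langle\tau(b_{(5)}),a_{(1)}\rangle\,\tau(a_{(5)})\tau(b_{(4)})\otimes a_{(2)}b_{(1)}.
\]
This needs only that $\tau$ is a bialgebra endomorphism and that $\chi$ is $\tau$-twisted compatible.

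Likewise, $\sqrt{-1}$ does not absorb the sign coming from $q_{\tau i}=q_i^{-1}$. That sign is used only in Proposition \ref{chi}, to show that $\chi$ kills the (Iso) relations. The $\sqrt{-1}$ is forced by $1=\chi(\varrho^2)=\chi(\varrho)^2\langle\varrho,\varrho\rangle=-\chi(\varrho)^2$.

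Your injectivity alternative via $\ep\otimes\Id$ is sound. It is essentially the paper's leading-term argument, packaged more cleanly. With the correct formula, $(\ep\otimes\Id)\xi^\imath_\tau(a)=\sum a_{(1)}\chi(a_{(2)})$. For $b\in{\hat{\mathbf{U}}}_q^{+}$ of degree $\beta$ and $u$ group-like, this equals $\chi(\varrho^{p(\beta)}u)\,bu$ plus terms of lower $wt$, and $\chi$ is nonzero on group-likes. You do need this argument: the paper uses \cite[Proposition 2.12]{CLPRW25} only for the homomorphism property and proves injectivity separately.
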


\begin{proof}
   Recall from \cite[Proposition 2.12]{CLPRW25} that, for any $a\in ({\hat{\mathbf{U}}}_q^{\geq0})^{ \imath}_\tau$, the map
   \begin{align*}
     \xi^{\imath}_{\tau}(a)=\sum\chi(a_{(2)})\cdot \tau(a_{(3)})\otimes a_{(1)} 
   \end{align*}   
   is an algebra homomorphism. Then applying \eqref{multiplication} and Proposition \ref{chi}, one can check that 
    \begin{align*}
   \xi^{\imath}_\tau(\genE_i) &=\chi(\varrho^{p(i)})\varrho^{p(i)}\otimes\genE_i+\chi(\genK_i)\cdot\tau(\genE_i)\otimes\genK_i\\
    &=(\sqrt{-1})^{p(i)}\varrho^{p(i)}\otimes\genE_i +q_i^{a_{i,\tau i}}\tau(\genE_i)\otimes\genK_i\\
    &=(1\otimes\varrho^{p(i)})\circ(\varrho^{p(i)}\otimes1)\circ[(\sqrt{-1})^{p(i)}(1\otimes\genE_i\varrho^{p(i)})+(\varrho^{p(i)}\tau(\genE_i)\otimes\varrho^{p(i)})\circ(1\otimes\genK_i)], \\
     \xi^{\imath}_\tau(\genK_i)
    &=\chi(\genK_i)\tau(\genK_i)\otimes\genK_i\\ &=q_i^{a_{i,\tau i}}\tau(\genK_i)\otimes\genK_i\\
    &=(1\otimes\varrho^{p(i)})\circ(\varrho^{p(i)}\otimes1)\circ[(-1)^{p(i)}\varrho^{p(i)}\tau(\genK_i)\otimes\varrho^{p(i)}]\circ(1\otimes\genK_i),\\
    \xi^{\imath}_\tau(\varrho)
    &=\chi(\varrho)\tau(\varrho)\otimes\varrho=\sqrt{-1}\varrho\otimes\varrho
    =-\sqrt{-1}\langle\varrho,\varrho\rangle\varrho\otimes\varrho
    =-\sqrt{-1}(1\otimes\varrho)\circ(\varrho\otimes1).
    \end{align*}
    
    Under the degree function, $\xi^{\imath}_\tau(\genE_i)$ has a leading term $(\sqrt{-1})^{p(i)}\varrho^{p(i)}\otimes\genE_i$. We claim that $\xi^{\imath}_\tau$ is injective. Indeed, choose a  homogeneous PBW basis $\mathcal B^+$ of
${\hat{\mathbf{U}}}_q^{+}$, and let $\mathcal B^0$ be the standard basis of
${\hat{\mathbf{U}}}_q^{0}$ consisting of monomials in the $K_i^{\pm1}$ and
$\varrho$.  By the triangular decomposition, the set
\[
   \mathcal B:=\{bu\mid b\in\mathcal B^+,\ u\in\mathcal B^0\}
\]
is a basis of ${\hat{\mathbf{U}}}_q^{\geq0}$. Then it follows inductively that, for every $b\in\mathcal B^+$
and $u\in\mathcal B^0$, $\xi^\imath_\tau(bu)$ has a leading term $c_{b,u} g_{b,u}\otimes bu$ together with terms of strictly lower degree,
where $c_{b,u}\in\mathbb C(q)^*$ and
$g_{b,u}\in{\hat{\mathbf{U}}}_q^{0}$ is invertible.

Hence the images $\xi^\imath_\tau(bu)$ ($b\in \mathcal B^+,u\in\mathcal B^0$) are linearly independent since the leading terms of the elements with maximal degrees will not disappear.
Consequently, $\xi^\imath_\tau$ is injective.
\end{proof}

In this section,  we specialize the parameter $\varsigma_i\in \C(q)^*$ in Definition \ref{def:Ui} as follows:
\begin{align*}
   \varsigma_i=\begin{cases}
    (-\sqrt{-1})^{p(i)}, &\mbox{if}~ \tau i=i,\\
    (\sqrt{-1})^{p(i)}, &\mbox{if}~ \tau i\neq i.\\
   \end{cases} 
\end{align*}

\begin{thm}\label{Phii}
    There is an algebra isomorphism $\Phi^{\imath}_\tau:({\hat{\mathbf{U}}}_q^{\geq0})^{ \imath}_\tau\to\mathbf{U}^{\imath}$ given by
    \begin{align*}
    \genE_i\mapsto (\sqrt{-1})^{p(i)}\tilde{\varrho}^{p(i)}B_i, \quad
   \genK_i\mapsto \tilde{\varrho}^{p(i)}\tilde{k}_{\tau i},\quad \varrho\mapsto-\sqrt{-1}\tilde{\varrho}.
    \end{align*}
\end{thm}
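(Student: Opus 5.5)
We will not construct $\Phi^{\imath}_\tau$ directly. Instead we define it as the composite $\Phi^{-1}\circ\xi^{\imath}_\tau$ and use the commutative diagram of the introduction as the guiding principle. By Theorem \ref{Phi}, $\Phi^{-1}:({\hat{\mathbf{U}}}_q^{\geq0}\otimes{\hat{\mathbf{U}}}_q^{\geq0})^{\imath}\to\mathbf{U}$ is an algebra isomorphism. By Proposition \ref{xi}, $\xi^{\imath}_\tau$ is an injective algebra homomorphism. Hence $\Phi^{-1}\circ\xi^{\imath}_\tau$ is an injective algebra homomorphism $({\hat{\mathbf{U}}}_q^{\geq0})^{\imath}_\tau\to\mathbf{U}$. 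It then suffices to do two things: show that it sends $\genE_i,\genK_i,\varrho$ to the prescribed elements, and show that its image is exactly $\Ui$.

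\textbf{Step 1: compute the images of the generators.} We read off $\Phi^{-1}$ on the factors in Proposition \ref{xi}, using Theorem \ref{Phi}:
\[
1\otimes\varrho=\Phi(\varrho),\quad \varrho\otimes 1=\Phi(\varrho'),\quad 1\otimes\genK_i=\Phi(\genK'_i),\quad 1\otimes\genE_i\varrho^{p(i)}=\Phi(\genF_i),
\]
\[
\varrho^{p(i)}\genE_{i}\otimes\varrho^{p(i)}=\Phi(\genE_i),\quad (-1)^{p(i)}\varrho^{p(i)}\genK_i\otimes\varrho^{p(i)}=\Phi(\genK_i).
\]
Since $\tau(\genE_i)=\pm\genE_{\tau i}$ and $\tau(\genK_i)=\genK_{\tau i}$, we obtain
\[
\Phi^{-1}\xi^{\imath}_\tau(\varrho)=-\sqrt{-1}\varrho\varrho'=-\sqrt{-1}\tilde\varrho
\]
and
\[
\Phi^{-1}\xi^{\imath}_\tau(\genK_i)=(\varrho\varrho')^{p(i)}\genK_{\tau i}\genK'_i=\tilde\varrho^{p(i)}\tilde k_{\tau i}.
\]
For $\genE_i$ the result is
\[
\tilde\varrho^{p(i)}\bigl((\sqrt{-1})^{p(i)}\genF_i+c_i\genE_{\tau i}\genK'_i\bigr),
\]
where $c_i=\pm1$ records the sign coming from $\tau(\genE_i)$. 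The specialization of $\varsigma_i$ should match exactly: $c_i=(\sqrt{-1})^{p(i)}\varsigma_i$, with sign $(-1)^{p(i)}$ when $\tau i\neq i$. Granting this, the image of $\genE_i$ is $(\sqrt{-1})^{p(i)}\tilde\varrho^{p(i)}B_i$. I expect this bookkeeping to be the main obstacle. It requires tracking the $\varrho$-factors and the order of $\circ$-products in $\mathbf{U}$ via $x\varrho=\varrho\cdot\varrho(x)$, together with the signs from $(\sqrt{-1})^{p(i)}$. The case split $\tau i=i$ versus $\tau i\neq i$, and the standing assumption $\tau i\neq i$ for $i\in\mathbb{I}_\iso$, enter precisely here.

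\textbf{Step 2: the image is $\Ui$.} The images of the generators of $({\hat{\mathbf{U}}}_q^{\geq0})^{\imath}_\tau$ lie in $\Ui$, so the image of $\Phi^{-1}\circ\xi^{\imath}_\tau$ is contained in $\Ui$. Conversely, the elements $\tilde\varrho$, $\tilde k_{j}$ and $B_i$ all lie in the image. Indeed, $\tilde\varrho$ is invertible there because $\tilde\varrho^2=1$, and $\tilde k_{\tau i}$ runs over all $\tilde k_j$ as $i$ varies. Using $\tilde\varrho^2=1$ we can then solve for $B_i$. Since these elements generate $\Ui$ by Definition \ref{def:Ui}, the map is surjective onto $\Ui$.

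\textbf{Conclusion.} Combining the injectivity of $\xi^{\imath}_\tau$ and of $\Phi^{-1}$ with Step 2, $\Phi^{\imath}_\tau=\Phi^{-1}\circ\xi^{\imath}_\tau$ is an algebra isomorphism onto $\Ui$ with the stated formulas. Commutativity of the diagram holds by construction.
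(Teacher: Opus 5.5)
Your proposal follows the paper's proof essentially verbatim. You realize $\Phi^{\imath}_\tau$ as $\Phi^{-1}\circ\xi^{\imath}_\tau$, read off the images of $\genE_i,\genK_i,\varrho$ from the $\circ$-product expressions in Proposition \ref{xi}, show that the image is exactly $\Ui$, and get injectivity from Theorem \ref{Phi} and Proposition \ref{xi}.

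The sign check you ``grant'' is immediate and needs no reordering, because $\Phi^{-1}$ simply turns those $\circ$-products into products in $\mathbf{U}$. Concretely, $(\sqrt{-1})^{p(i)}\varsigma_i$ equals $1$ if $\tau i=i$ and $(-1)^{p(i)}$ if $\tau i\neq i$, which is exactly the sign in $\tau(\genE_i)$. Also, the hypothesis $\tau i\neq i$ for $i\in\mathbb{I}_\iso$ is used in Proposition \ref{chi}, not in this step.
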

\begin{proof}
By Theorem \ref{Phi} and Proposition \ref{xi}, a direct calculation shows that
\begin{align*}
    {\Phi}^{-1}\cdot \xi^{\imath}_\tau(\genE_i)
    &=\varrho^{p(i)}\varrho'^{p(i)}[(\sqrt{-1})^{p(i)}\genF_i+\tau(\genE_i)\genK'_i]
    \\&=(\sqrt{-1})^{p(i)}\varrho^{p(i)}\varrho'^{p(i)}B_i
    =(\sqrt{-1})^{p(i)}\tilde{\varrho}^{p(i)}B_i,\\
      {\Phi}^{-1}\cdot \xi^{\imath}_\tau(\genK_i)
    &=\varrho^{p(i)}\varrho'^{p(i)}\genK_{\tau i}\genK'_i
    =\tilde{\varrho}^{p(i)}\tilde{k}_{\tau i},\\
    {\Phi}^{-1}\cdot \xi^{\imath}_\tau(\varrho)
    &=-\sqrt{-1}\varrho\varrho'=-\sqrt{-1}\tilde{\varrho}.  
    \end{align*} 
The displayed formulas show that the image of ${\Phi}^{-1}\cdot \xi^{\imath}_\tau$ contains all generators of ${\mathbf U}^\imath$. Thus, there exists a  surjective algebra homomorphism $\Phi^{\imath}_\tau:({\hat{\mathbf{U}}}_q^{\geq0})^{ \imath}_\tau\to\mathbf{U}^{\imath}$, 
such that the following diagram commutes:
\begin{equation}
\begin{tikzcd}
({\hat{\mathbf{U}}}_q^{\geq0})^{ \imath}_\tau \arrow[r, "{\xi^{ \imath}_\tau}"] \arrow[d, dashed, "{\Phi}^{\imath}_\tau"] 
& {({\hat{\mathbf{U}}}_q^{\geq0} \otimes {\hat{\mathbf{U}}}_q^{\geq0})^{ \imath}} \arrow[d, "{\Phi}^{-1}"] \\
\Ui \arrow[r, hookrightarrow] 
& {{\mathbf{U}}}.
\end{tikzcd} 
\end{equation}
Since ${\Phi}$ is an isomorphism and ${\xi^{ \imath}_\tau}$ is an algebra embedding, $\Phi^{\imath}_\tau$ is also injective and hence an isomorphism.
\end{proof}
\begin{rmk}
    If  $i\in \mathbb{I}_\zero$ and $i\neq j$, then applying the formula from Theorem \ref{formula} to  (N-Iso) in Proposition \ref{P:SerreRelations} and using Theorem \ref{Phii}, we see that the relations coincide with those in \cite[Theorem 3.1(3.6)-(3.9)]{CLW21} and \cite[Theorem 4.2(4.9)-(4.10)]{Ch19}.
\end{rmk}

\section{Serre relations for \texorpdfstring{$\mathrm i$}{i}quantum supergroups}\label{application}

This section is devoted to computing the Serre relations for quasi-split {$\mathrm i$}quantum supergroups using the formula established in Theorem \ref{formula}, and to introducing a new notation to simplify their presentation.
In particular, all the Serre relations that occur in quasi-split iquantum supergroups of basic type are included.

\subsection{Serre relations for the {$\mathrm i$}Hopf algebra $({\hat{\mathbf{U}}}_q^{\geq0})_\tau^{ \imath}$}

Define the $q$-commutator on the iHopf algebra $({\hat{\mathbf{U}}}_q^{\geq0})_\tau^{ \imath}$ by 
\begin{equation}
    \begin{aligned}
        \Ad_q u(v)=u\circ v-(-1)^{p(u)p(v)}q^{(\al,\beta)}v\circ u,
    \end{aligned}
\end{equation}
with $u\in ({\hat{\mathbf{U}}}_q^{\geq0})^{ \imath}_{\tau,\al}$, $v\in({\hat{\mathbf{U}}}_q^{\geq0})^{ \imath}_{\tau,\beta}$.

\begin{prp}\label{trans1}
The following relations hold in $({\hat{\mathbf{U}}}_q^{\geq0})^{ \imath}_\tau$:
\begin{enumerate}
    \item For $i,j\in \mathbb{I}_\one$ with $a_{i,j}=0$,
    \begin{align*}
       \genE_i\circ\genE_{j}+\genE_{j}\circ\genE_i= \delta_{\tau i,j}
        \frac{1}{q_i-q_i^{-1}}\varrho\circ(\genK_j-\genK_i).
    \end{align*}
      
    \item {For $i\in \mathbb{I}_\zero\cup \mathbb{I}_\niso$ and $\tau i=i$,}
\begin{align*}
&\Ad_q \genE_i^{1-a_{i,j}}(\genE_j)
\\&=
\begin{cases}
0, &a_{i,j}=0,\\
(-1)^{p(i)}q_i\varrho^{p(i)}\circ\genK_i\circ\genE_j, & a_{i,j}=-1, \\
(q_i^2+(-1)^{p(i)})[2]_i(\genE_i\circ\genE_j-(-1)^{p(i)+p(i)p(j)}\genE_j\circ\genE_i)\circ\varrho^{p(i)}\circ\genK_i, & a_{i,j}=-2,\\
s_1(i,j)\genE_i\circ\genE_i\circ\genE_j\circ\varrho^{p(i)}\circ\genK_i    +s_2(i,j)\genE_j\circ\genE_i\circ\genE_i\circ\varrho^{p(i)}\circ\genK_i\\\quad
    +(-1)^{p(i)p(j)}s_3(i,j)\genE_i\circ\genE_j\circ\genE_i\circ\varrho^{p(i)}\circ\genK_i+s_4(i,j)\genK_i\circ\genK_i \circ\genE_j, &a_{i,j}=-3,
\end{cases}
\end{align*} 
where
{\small\begin{align*}
  &s_1(i,j)=-\Big(q_i+(-1)^{p(i)}q_i^{-1}\Big)[3]_i-(-1)^{p(i)}(2q_i-q_i^5)-q_i^3,\\
  &s_2(i,j)=-\Big(q_i^3+(-1)^{p(i)}q_i+ q_i^{-1}\Big)[3]_i-q_i+(1-(-1)^{p(i)})q_i^{-1},\\
  &s_3(i,j)=\Big(3+3(-1)^{p(i)}q_i^2+(-1)^{p(i)}q_i^{-2}+q_i^4\Big)[3]_i-2-(-1)^{p(i)}q_i^2,\\
  &s_4(i,j)=(-1)^{p(i)}q_i^2 [3]_i\Big( [3]_i-(1-(-1)^{p(i)})\Big).
\end{align*}}
\end{enumerate}
  
\end{prp}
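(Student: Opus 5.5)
The plan is to start from the Serre relations of Proposition \ref{P:SerreRelations}, which hold in ${\hat{\mathbf{U}}}_q^{\geq0}$ for the ordinary product. Each relation is expanded into a linear combination of monomials $E_I$. Every $E_I$ is then rewritten in the iHopf algebra using the conversion formula of Theorem \ref{formula} (for short words, Example \ref{EE} and Proposition \ref{new1} suffice). Since $({\hat{\mathbf{U}}}_q^{\geq0})^\imath_\tau$ has the same underlying space, the original relation becomes an identity in $({\hat{\mathbf{U}}}_q^{\geq0})^\imath_\tau$. Its leading part is the $\circ$-version of the relation, and the correction terms come from $\sum_M(-1)^s(\circ\prod\varphi\,\varrho^{p(i_x)}\circ\genK_{i_y})\circ E^\circ_{I^M}$. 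The only nonzero pairings are $\langle\tau(\genE_{i_y}),\genE_{i_x}\rangle$ with $i_x=\tau i_y$, so only pairs $(x,y)$ with $i_x=\tau i_y$ contribute. Proposition \ref{KE} is then used to move $\varrho$ and $\genK$ factors to a normal position.

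For part (1), $i,j\in\mathbb{I}_\one$ with $a_{i,j}=0$. The relation $\genE_i\genE_j+\genE_j\genE_i=0$ holds (for $i=j$ this is $2\genE_i^2=0$). Example \ref{EE}(1) gives
\[
\genE_i\genE_j=\genE_i\circ\genE_j-\langle\tau(\genE_j),\genE_i\rangle\varrho\circ\genK_j .
\]
The pairing vanishes unless $i=\tau j$, and then equals $-(-1)^{p(j)}/(q_i-q_i^{-1})$, using $\tau(\genE_j)=(-1)^{p(j)}\genE_{\tau j}$ for $\tau j\ne j$ together with the standing assumption that $\tau i\ne i$ for isotropic $i$. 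The symmetric term involves $q_j=q_i^{-1}$, which accounts for the sign difference between $\genK_j$ and $\genK_i$. Summing the two terms gives the stated formula.

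For part (2), $\tau i=i$, so among the letters of $\Ad_q\genE_i^{1-a_{i,j}}(\genE_j)$ only pairs of two $i$'s contribute (assuming $j\ne\tau j$ matters only through $\langle\tau\genE_j,\genE_i\rangle=0$ since $j\neq i$). The word length is at most $4$, so $M$ has at most two pairs. I would expand $\ad_q\genE_i^{n}(\genE_j)=\sum_k c_k\genE_i^{k}\genE_j\genE_i^{n-k}$ with the super $q$-binomial coefficients. For each word I apply Theorem \ref{formula}, computing $\varphi(x,y)$ from \eqref{def1}--\eqref{def3}. Here $\langle\genE_i,\genE_i\rangle=-1/(q_i-q_i^{-1})$, and $a_{i,\tau i}=a_{i,i}$ makes the $K$-twist factor trivial. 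The case $a_{i,j}=0$ has no pairs at all. For $a_{i,j}=-1$ there is a single pair, and the three correction terms collapse to one multiple of $\varrho^{p(i)}\circ\genK_i\circ\genE_j$, using $\genK_i\circ\genE_j=\genE_j\circ\genK_i$ (the exponent $a_{i,j}-a_{i,\tau j}=0$ in Proposition \ref{KE}). The original Serre relation is homogeneous, so the terms with a given $\varrho$-$\genK$ content must be regrouped, and the $\Ad_q$ on the left must be recognized among the converted monomials.

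I expect the main obstacle to be the case $a_{i,j}=-3$ (length-$4$ words). There, single pairs give degree-$2$ terms in $\genE$, and double pairs $M=\{(x_1,y_1),(x_2,y_2)\}$ give the $\genK_i\circ\genK_i\circ\genE_j$ term. The conditions on $f,g$ (nested versus crossing pairs) must be tracked carefully, and $\varrho\circ\varrho=-1$ enters when $p(i)=1$. I would organize this by tabulating, for each of the four words $\genE_i^{k}\genE_j\genE_i^{3-k}$, every admissible $M$ with its $\varphi$. I would then collect coefficients of the normal forms $\genE_i\circ\genE_i\circ\genE_j$, $\genE_j\circ\genE_i\circ\genE_i$, $\genE_i\circ\genE_j\circ\genE_i$ (each followed by $\circ\varrho^{p(i)}\circ\genK_i$) and $\genK_i\circ\genK_i\circ\genE_j$, which yields $s_1,\dots,s_4$ after simplification with $[3]_i$. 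As a check, I would specialize to $p(i)=0$ and compare with \cite{CLW21}. The case $a_{i,j}=-2$ is the same computation with length-$3$ words, using Example \ref{EE}(2) directly.
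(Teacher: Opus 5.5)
Your strategy is the paper's own: expand the Serre relation of Proposition \ref{P:SerreRelations} into monomials, convert each monomial with Theorem \ref{formula} (only pairs $(x,y)$ with $i_x=\tau i_y$ survive), and collect coefficients. The paper writes this out only for $a_{i,j}=-1$. Your treatment of part (1) is correct, including the sign coming from $q_{\tau i}=q_i^{-1}$, and so are the cases $a_{i,j}=0,-1$.

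There is, however, an off-by-one error in your bookkeeping for $a_{i,j}=-2,-3$. Followed as written, it would derail the computation. The element $\Ad_q\genE_i^{1-a_{i,j}}(\genE_j)$ contains $1-a_{i,j}$ copies of $\genE_i$, so its words have length $2-a_{i,j}$, not ``at most $4$''.

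For $a_{i,j}=-2$, the words are the four length-$4$ words $\genE_i^k\genE_j\genE_i^{3-k}$ with $0\le k\le 3$. Example \ref{EE}(2) does not cover them, so you need Proposition \ref{new1}, equivalently Theorem \ref{formula} with $s=1$. Each word has three $\genE_i$'s, hence three single pairs and no two-pair $M$. This is why the right-hand side is quadratic in the $\genE$'s.

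For $a_{i,j}=-3$, the relevant words are the five length-$5$ words $\genE_i^k\genE_j\genE_i^{4-k}$ with $0\le k\le 4$. They are not the four words $\genE_i^k\genE_j\genE_i^{3-k}$ you propose to tabulate.
\begin{itemize}
\item The six single pairs per word leave cubic monomials, not ``degree-$2$'' ones. These are exactly the normal forms $\genE_i\circ\genE_i\circ\genE_j$, $\genE_j\circ\genE_i\circ\genE_i$, $\genE_i\circ\genE_j\circ\genE_i$ that you list.
\item The three perfect matchings of the four $i$-positions (sequential, crossing, nested) produce the $\genK_i\circ\genK_i\circ\genE_j$ term. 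When $p(i)=1$, the factor $\varrho\circ\varrho=-1$ enters here.
\end{itemize}
With your word list there are only three $\genE_i$'s, so no $M$ with two pairs exists and the $s_4$ term could not arise. You would in effect be computing $\Ad_q\genE_i^{3}(\genE_j)$, which is not the Serre element for $a_{i,j}=-3$.

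Once the word lengths are corrected, your plan goes through exactly as in the paper.
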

\begin{proof}
    We only prove  part (2) for $a_{i,j}=-1$; the remaining cases follow by similar arguments and are hence omitted. For $a_{i,j}=-1$ and $\tau i=i$, we have 
\begin{align*}
    \ad_q\genE_i^2(\genE_j)
    &=\ad_q\genE_i(\genE_i\genE_j-(-1)^{p(i)p(j)}q_i^{-1}\genE_j\genE_i)
    \\
    &=\genE_i^2\genE_j-(-1)^{p(i)p(j)}q_i^{-1}\genE_i\genE_j\genE_i-(-1)^{p(i)+p(i)p(j)}q_i\genE_i\genE_j\genE_i+(-1)^{p(i)}\genE_j\genE_i^2.
\end{align*}
Applying Theorem \ref{formula}, we obtain 
\begin{align*}
    &\genE_i^2\genE_j =\genE_i\circ\genE_i\circ\genE_j+\frac{1}{q_i-q_i^{-1}}\varrho^{p(i)}\circ\genK_i\circ\genE_j,\\
    &\genE_i\genE_j\genE_i =\genE_i\circ\genE_j\circ\genE_i+(-1)^{p(i)p(j)}\frac{q_i}{q_i-q_i^{-1}}\varrho^{p(i)}\circ\genK_i\circ\genE_j,\\
    &\genE_j\genE_i^2 =\genE_j\circ\genE_i\circ\genE_i+\frac{1}{q_i-q_i^{-1}}\varrho^{p(i)}\circ\genK_i\circ\genE_j.
\end{align*}
Thus we have
\begin{align*}
    \Ad_q\genE_i^2(\genE_j)
    &=\ad_q\genE_i^2(\genE_j)+\frac{(-1)^{p(i)}q_i^2-(-1)^{p(i)}}{q_i-q_i^{-1}}\varrho^{p(i)}\circ\genK_i\circ\genE_j
    \\
    &=(-1)^{p(i)}q_i\varrho^{p(i)}\circ\genK_i\circ\genE_j.
\end{align*}
\end{proof}

\begin{prp}\label{trans2}
The following holds in $({\hat{\mathbf{U}}}_q^{\geq0})^{ \imath}_\tau$ whenever the given  subdiagram appears. 
\begin{enumerate}
    \item For  
$$\hspace{.75in}\xy
(-10,0)*{\otimes};(0,0)*{\otimes}**\dir{-};(0,0)*{\otimes};(10,0)*{\textup\fullmoon}**\dir{-};
(-10,-4)*{\scriptstyle i};(0,-4)*{\scriptstyle j};(10,-4)*{\scriptstyle k};
\endxy\quad( \tau i=j, \tau k\neq k)$$ 
$$
\Ad_q \genE_j\circ\Ad_{q}\genE_k\circ \Ad_{q} \genE_j (\genE_i)=q_j^{-2}\varrho\circ\genK_j\circ\Ad_q\genE_j(\genE_k)-\varrho\circ\genK_i\circ\Ad_q \genE_k(\genE_j).
$$
\item For  
$$\hspace{.75in}\xy
(-10,0)*{\textup\fullmoon};(0,0)*{\otimes}**\dir{-};(0,0)*{\otimes};(10,0)*{\otimes}**\dir{-};
(-10,-4)*{\scriptstyle i};(0,-4)*{\scriptstyle j};(10,-4)*{\scriptstyle k};
\endxy\quad( \tau j=k, \tau i\neq i)$$ 
$$
\Ad_q \genE_j\circ\Ad_{q}\genE_k\circ \Ad_{q} \genE_j (\genE_i)=q_j^{2}\varrho\circ\genK_j\circ\Ad_q\genE_j(\genE_i)-\varrho\circ\genK_i\circ\Ad_q \genE_i(\genE_j).
$$
\end{enumerate}

\end{prp}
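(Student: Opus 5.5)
The plan is to transport the (AB) relation $\ad_q\genE_j\cdot\ad_q\genE_k\cdot\ad_q\genE_j(\genE_i)=0$ of Proposition \ref{P:SerreRelations}, which holds in ${\hat{\mathbf{U}}}_q^{\geq0}$, into the iHopf algebra by means of Theorem \ref{formula}. This is the same route as in the proof of Proposition \ref{trans1}.

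First, expand the ordinary iterated $q$-commutator $\ad_q\genE_j\cdot\ad_q\genE_k\cdot\ad_q\genE_j(\genE_i)$ as a linear combination of the words $E_I$, where $I$ runs over the $12$ arrangements of the multiset $\{i,j,j,k\}$ that actually occur. The coefficients are signs times powers of $q$ determined by $a_{i,j}, a_{j,k}$ and the parities. The analogous expansion of $\Ad_q\genE_j\circ\Ad_q\genE_k\circ\Ad_q\genE_j(\genE_i)$ in terms of the $E_I^\circ$ has exactly the same coefficients, because $\Ad_q$ is defined with the same scalars as $\ad_q$.

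Next, apply Theorem \ref{formula} to each word of length $l=4$. Writing $E_I=E_I^\circ+o(E_I^\circ)$ and summing, the vanishing of the ordinary relation gives
\[
\Ad_q \genE_j\circ\Ad_{q}\genE_k\circ \Ad_{q} \genE_j (\genE_i)=-\sum_I c_I\, o(E^\circ_I).
\]
The only contributions to the right-hand side come from pairs $(x,y)$ with $\langle\tau(\genE_{i_y}),\genE_{i_x}\rangle\neq0$, that is, $\tau i_y=i_x$.

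In case (1), where $\tau i=j$ and $\tau k\neq k$, the surviving pairs are $\{i,j\}$ in either order. For $\tau k$ there are two possibilities. If $\tau k=k'$ lies outside $\{i,j,k\}$, then $k$ contributes nothing. If $\tau k\in\{i,j\}$, this is excluded, because $\tau$ is a bijection. Hence every nonzero term comes from $s=1$, with $M=\{(x,y)\}$, $\{i_x,i_y\}=\{i,j\}$. The case $s=2$ would need two disjoint pairs, but $k$ cannot be paired, so it does not occur. Each surviving term therefore has the shape $\varphi(x,y)\,\varrho\circ\genK_{i_y}\circ\genE_{a}\circ\genE_{b}$, where $(a,b)$ is the remaining pair drawn from $\{j,k\}$ or $\{k,j\}$. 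Case (2), where $\tau j=k$ and $\tau i\neq i$, is analogous. There the surviving pairs are $\{j,k\}$; since two copies of $j$ occur, a term with $s=2$ cannot arise, because only one $k$ is present.

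Then group the terms. Those with $i_y=j$, i.e.\ with $\varrho\circ\genK_j$ in front, should combine into a scalar multiple of $\Ad_q\genE_j(\genE_k)=\genE_j\circ\genE_k-(-1)^{p(j)p(k)}q^{(\al_j,\al_k)}\genE_k\circ\genE_j$. Those with $\varrho\circ\genK_i$ in front should likewise combine into a multiple of $\Ad_q\genE_k(\genE_j)$. For case (2), the analogous combinations are $\Ad_q\genE_j(\genE_i)$ and $\Ad_q\genE_i(\genE_j)$. To get there, evaluate $\varphi(x,y)$ from \eqref{def1}--\eqref{def3} using $\langle\genE_i,\genE_i\rangle=-1/(q_i-q_i^{-1})$, $q_{\tau i}=q_i^{-1}$ for odd $i\neq\tau i$, and $\tau(\genE_i)=-\genE_{\tau i}$ for odd $i$. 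Where needed, reorder $\varrho\circ\genK$ past the $\genE$'s using Proposition \ref{KE}.

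The main obstacle is this bookkeeping of coefficients. There are about a dozen words, each with up to two admissible pairs, and one has to verify that the resulting sums of $q$-powers collapse exactly to the claimed scalars $q_j^{\mp2}$ and $-1$. This collapse uses the (AB) condition $sgn_{ij}\neq sgn_{jk}$, i.e.\ $a_{j,i}=-a_{j,k}$ with suitable normalization, together with $(\al_j,\al_j)=0$. I would organize the computation by the position of the pair $(x,y)$ in the word. I would also cross-check the result against the $l=3$ formulas of Example \ref{EE}, treating $\Ad_q\genE_j(\genE_i)$ as a block, to catch sign errors.
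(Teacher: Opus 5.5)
Your route is the paper's: expand the (AB) relation into words $E_I$, rewrite each word by Theorem~\ref{formula}, and use $\sum_I c_I E_I=0$ to get $\Ad_q\genE_j\circ\Ad_q\genE_k\circ\Ad_q\genE_j(\genE_i)=-\sum_I c_I\,o(E_I^\circ)$. Only eight of the twelve arrangements actually occur. The paper also first discards the two words containing $\genE_j^2$ and merges $\genE_j\genE_i\genE_k\genE_j$ with $\genE_j\genE_k\genE_i\genE_j$ via $\genE_i\genE_k=\genE_k\genE_i$. Keeping all eight words, as you do, is harmless, because the two correction terms of each $\genE_j^2$-word cancel. Your structural analysis is right: only $s=1$ occurs, and pairs come only from the two $\tau$-swapped nodes. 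In case (1) the bookkeeping you deferred closes as claimed, since the correction terms sum to
\[
\varrho\circ\genK_i\circ(q_j\genE_j\circ\genE_k-\genE_k\circ\genE_j)+q_j^{-1}\varrho\circ\genK_j\circ(q_j^{-1}\genE_j\circ\genE_k-\genE_k\circ\genE_j),
\]
which is the stated right-hand side. That computation is the whole content of the paper's proof, so for (1) your proposal is a correct outline of it.

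In case (2), however, you assert without computation that the ``analogous'' grouping yields the printed formula, and it cannot. By your own observation only $j$ and $k$ can be paired, because $\tau j=k$ and $\tau i\notin\{i,j,k\}$. So every correction term begins with $\varrho\circ\genK_j$ or $\varrho\circ\genK_k$. Here $\Ad_q\genE_i(\genE_j)=\ad_q\genE_i(\genE_j)\neq 0$, and the triangular decomposition ${\hat{\mathbf{U}}}_q^{\geq0}\cong{\hat{\mathbf{U}}}_q^{+}\otimes{\hat{\mathbf{U}}}_q^{0}$ separates different Cartan parts. Hence a term $\varrho\circ\genK_i\circ\Ad_q\genE_i(\genE_j)$ can never arise. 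Doing the bookkeeping with $q_k=q_j^{-1}$ and $q^{(\alpha_i,\alpha_j)}=q_j^{-1}$ (as in the Corollary's Cartan matrix) gives
\[
\Ad_q\genE_j\circ\Ad_q\genE_k\circ\Ad_q\genE_j(\genE_i)=q_j^{2}\varrho\circ\genK_j\circ\Ad_q\genE_j(\genE_i)-\varrho\circ\genK_k\circ\Ad_q\genE_i(\genE_j).
\]
So the $\genK_i$ in part (2) is a misprint for $\genK_k$; part (1) of the Corollary, for the $\tau j=k$ diagram, has exactly this form. The paper's proof does not catch this because it works out only part (1). Your plan should state and prove this corrected identity rather than the printed one.
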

\begin{proof}
For the case $p(i)=p(j)=1,p(k)=0$ in part (1), the corresponding  Cartan matrix is
\begin{align*}
    \begin{pmatrix}
        0 &1 &0 \\
        -1 &0 &1\\
        0 &-1 &2\\
    \end{pmatrix}.
\end{align*}
  By Proposition \ref{P:SerreRelations}, a direct calculation shows that
\begin{align*}
\ad_q \cdot&\genE_j \cdot\ad_{q}\genE_k \cdot \ad_{q} \genE_j (\genE_i)\\
    &=\genE_{j} \genE_k \genE_{j} \genE_i
    -(-1)^{p(i)}(q_j+q_j^{-1})\genE_{j} \genE_k\genE_i \genE_{j}
    +(-1)^{p(k)(p(i)+1)+p(i)}\genE_{j}\genE_i \genE_{j} \genE_k\\&\qquad
    -(-1)^{p(k)+p(i)+1}\genE_k\genE_{j} \genE_i  \genE_{j}
    +(-1)^{p(k)p(i)}\genE_i\genE_{j} \genE_k \genE_{j}
    -(-1)^{p(k)(p(i)+1)}q_j^{-1}\genE_{j}^2 \genE_i \genE_k\\&\qquad
    -(-1)^{p(k)}q_j\genE_k \genE_i \genE_{j}^2\\
    &=\genE_{j} \genE_k \genE_{j} \genE_i
    -(-1)^{p(i)}(q_j+q_j^{-1})\genE_{j} \genE_k\genE_i \genE_{j}
    +(-1)^{p(k)(p(i)+1)+p(i)}\genE_{j}\genE_i \genE_{j} \genE_k\\&\qquad
    -(-1)^{p(k)+p(i)+1}\genE_k\genE_{j} \genE_i  \genE_{j}
    +(-1)^{p(k)p(i)}\genE_i\genE_{j} \genE_k \genE_{j}.  
\end{align*}

For $\tau i=j$, note that 
\begin{align*}
&\langle \tau(\genE_i),\genE_j\rangle=(-1)^{p(i)}\langle \genE_{\tau i},\genE_j\rangle=\frac{1}{q_j-q_j^{-1}},\\
&\langle \tau(\genE_j),\genE_i\rangle=(-1)^{p(j)}\langle \genE_{\tau j},\genE_i\rangle=\frac{1}{q_i-q_i^{-1}}=-\frac{1}{q_j-q_j^{-1}}.
\end{align*}
{Then using Theorem \ref{formula} we have }
\begin{align*}
    &\genE_{j} \genE_k \genE_{j} \genE_i=\genE_{j} \circ\genE_k\circ \genE_{j} \circ\genE_i-\frac{1}{q_j-q_j^{-1}}(-q_j\varrho\circ\genK_i\circ\genE_k\circ \genE_j+q_j^{2}\varrho\circ\genK_i\circ\genE_j\circ \genE_k),\\
    &\genE_{j} \genE_k\genE_i \genE_{j}=\genE_{j}\circ \genE_k\circ\genE_i \circ\genE_{j}-\frac{1}{q_j-q_j^{-1}}(\varrho\circ\genK_i\circ\genE_k\circ \genE_j-q_j^{-2}\varrho\circ\genK_j\circ\genE_j\circ \genE_k),\\
    &\genE_{j}\genE_i \genE_{j} \genE_k=\genE_{j}\circ\genE_i\circ \genE_{j} \circ\genE_k-\frac{1}{q_j-q_j^{-1}}(\varrho\circ\genK_i\circ\genE_j\circ \genE_k-q_j^{-1}\varrho\circ\genK_j\circ\genE_j\circ \genE_k),\\
    &\genE_k\genE_{j} \genE_i  \genE_{j}=\genE_k\circ\genE_{j} \circ\genE_i \circ \genE_{j}-\frac{1}{q_j-q_j^{-1}}(q_j\varrho\circ\genK_i\circ\genE_k\circ \genE_j-q_j^{-2}\varrho\circ\genK_j\circ\genE_k\circ \genE_j),\\
    &\genE_i\genE_{j} \genE_k \genE_{j}=\genE_i\circ\genE_{j}\circ \genE_k\circ \genE_{j}+\frac{1}{q_j-q_j^{-1}}(\varrho\circ\genK_j\circ\genE_k\circ \genE_j-q_j^{-1}\varrho\circ\genK_j\circ\genE_j\circ \genE_k).
\end{align*}
{Thus we obtain} 
   \begin{align*}
    \Ad_q \genE_j&\circ\Ad_{q}\genE_k\circ \Ad_{q} \genE_j (\genE_i)\\
    &=\ad_q \cdot\genE_j \cdot\ad_{q}\genE_k \cdot \ad_{q} \genE_j (\genE_i)
   +\frac{1}{q_j-q_j^{-1}}(-q_j\varrho\circ\genK_i\circ\genE_k\circ \genE_j+q_j^{2}\varrho\circ\genK_i\circ\genE_j\circ \genE_k)\\
    &\quad     +\frac{q_j+q_j^{-1}}{q_j-q_j^{-1}}(\varrho\circ\genK_i\circ\genE_k\circ \genE_j-q_j^{-2}\varrho\circ\genK_j\circ\genE_j\circ \genE_k)\\
    &\quad     
   -\frac{1}{q_j-q_j^{-1}}(\varrho\circ\genK_i\circ\genE_j\circ \genE_k-q_j^{-1}\varrho\circ\genK_j\circ\genE_j\circ \genE_k)\\
    &\quad     
   -\frac{1}{q_j-q_j^{-1}}(q_j\varrho\circ\genK_i\circ\genE_k\circ \genE_j-q_j^{-2}\varrho\circ\genK_j\circ\genE_k\circ \genE_j)\\
    &\quad     
   -\frac{1}{q_j-q_j^{-1}}(\varrho\circ\genK_j\circ\genE_k\circ \genE_j-q_j^{-1}\varrho\circ\genK_j\circ\genE_j\circ \genE_k)\\
    &=\varrho\circ\genK_i\circ(q_j\genE_j\circ \genE_k-\genE_k\circ \genE_j)
    +q_j^{-1}\varrho\circ\genK_j\circ(q_j^{-1}\genE_j\circ \genE_k-\genE_k\circ \genE_j)\\
    &=q_j^{-2}\varrho\circ\genK_j\circ\Ad_q\genE_j(\genE_k)-\varrho\circ\genK_i\circ\Ad_q \genE_k(\genE_j).
\end{align*} 
Similarly, the remaining cases can be verified.
\end{proof}

\begin{prp}\label{trans3}
For the subdiagram
$$\xy
(0,0)*{\textup\fullmoon};(7,5)*{\otimes}**\dir{-};(0,0)*{\textup\fullmoon};(7,-5)*{\otimes}**\dir{-};(7,5)*{\otimes};(7,-5)*{\otimes}**\dir{=};
(-4,0)*{\scriptstyle i};(10,5)*{\scriptstyle j};(10,-5)*{\scriptstyle k};
\endxy\quad(\tau j=k,\tau i=i),$$
the superalgebra $({\hat{\mathbf{U}}}_q^{\geq0})^{ \imath}_\tau$ satisfies
$$\Ad_q \genE_k\circ \Ad_q \genE_j (\genE_i)-\Ad_q \genE_j \circ \Ad_q \genE_k(\genE_i)=-q_j^{-1}\varrho\circ(\genK_k-\genK_j)\circ\genE_i.$$
\end{prp}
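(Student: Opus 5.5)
The plan mirrors the proof of Proposition \ref{trans2}: expand the quantum-supergroup Serre relation (D) of Proposition \ref{P:SerreRelations} into monomials, convert each monomial into $\circ$-products by Theorem \ref{formula}, and collect the correction terms.

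\textbf{Expanding relation (D).} Here $p(i)=0$ and $p(j)=p(k)=1$. From the subdiagram we have $a_{j,k}\neq0$, $a_{j,i}=a_{k,i}=-1$ after normalisation, and $a_{i,j}=a_{i,k}=-1$. Expand
$$\ad_q\genE_k\cdot\ad_q\genE_j(\genE_i)-\ad_q\genE_j\cdot\ad_q\genE_k(\genE_i)$$
as a linear combination of the six degree-three monomials in $\genE_i,\genE_j,\genE_k$ in which each index appears once. Relation (D) says this combination vanishes in ${\hat{\mathbf{U}}}_q^{\geq0}$. Performing the same expansion for the $\Ad_q$ version gives the identical coefficients, with every product replaced by $\circ$. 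Hence the left-hand side of the claim equals the $\circ$-version of the expansion, which in turn equals minus the sum, over the six monomials, of the correction terms $o(E^\circ_I)$ weighted by their coefficients.

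\textbf{Computing the corrections.} For $l=3$ we have $s=1$ only, and the explicit three-term formula of Example \ref{EE}(2) applies. Since $\tau i=i$, $\tau j=k$ and the indices are distinct, the pairing $\langle\tau(\genE_y),\genE_x\rangle$ is nonzero only when $\{x,y\}=\{j,k\}$. Its values are
$$\langle\tau(\genE_k),\genE_j\rangle=-\langle\genE_j,\genE_j\rangle=\frac{1}{q_j-q_j^{-1}},\qquad \langle\tau(\genE_j),\genE_k\rangle=\frac{1}{q_k-q_k^{-1}}=-\frac{1}{q_j-q_j^{-1}},$$
where the second equality uses $q_k=q_j^{-1}$. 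Consequently every correction term has the form $\varrho\circ\genK_j\circ\genE_i$ or $\varrho\circ\genK_k\circ\genE_i$, since the surviving letter is always $\genE_i$. The coefficient of each such term is a product of the pairing value, a sign $(-1)^{p(\cdot)p(\cdot)}$, and powers $q_j^{-a_{j,k}}$ or $q_i^{a_{i,\tau k}-a_{i,k}}$; the latter power is $1$, because $a_{i,\tau k}=a_{i,j}=a_{i,k}$.

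\textbf{Collecting terms.} For each of the six monomials I record its $\varrho\circ\genK_j\circ\genE_i$ and $\varrho\circ\genK_k\circ\genE_i$ contributions, multiply by the coefficient from the expansion, and sum. The required reorderings $\genE_i\circ\varrho\circ\genK$ are handled by Proposition \ref{KE}, which gives $\genK\circ\genE_i=\genE_i\circ\genK$ up to a power of $q$ that is trivial here, and $\varrho$ is central. The target identity is that the total equals
$$-q_j^{-1}\varrho\circ(\genK_k-\genK_j)\circ\genE_i,$$
that is, the $\genK_k$ coefficient is $-q_j^{-1}$ and the $\genK_j$ coefficient is $+q_j^{-1}$. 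This should drop out of factors like $\frac{q_j-q_j^{-1}}{q_j-q_j^{-1}}$, exactly as in the proof of Proposition \ref{trans2}. The antisymmetry of the result under $j\leftrightarrow k$ serves as a consistency check.

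\textbf{Main obstacle.} The hard part is sign and $q$-power bookkeeping, in particular pinning down $(\al_j,\al_k)$ and $(\al_i,\al_j)$ with the $sgn$ conventions, the odd--odd signs $(-1)^{p(j)p(k)}=-1$, and the relation $q_k=q_j^{-1}$ from the $\tau$-compatibility of the $q_i$. I would first fix the explicit Cartan matrix for the $D$-type diagram (for instance rows $(2,-1,-1)$, $(-1,0,1)$, $(-1,1,0)$ in a suitable normalisation) and then carry out the six conversions carefully.
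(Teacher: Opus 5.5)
Your route is the one the paper takes: expand relation (D) of Proposition~\ref{P:SerreRelations} into the six monomials, convert each one with Example~\ref{EE}(2), and collect. The problem is the collecting step, which you only assert.

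First, your Cartan data are wrong. With rows $(2,-1,-1),(-1,0,1),(-1,1,0)$, symmetrizability forces $d_j=d_k$. That contradicts $q_k=q_j^{-1}$, which you use for the pairing, and it ignores the double bond. The paper's matrix has rows $(2,-1,-1),(1,0,-2),(-1,2,0)$. Hence $(\alpha_i,\alpha_j)=(\alpha_i,\alpha_k)=d_j$ and $(\alpha_j,\alpha_k)=-2d_j$, which gives $q^{(\alpha_j,\alpha_i)}=q_j$ and $q^{(\alpha_k,\alpha_i+\alpha_j)}=q_j^{-1}$. Relation (D) then reads
\begin{align*}
\genE_k\genE_j\genE_i-\genE_j\genE_k\genE_i-(q_j+q_j^{-1})(\genE_k\genE_i\genE_j-\genE_j\genE_i\genE_k)-\genE_i\genE_j\genE_k+\genE_i\genE_k\genE_j=0.
\end{align*}

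Second, the bookkeeping does not give the displayed right-hand side. Put $A=\langle\tau(\genE_k),\genE_j\rangle=(q_j-q_j^{-1})^{-1}$, $P_j=\varrho\circ\genK_j\circ\genE_i$ and $P_k=\varrho\circ\genK_k\circ\genE_i$. The six conversions are
\begin{align*}
&\genE_k\genE_j\genE_i=\genE_k\circ\genE_j\circ\genE_i+AP_j, &&\genE_j\genE_k\genE_i=\genE_j\circ\genE_k\circ\genE_i-AP_k,\\
&\genE_k\genE_i\genE_j=\genE_k\circ\genE_i\circ\genE_j+q_j^{-1}AP_j, &&\genE_j\genE_i\genE_k=\genE_j\circ\genE_i\circ\genE_k-q_j^{-1}AP_k,\\
&\genE_i\genE_k\genE_j=\genE_i\circ\genE_k\circ\genE_j+AP_j, &&\genE_i\genE_j\genE_k=\genE_i\circ\genE_j\circ\genE_k-AP_k.
\end{align*}
Substituting these, every correction is a multiple of $P_j+P_k$. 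The total is $A(1-q_j^{-2})(P_j+P_k)=q_j^{-1}(P_j+P_k)$. So what the method actually proves is
$$\Ad_q\genE_k\circ\Ad_q\genE_j(\genE_i)-\Ad_q\genE_j\circ\Ad_q\genE_k(\genE_i)=-q_j^{-1}\varrho\circ(\genK_k+\genK_j)\circ\genE_i,$$
not the stated formula with $\genK_k-\genK_j$.

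Your antisymmetry check, applied correctly, says the same thing. The left side is odd under $j\leftrightarrow k$. But the scalar in front of the corrections is also odd, because $\langle\tau(\genE_j),\genE_k\rangle=-\langle\tau(\genE_k),\genE_j\rangle$. So the $\genK$-part must be even. A difference $\genK_k-\genK_j$ could only arise if the two pairings were equal, that is, if $q_k=q_j$.

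The paper's own proof reaches exactly this intermediate expression: two equal coefficients $\frac{q_j^{-2}-1}{q_j-q_j^{-1}}$ on $\varrho\circ\genK_k\circ\genE_i$ and $\varrho\circ\genK_j\circ\genE_i$. It appears to lose a sign only in its final equality. So your "should drop out" step cannot be completed as targeted. Carried out correctly, your plan (and the paper's) establishes the identity with $\genK_k+\genK_j$.
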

\begin{proof}
    For the given subdiagram, the corresponding Cartan matrix is 
    \begin{align*}
    \begin{pmatrix}
        2 &-1 &-1 \\
        1 &0 &-2\\
        -1 &2 &0\\
    \end{pmatrix}.
\end{align*}
{For $\tau j=k, \tau i=i$, by Proposition \ref{P:SerreRelations} and Theorem \ref{formula}, a direct calculation shows that}
\begin{align*}
 \Ad_q \genE_k&\circ \Ad_q \genE_j (\genE_i)-\Ad_q \genE_j \circ \Ad_q \genE_k(\genE_i)\\
 &=\ad_q \genE_k \ad_q \genE_j (\genE_i)-\ad_q \genE_j \ad_q \genE_k(\genE_i)\\
 &\qquad
 +\frac{q_j^{-2}-1}{q_j-q_j^{-1}}\varrho\circ \genK_k\circ\genE_i+\frac{q_j^{-2}-1}{q_j-q_j^{-1}}\varrho\circ\genK_j\circ\genE_i\\
 &=-q_j^{-1}\varrho\circ(\genK_k-\genK_j)\circ\genE_i.
\end{align*}    
\end{proof}

We conclude this subsection by deriving several relations for the iHopf algebra $({\hat{\mathbf{U}}}_q^{\geq0})^{ \imath}_\tau$ of Kac–Moody type.
For the subdiagram 
$$\xy
(-10,0)*{\fullmoon};(0,0)*{\otimes}**\dir{=};(0,0)*{\otimes};(10,0)*{\otimes}**\dir{-};(-5,0)*{<};
(-10,-4)*{\scriptstyle i};(0,-4)*{\scriptstyle j};(10,-4)*{\scriptstyle k};
\endxy$$
and 
$$\xy
(-10,0)*{\fullmoon};(0,0)*{\fullmoon}**\dir{-};(0,0)*{\fullmoon};(10,0)*{\otimes}**\dir{-};(15,0)*{<};(10,0)*{\otimes};(20,0)*{\fullmoon}**\dir{=};
(-10,-4)*{\scriptstyle i};(0,-4)*{\scriptstyle j};(10,-4)*{\scriptstyle k};(20,-4)*{\scriptstyle l};
\endxy$$
 the corresponding Cartan matrices are 
\begin{align*}
      \begin{pmatrix}
        2 &-2 &0 \\
        -1 &0 &1\\
        0 &-1 &0\\
    \end{pmatrix},  
    \qquad 
    \begin{pmatrix}
        2 &-1 &0 &0 \\
        -1 &2 &-1&0\\
        0 &-1 &0&2\\
        0 &0 &-1&2\\
    \end{pmatrix}.
    \end{align*}
Applying Theorem \ref{formula} to 
the defining relations of affine quantum supergroups given in \cite{Ya99}, 
we obtain the corollary as follows.

\begin{cor}{The following relations hold in $({\hat{\mathbf{U}}}_q^{\geq0})^{ \imath}_\tau$:}
    \begin{enumerate}
        \item For 
        $$\xy
(-10,0)*{\textup\fullmoon};(0,0)*{\otimes}**\dir{=};(0,0)*{\otimes};(10,0)*{\otimes}**\dir{-};(-5,0)*{<};
(-10,-4)*{\scriptstyle i};(0,-4)*{\scriptstyle j};(10,-4)*{\scriptstyle k};
\endxy\quad (\tau j =k)$$
$$
\Ad_q \genE_j\circ\Ad_{q}\genE_k\circ \Ad_{q} \genE_j (\genE_i)
=q_j^{2}\varrho\circ\genK_j\circ\Ad_q\genE_j(\genE_i)-\varrho\circ\genK_k\circ\Ad_q \genE_i(\genE_j).
$$

\item {For } 
$$\xy
(-10,0)*{\textup\fullmoon};(0,0)*{\textup\fullmoon}**\dir{-};(0,0)*{\textup\fullmoon};(10,0)*{\otimes}**\dir{-};(15,0)*{<};(10,0)*{\otimes};(20,0)*{\textup\fullmoon}**\dir{=};
(-10,-4)*{\scriptstyle i};(0,-4)*{\scriptstyle j};(10,-4)*{\scriptstyle k};(20,-4)*{\scriptstyle l};
\endxy\quad (\tau i=j)$$
\begin{align*}
 \Ad_q \genE_k&\circ \Ad_q \genE_j \circ \Ad_q \genE_k\circ \Ad_q \genE_l\circ \Ad_q \genE_k\circ \Ad_q \genE_j (\genE_i)\\&=-(q_j-q_j^{-1})\genK_j\circ(q_j^{-1}\genE_k\circ\genE_j\circ\genE_k\circ\genE_l\circ\genE_k+\genE_k\circ\genE_l\circ\genE_k\circ\genE_j\circ\genE_k).   
\end{align*}

    \end{enumerate}
\end{cor}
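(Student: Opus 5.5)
The plan follows Propositions \ref{trans2} and \ref{trans3}. First I expand each $q$-adjoint word using the ordinary multiplication and take the corresponding Serre relation of the (affine) quantum supergroup from \cite{Ya99}. For (1) this is the (CD1)-type relation of the first Cartan matrix, in its affine-Kac--Moody form $\ad_q\genE_j\cdot\ad_q\genE_k\cdot\ad_q\genE_j(\genE_i)=0$. For (2) it is the (CD2)-type relation $\ad_q \genE_k\cdot \ad_q \genE_j \cdot \ad_q \genE_k\cdot \ad_q \genE_l\cdot \ad_q \genE_k\cdot \ad_q \genE_j (\genE_i)=0$.

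Expanding these gives
\[
\ad_q \genE_j\cdot\ad_q\genE_k\cdot\ad_q\genE_j(\genE_i)=\sum_w c_w E_w,
\]
a linear combination of monomials $E_w$ with $q$-power coefficients $c_w$. The same combination of iHopf monomials, $\sum_w c_w E_w^\circ$, is the $\Ad_q$ expression. The point to use is that the $q$-commutator coefficients are the same in both, because $\Ad_q$ and $\ad_q$ use the same bilinear form on degrees. Hence
\[
\Ad_q(\cdots)=\ad_q(\cdots)-\sum_w c_w\, o(E_w^\circ)=-\sum_w c_w\, o(E_w^\circ),
\]
where each $o(E_w^\circ)$ comes from Theorem \ref{formula}.

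Next I compute each $o(E_w^\circ)$. The pairing $\langle\tau(\genE_{i_y}),\genE_{i_x}\rangle$ is nonzero only when $i_x=\tau i_y$, so only a few $M\in\mathcal M_s(S)$ contribute.

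\emph{Case (1), $\tau j=k$.} The pairs must be of type $(j,k)$ or $(k,j)$, and $\genE_i$ never pairs. For words of length $4$ with two $j$'s and one $k$, only $s=1$ occurs. The coefficient $\langle\tau(\genE_k),\genE_j\rangle=(-1)^{p(k)}\langle\genE_j,\genE_j\rangle$ must be evaluated using the $\tau$-twisted values $q_k=q_j^{-1}$. Each surviving term has the form $\varrho\circ\genK_{k\text{ or }j}\circ(\text{a word in }\genE_i,\genE_j)$.

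\emph{Case (2), $\tau i=j$.} The pairs must be of type $(i,j)$ or $(j,i)$. There is exactly one $i$, and it sits last, so the only admissible pairs are $(x,y)$ with $i_x=j$ and $y$ the final position. Since $i$ is even, $\varrho^{p(j)}=1$. So each word contributes terms $\genK_i\circ(\text{the word with }\genE_j\text{ and }\genE_i\text{ removed})$. Then $\genK_i$ must be rewritten as a multiple of $\genK_j$, or the pairing must be expressed through $\genK_j$. I would sort the resulting length-$5$ words in $\genE_k,\genE_l,\genE_j$ into the displayed combination.

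I then collect terms. In (1) the residual words are $\genE_j\circ\genE_i$ and $\genE_i\circ\genE_j$ with prefactors $\varrho\circ\genK_j$ and $\varrho\circ\genK_k$. These should assemble into $q_j^{2}\varrho\circ\genK_j\circ\Ad_q\genE_j(\genE_i)-\varrho\circ\genK_k\circ\Ad_q\genE_i(\genE_j)$, exactly as in Proposition \ref{trans2}(2). I also use Proposition \ref{KE} to move $\genK$'s and $\varrho$ to the left, picking up the factors $q_i^{a_{i,j}-a_{i,\tau j}}$.

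The main obstacle is (2). The $6$-fold nested commutator expands into many monomials, and the surviving lower terms must cancel down to just the two words $\genE_k\circ\genE_j\circ\genE_k\circ\genE_l\circ\genE_k$ and $\genE_k\circ\genE_l\circ\genE_k\circ\genE_j\circ\genE_k$. To keep this manageable, I would exploit the nested structure instead of expanding fully. Because the only admissible pairing involves the innermost $\genE_i$ and an $\genE_j$, I would first compute the inner correction
\[
\Ad_q\genE_j(\genE_i)=\ad_q\genE_j(\genE_i)+(\text{scalar})\,\genK_i .
\]
I would then propagate this through the outer commutators, using that $\genK_i$ $q$-commutes with each $\genE$ via Proposition \ref{KE}. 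Terms where the pairing uses the $j$ from the second $\ad_q\genE_j$ layer require a separate check. I would also use the Serre relations $\genE_k^2=0$ and the $(k,l)$ relation to kill terms. The final factor $-(q_j-q_j^{-1})$ should emerge from $\langle\tau(\genE_i),\genE_j\rangle^{-1}$-type normalizations combined with $[2]_j$.
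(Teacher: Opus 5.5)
Your overall strategy is the paper's: take the Serre relation from \cite{Ya99}, write $\Ad_q(\cdots)=-\sum_w c_w\,o(E_w^\circ)$ via Theorem~\ref{formula}, and collect the lower terms. For (1) your bookkeeping is right: only $\{j,k\}$-matchings occur, $\genE_i$ never pairs, and the prefactors are $\varrho\circ\genK_j$ and $\varrho\circ\genK_k$. One small correction: the relation $\ad_q\genE_j\,\ad_q\genE_k\,\ad_q\genE_j(\genE_i)=0$ you use there is the (AB)-type relation for the $<$-oriented double bond, not (CD1).

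The gap is in (2). It is not true that $\genE_i$ sits last in every monomial. Already $\ad_q\genE_j(\genE_i)=\genE_j\genE_i-q_j^{-1}\genE_i\genE_j$, and since $\langle\tau(\genE_j),\genE_i\rangle=\langle\genE_i,\genE_i\rangle\neq 0$, matchings with $\genE_i$ before an $\genE_j$ contribute terms $\genK_j\circ(\cdots)$. Concretely,
\[
\Ad_q\genE_j(\genE_i)=\ad_q\genE_j(\genE_i)-\frac{1}{q_j-q_j^{-1}}\bigl(\genK_i-q_j^{-1}\genK_j\bigr),
\]
not $\ad_q\genE_j(\genE_i)+(\text{scalar})\,\genK_i$. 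Your fallback of ``rewriting $\genK_i$ as a multiple of $\genK_j$'' is impossible, because $\genK_i\circ w$ and $\genK_j\circ w'$ are linearly independent.

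Worse, the $\genK_i$-type corrections you keep cancel completely. By Proposition~\ref{KE}, $\genE_k\circ\genK_i=q_i^{a_{i,\tau k}}\genK_i\circ\genE_k=q_j^{-1}\genK_i\circ\genE_k$, which is exactly the coefficient $q^{(\alpha_k,\alpha_i+\alpha_j)}$ of the next $\Ad_q\genE_k$. So the inner $\genK_i$-term dies at once. The $\genK_i$-term created when the outer $\genE_j$ contracts with $\genE_i$ becomes a multiple of $\genK_i\circ\ad_q\genE_k\,\ad_q\genE_k(\cdots)=0$, since $\genE_k^2=0$.

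The whole right-hand side comes from the inner $\genK_j$-term you discard. The relation $\genE_k\circ\genK_j=q_j\genK_j\circ\genE_k$ produces the factor $q_j-q_j^{-1}$ at the first $\Ad_q\genE_k$. Propagating through $\Ad_q\genE_l$ and $\Ad_q\genE_k$, using $\genE_k^2=0$, gives
\[
\Ad_q\genE_k\circ\Ad_q\genE_l\circ\Ad_q\genE_k\circ\Ad_q\genE_j(\genE_i)=\ad_q\genE_k\,\ad_q\genE_l\,\ad_q\genE_k\,\ad_q\genE_j(\genE_i)-q_j(q_j-q_j^{-1})\,\genK_j\circ\genE_k\circ\genE_l\circ\genE_k .
\]
The last two layers $\Ad_q\genE_j$ and $\Ad_q\genE_k$ then yield exactly the two displayed words. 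So the prefactor $-(q_j-q_j^{-1})$ comes from this $q$-twist of $\genK_j$ against $\genE_k$, not from a $[2]_j$ normalization. As stated, your enumeration cannot produce the claimed identity.
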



\subsection{Reformulation of Serre relations}
 To encode the matching structure explicitly, we now introduce formal symbols $\mathfrak{e}_{i_{t}}~(1\leq t\leq l)$ as placeholders for pairing positions, marking which factors are to be paired.

 For $T_{k}=\{t_1<\cdots< t_k\}\subseteq \{1,\cdots,l\}$, define the operator
 $$\mathfrak{e}_{T_k}(\genE_{i_1}\circ\cdots\circ\genE_{i_l})=(\mathop{\circ\prod}\limits_{u=1}^{t_1 -1}\genE_{i_u}) \circ\mathfrak{e}_{i_{t_1}}\circ (\mathop{\circ\prod}\limits_{u=t_1 +1}^{t_2 -1}\genE_{i_u}) \circ\mathfrak{e}_{i_{t_2}}\circ\cdots\circ\mathfrak{e}_{i_{t_k}}
 \circ(\mathop{\circ\prod}\limits_{u=t_k +1}^{l}\genE_{i_u}),$$
 where $\mathfrak{e}_{i_{t}}~(t\in T_k)$ are formal pairing symbols (not algebra generators) marking a pairing  at position $t$.
 For an even integer $k$, let $\mathcal{M}_{\frac{k}{2}}(T_k):=\{M\in\mathcal M_{\frac{k}{2}}(S)\mid T(M)=T_k\}$. For each $M\in \mathcal{M}_{\frac{k}{2}}(T_k)$, apply the matching rule
 \begin{align}\label{evaluate}
    (\mathfrak{e}_{i_{x}}, \mathfrak{e}_{i_y}) \mapsto \varphi(x,y)\cdot \varrho^{p(i_{x})}\circ \genK_{i_{y}}, \qquad (t_x, t_y)\in M.
 \end{align}
 Then $\mathfrak{e}_{T_k}(\genE_{i_1}\circ\cdots\circ\genE_{i_l})$ evaluates to 
 \begin{align*}
     \sum\limits_{M\in \mathcal{M}_{\frac{k}{2}}(T_k)}(-1)^{\frac{k}{2}}\Big(\mathop{\circ\prod}\limits_{{(x,y)\in M}}\varphi(x,y)\cdot 
    \varrho^{p(i_{x})}\circ \genK_{i_{y}}\Big) \circ E_{I^M}^\circ. 
 \end{align*}
 For odd $k$, the unpaired symbol cannot be evaluated, so $\mathfrak{e}_{T_k}(\genE_{i_1}\circ\cdots\circ\genE_{i_l})=0$.
\begin{rmk} 
Summing over all $k$-subsets yields
\begin{align*}
   \mathcal{F}_{k} := \sum\limits_{1\leq t_1<t_2\cdots<t_k\leq l}\mathfrak{e}_{T_k}(\genE_{i_1}\circ\cdots\circ\genE_{i_l}),
\end{align*}
the sum of all terms that contain precisely $k $  of the $\mathfrak{e}_{i_u}$'s  (and $l-k$  of the  $\genE_{i_u}$'s ).
Consequently,
 \begin{align*}  \genE_{i_1}\circ\cdots\circ\genE_{i_l}+\mathcal{F}_1 +\mathcal{F}_2 +\cdots +\mathcal{F}_l=(\genE_{i_1} + \mathfrak{e}_{i_1})\circ\cdots \circ(\genE_{i_l} + \mathfrak{e}_{i_l}). 
\end{align*}
Upon evaluation via ordered  matching, each $\mathcal{F}_{k}$ with $k=2s$ becomes 
$$\sum\limits_{M\in \mathcal{M}_s(S)}(-1)^s\Big(\mathop{\circ\prod}\limits_{{(x,y)\in M}}\varphi(x,y)\cdot  
    \varrho^{p(i_{x})}\circ \genK_{i_{y}}\Big) \circ E_{I^M}^\circ,$$
and then $(\genE_{i_1} + \mathfrak{e}_{i_1})\circ\cdots \circ(\genE_{i_l} + \mathfrak{e}_{i_l})$ evaluates to  
$E^{\circ}_{I}+o(E^{\circ}_{I})$.
\end{rmk}

For notational simplicity, set 
\begin{align*}
    \genE_i^{\star}=\genE_i+\mathfrak{e}_{i} \qquad(i\in\mathbb{I}),
\end{align*}
with multiplication normalized via the matching rule \eqref{evaluate}. 
The resulting relations for $\genE_i^\star$ in iquantum supergroups encode the Serre relations for $\genE_i$ in Borel quantum supergroups.

Define the $q$-commutator on $\genE_i^{\star},\genE_j^{\star}$ by
$$\Ad_q \genE_i^{\star}(\genE_j^{\star})=\genE_i^{\star}\circ \genE_j^{\star}-(-1)^{p(i)p(j)}q_i^{a_{i,j}}\genE_j^{\star}\circ \genE_i^{\star}.$$

It follows directly from the classification that a quasi-split iquantum supergroup of basic type with
$\mathbb{I}_\one\neq\emptyset$ can occur only in types $A(m,n)$, $B(m,n+1)$ and $D(m,n+1)$. Thus, we reformulate all the relations for these iquantum supergroups as follows.

\begin{thm}\label{star} 
For any $\tau$ with $\tau i\neq i$ if $i\in \mathbb{I}_\iso$, the superalgebra $({\hat{\mathbf{U}}}_q^{\geq0})_\tau^{ \imath}$ of basic type satisfies the following relations whenever the given Dynkin subdiagram appears: 

\begin{enumerate}
\item For $i,j\in \mathbb{I}_\one$ with $a_{ij}=0$, 
$$\genE_i^{\star}\circ\genE_j^{\star}=-\genE_j^{\star}\circ\genE_i^{\star}.$$
\item For $i\in \mathbb{I}_\zero\cup \mathbb{I}_\niso$ and $i\neq j$ with $a_{i,j}\in\{0,-1,-2\}$, 
$$\Ad_q \genE_i^{{\star} ^{1-a_{i,j}}}(\genE_j^{\star})=0.
$$
\item For  $sgn_{ij}\neq sgn_{jk}$,
$$\hspace{.75in}\xy
(-10,0)*{\odot};(0,0)*{\otimes}**\dir{-};(0,0)*{\otimes};(10,0)*{\odot}**\dir{-};
(-10,-4)*{\scriptstyle i};(0,-4)*{\scriptstyle j};(10,-4)*{\scriptstyle k};
\endxy$$ 
$$
\Ad_q \genE_j^{\star}\circ\Ad_{q}\genE_k^{\star}\circ \Ad_{q} \genE_j^{\star} (\genE_i^{\star})=0.
$$
\item For $\tau j=k$,
$$\xy
(0,0)*{\textup\fullmoon};(7,5)*{\otimes}**\dir{-};(0,0)*{\textup\fullmoon};(7,-5)*{\otimes}**\dir{-};(7,5)*{\otimes};(7,-5)*{\otimes}**\dir{=};
(-4,0)*{\scriptstyle i};(10,5)*{\scriptstyle j};(10,-5)*{\scriptstyle k};
\endxy\quad $$
$$\Ad_q \genE_k^{\star}\circ \Ad_q \genE_j^{\star} (\genE_i^{\star})=\Ad_q \genE_j^{\star} \circ \Ad_q \genE_k^{\star}(\genE_i^{\star}).$$
\end{enumerate}

    \end{thm}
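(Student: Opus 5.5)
The plan is to reduce each starred relation to the corresponding Serre relation of Proposition \ref{P:SerreRelations} in ${\hat{\mathbf{U}}}_q^{\geq0}$, with Theorem \ref{formula} as the bridge. By the Remark preceding the theorem, for any word $I=(i_1,\dots,i_l)$ the formal product $\genE_{i_1}^{\star}\circ\cdots\circ\genE_{i_l}^{\star}$ evaluates under the matching rule \eqref{evaluate} to $E^{\circ}_I+o(E^{\circ}_I)$. By Theorem \ref{formula} this equals the ordinary product $E_I=\genE_{i_1}\cdots\genE_{i_l}$, regarded as an element of $({\hat{\mathbf{U}}}_q^{\geq0})_\tau^{\imath}$ (the underlying vector spaces coincide). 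Extending linearly, any noncommutative polynomial $P$ in the $\genE_i^{\star}$ with $\circ$ evaluates to the same polynomial $P$ in the $\genE_i$ with the original multiplication. The construction of $\Ad_q$ on starred symbols uses the same $q$-commutator coefficients $(-1)^{p(i)p(j)}q_i^{a_{i,j}}$ as $\ad_q$. Expanding a nested $\Ad_q$-expression in the $\genE^{\star}$'s into monomials therefore gives, word by word, the same linear combination of words as the nested $\ad_q$-expression in the $\genE$'s.

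The key steps, in order, are as follows.

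First, check that the matching-rule evaluation is well defined and additive, so that it commutes with the bilinear expansion of $\Ad_q$. The $\mathfrak e$-symbols are assigned positions within each word, and $\varphi(x,y)$ depends only on the word $I$ and the matching $M$. So each monomial word is evaluated independently, and linear combinations are respected.

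Second, for each item, expand the left minus right side as $\sum_I c_I\,\genE^{\star}_{i_1}\circ\cdots\circ\genE^{\star}_{i_l}$. Under evaluation this becomes $\sum_I c_I E_I$, which is precisely the Serre element from Proposition \ref{P:SerreRelations}:
\begin{itemize}
\item item (1) corresponds to (Iso);
\item item (2) corresponds to (N-Iso);
\item item (3) corresponds to (AB);
\item item (4) corresponds to (D), where node $i$ is even.
\end{itemize}
That element vanishes in ${\hat{\mathbf{U}}}_q^{\geq0}$, hence also in the iHopf algebra as a vector.

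Third, justify the restrictions in the statement. The classification remark says that quasi-split iquantum supergroups with $\mathbb{I}_\one\neq\emptyset$ occur only in types $A$, $B$, $D$. In these types only diagrams (Iso), (N-Iso) with $a_{i,j}\ge -2$, (AB) and (D) occur, which explains the restriction $a_{i,j}\in\{0,-1,-2\}$ in item (2). The hypothesis that $\tau i\neq i$ for $i\in\mathbb{I}_\iso$ is only needed so that the iHopf algebra is the one realizing $\Ui$ via Theorem \ref{Phii}. It does not enter the identity itself.

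The main obstacle is the first step: making rigorous that the starred formalism is a homomorphic image of the free algebra on the $\genE_i$. One subtlety is that the $\varphi(x,y)$ depend on the entire word, including the positions $u<x$ via the factor $q_{i_u}^{a_{i_u,\tau i_y}-a_{i_u,i_y}}$, and not only on the paired letters. So the evaluation is not multiplicative on partial products. I would handle this by defining the evaluation map $\mathrm{ev}$ on the free algebra by $\mathrm{ev}(\genE^{\star}_{i_1}\circ\cdots\circ\genE^{\star}_{i_l}):=E^{\circ}_I+o(E^{\circ}_I)$ on words and extending linearly. Then $\mathrm{ev}$ coincides with the composite of the canonical map from the free algebra to ${\hat{\mathbf{U}}}_q^{\geq0}$ and the identity of underlying spaces. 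That composite kills the Serre ideal, which gives all four relations simultaneously. As a sanity check, I would compare item (4) with Proposition \ref{trans3}, whose correction term $-q_j^{-1}\varrho\circ(\genK_k-\genK_j)\circ\genE_i$ should be exactly the $\mathcal F_2$-contribution.
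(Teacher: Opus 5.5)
Your proposal is correct and follows the paper's argument. The Remark before the theorem shows that each starred word evaluates to $E_I^{\circ}+o(E_I^{\circ})$, which equals $E_I$ by Theorem \ref{formula}, so each starred relation is exactly the image of the corresponding Serre relation of Proposition \ref{P:SerreRelations}; your linear evaluation map on the free algebra makes rigorous what the paper leaves implicit, and the classification step only explains which subdiagrams are listed, so the proof does not depend on it.
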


We also obtain more Serre relations for the algebra $({\hat{\mathbf{U}}}_q^{\geq0})_\tau^{ \imath}$ beyond basic types.
\begin{prp}
If the given subdiagram occurs, then the following relations hold in the algebra $({\hat{\mathbf{U}}}_q^{\geq0})_\tau^{ \imath}$.

\begin{enumerate}
\item For $i\in \mathbb{I}_\zero\cup \mathbb{I}_\niso$ and $i\neq j$,
$$\Ad_q \genE_i^{{\star} ^{1-a_{i,j}}}(\genE_j^{\star})=0.
$$
\item For 
$$\xy
(-10,0)*{\textup\yy};(0,0)*{\otimes}**\dir{=};(0,0)*{\otimes};(10,0)*{\odot}**\dir{-};(-5,0)*{<};
(-10,-4)*{\scriptstyle i};(0,-4)*{\scriptstyle j};(10,-4)*{\scriptstyle k};
\endxy$$
$$
\Ad_q \genE_j^{\star}\circ\Ad_{q}\genE_k^{\star}\circ \Ad_{q} \genE_j^{\star} (\genE_i^{\star})=0.
$$

\item For
$$\xy
(-10,0)*{\textup\fullmoon};(0,0)*{\otimes}**\dir{=};(0,0)*{\otimes};(10,0)*{\otimes}**\dir{-};(-5,0)*{>};
(-10,-4)*{\scriptstyle i};(0,-4)*{\scriptstyle j};(10,-4)*{\scriptstyle k};
\endxy$$
$$\Ad_q \genE_j^{\star}\circ \Ad_q (\Ad_q \genE_j^{\star}(\genE_k^{\star}))\circ \Ad_q \genE_i^{\star}\circ \Ad_q \genE_j^{\star} (\genE_k^{\star})=0.$$
\item For
$$\xy
(-10,0)*{\odot};(0,0)*{\textup\fullmoon}**\dir{-};(0,0)*{\textup\fullmoon};(10,0)*{\otimes}**\dir{-};(15,0)*{<};(10,0)*{\otimes};(20,0)*{\textup\fullmoon}**\dir{=};
(-10,-4)*{\scriptstyle i};(0,-4)*{\scriptstyle j};(10,-4)*{\scriptstyle k};(20,-4)*{\scriptstyle l};
\endxy$$
$$\Ad_q \genE_k^{\star}\circ \Ad_q \genE_j^{\star} \circ \Ad_q \genE_k^{\star}\circ \Ad_q \genE_l^{\star}\circ \Ad_q \genE_k^{\star}\circ \Ad_q \genE_j^{\star} (\genE_i^{\star})=0.$$
\end{enumerate}
\end{prp}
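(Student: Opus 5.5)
The plan is to transport each relation from the Borel quantum supergroup ${\hat{\mathbf{U}}}_q^{\geq0}$ into $({\hat{\mathbf{U}}}_q^{\geq0})_\tau^{\imath}$ using the conversion formula of Theorem~\ref{formula}, repackaged through the formal symbols $\mathfrak{e}_i$. This is exactly how Theorem~\ref{star} was obtained. The four relations in the statement are the Serre relations (N-Iso), (AB) for the diagram with the double edge, (CD1) and (CD2) of Proposition~\ref{P:SerreRelations}, in the non-basic (affine/Kac--Moody) setting of \cite{Ya99}. We already know from Lemma~\ref{Delta} and Proposition~\ref{pairing} that these elements vanish in ${\hat{\mathbf{U}}}_q^{\geq0}$ and behave well under $\Delta$ and the pairing.

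\textbf{Step 1: expand each relation as a sum of monomials.} For each relation $\theta$, expand the iterated $q$-commutator $\ad_q$ as a linear combination $\theta=\sum_I c_I E_I$ of monomials $E_I=\genE_{i_1}\cdots\genE_{i_l}$, all of the same degree $\beta=\deg\theta$. Expanding $\Ad_q$ on the $\genE^\star$'s in the same formal way, with the same scalars $(-1)^{p(u)p(v)}q^{(\al,\beta)}$, gives the formal expression
\[
\sum_I c_I\,(\genE_{i_1}+\mathfrak{e}_{i_1})\circ\cdots\circ(\genE_{i_l}+\mathfrak{e}_{i_l}).
\]
The coefficients agree because both commutators depend only on degrees and parities, and $\mathfrak{e}_i$ is assigned the same degree and parity as $\genE_i$.

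\textbf{Step 2: evaluate via the matching rule.} By the Remark preceding Theorem~\ref{star}, each product $(\genE_{i_1}+\mathfrak{e}_{i_1})\circ\cdots\circ(\genE_{i_l}+\mathfrak{e}_{i_l})$ evaluates under the matching rule \eqref{evaluate} to $E_I^\circ+o(E_I^\circ)$. By Theorem~\ref{formula}, this equals $E_I$ as an element of the common underlying vector space. Hence the $\star$-expression evaluates to $\sum_I c_IE_I=\theta$, which is $0$ in ${\hat{\mathbf{U}}}_q^{\geq0}$ and therefore $0$ in $({\hat{\mathbf{U}}}_q^{\geq0})_\tau^\imath$, since the two algebras share the same vector space.

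\textbf{Step 3: check linearity of the formal calculus.} I would verify that the evaluation is linear and compatible with the formal expansion of nested $\Ad_q$. This requires that terms with an odd number of $\mathfrak{e}$'s are discarded, and that $\varphi(x,y)$ depends only on the positions within a fixed monomial $I$, never on how that monomial arose inside a nested commutator.

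\textbf{Main obstacle.} The delicate point is this consistency in Step 3. An inner commutator such as $\Ad_q(\Ad_q\genE_j^\star(\genE_k^\star))$ in (CD1) must be expanded completely into flat words before any matching is applied. Matchings across different commutator layers must be allowed, and the definition of $\mathcal{M}_s(S)$ on the full word does permit this. I would state explicitly that the $\star$-notation means: first expand into words, then evaluate each word by \eqref{evaluate}. With that convention fixed, the proposition reduces to Theorem~\ref{formula} together with the vanishing of the corresponding Serre elements in ${\hat{\mathbf{U}}}_q^{\geq0}$.

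A secondary check concerns case (1) when $a_{i,j}\le -3$. The hypothesis on $\tau$ from Section~\ref{realization} should not be needed here, because Theorem~\ref{formula} holds for every $\tau$ in Inv$(A)$.
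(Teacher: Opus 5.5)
Your proposal is correct and is essentially the paper's argument. The paper gives no separate proof of this proposition: the relations come from expanding the Serre relations of Proposition \ref{P:SerreRelations} (and \cite{Ya99} for the non-basic cases) into words and converting each word $E_I$ by Theorem \ref{formula}, repackaged through the $\genE_i^{\star}$ matching-rule calculus of the Remark preceding Theorem \ref{star}, exactly as you do, and your convention of flattening nested commutators before evaluating each word (since $\varphi(x,y)$ depends on the whole word) is what makes that notation well defined.
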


\begin{rmk}
While this manuscript was in its final stages of preparation,  the preprint \cite{CS26} appeared, addressing closely related questions. In \cite{CS26}, Serre relations for iquantum supergroups of type $A$ were obtained independently. 
Their approach uses projection techniques to compute the remainder terms for
all type-\(A\) super Satake diagrams, including those with black nodes and
thus the non-quasi-split cases, whereas our approach derives the relations
from an iHopf-algebra realization and applies uniformly across all
basic Lie superalgebra types.
     
\end{rmk}

\vspace{2mm}
\noindent{\bf Acknowledgments} 
The authors would like to thank Ming Lu, Yaolong Shen, Weiqiang Wang and Weinan Zhang for helpful discussions.
J. Chen is partially supported by
the National Natural Science Foundation of China (No. 12601077) and 
the National Natural Science Foundation of China Tianyuan Fund for Mathematics (No. 12526562). 
S. Ruan is partially supported by
Fundamental Research Funds for Central Universities of China (No. 20720250059), Fujian Provincial Natural Science Foundation of China (No. 2024J010006) and
the National Natural Science Foundation of China (No. 12671051).  
H. Zhu is partially supported by 
the National Natural Science Foundation of China (No. 12371040).


\end{document}